\newcommand{\e}{\epsilon}
\renewcommand{\l}{\lambda}
\newcommand{\z}{\zeta}
\newcommand{\T}{\Theta}
\newcommand{\oq}{\ {\raise 7pt\hbox{${\scriptstyle\circ}$}}
\kern -7pt{
\hbox{$Q$}}}
\newcommand{\R}{ \mathbb R}
\newcommand{\N}{ \mathbb N}
\newcommand{\Rd}{ \mathbb R^d}
\newcommand {\GD}{\mathfrak D}
\newcommand {\GH}{\mathfrak H}
\newcommand {\GW}{\mathfrak W}
\newcommand {\GV}{\mathfrak V}
\newcommand {\GU}{\mathfrak U}
\newcommand {\ba}{\mathbf a}
\newcommand {\bb}{\mathbf b}
\newcommand {\BB}{\mathbf B}
\newcommand {\BD}{\mathbf D}
\newcommand {\BG}{\mathbf G}
\newcommand {\BF}{\mathbf F}
\newcommand {\BM}{\mathbf M}
\newcommand {\bx}{\mathbf x}
\newcommand {\be}{\mathbf e}
\newcommand {\bk}{\mathbf k}
\newcommand {\bm}{\mathbf m}
\newcommand {\bt}{\mathbf t}
\newcommand {\bn}{\mathbf n}
\newcommand {\bnu}{\boldsymbol\nu}
\newcommand {\bal}{\boldsymbol\alpha}
\newcommand {\bmu}{\boldsymbol\mu}
\newcommand {\bth}{\boldsymbol\theta}
\newcommand {\boldeta}{\boldsymbol\eta}
\newcommand {\bxi}{\boldsymbol\xi}
\newcommand {\BUps}{\boldsymbol\Upsilon}
\newcommand {\BPsi}{\boldsymbol\Psi}
\newcommand{\lu}{\langle}
\newcommand{\ru}{\rangle}
\newcommand{\CV}{\mathcal V}
\newcommand{\CB}{\mathcal B}
\newcommand{\CX}{\mathcal X}
\newcommand{\CH}{\mathcal H}
\newcommand{\CP}{\mathcal P}
\newcommand{\CA}{\mathcal A}
\newcommand{\CC}{\mathcal C}
\newcommand{\CD}{\mathcal D}
\newcommand{\1}
{{\,\vrule depth3pt height9pt}{\vrule depth3pt height9pt}
{\vrule depth3pt height9pt}{\vrule depth3pt height9pt}\,}
\DeclareMathOperator \volume {{vol}}
\newtheorem{thm}{Theorem}[section]
\newtheorem{cor}[thm]{Corollary}
\newtheorem{lem}[thm]{Lemma}
\theoremstyle{definition}
\theoremstyle{remark}
\newtheorem{rem}[thm]{Remark}
\numberwithin{equation}{section}
\newcommand{\bee}{\begin{equation}}
\newcommand{\ene}{\end{equation}}
\newcommand{\bees}{\begin{equation*}}
\newcommand{\enes}{\end{equation*}}
\newcommand{\bes}{\begin{split}}
\newcommand{\ens}{\end{split}}
\newcommand{\bet}{\begin{thm}}
\newcommand{\ent}{\end{thm}}
\newcommand{\bel}{\begin{lem}}
\newcommand{\enl}{\end{lem}}
\newcommand{\bec}{\begin{cor}}
\newcommand{\enc}{\end{cor}}
\newcommand{\bep}{\begin{proof}}
\newcommand{\enp}{\end{proof}}
\newcommand{\ber}{\begin{rem}}
\newcommand{\enr}{\end{rem}}
\newcommand{\ep}{\varepsilon}
\newcommand{\la}{\lambda}
\newcommand{\de}{\delta}
\newcommand{\al}{\alpha}
\newcommand{\Z}{\mathbb Z}
\newcommand{\Ga}{\Gamma}
\newcommand{\De}{\Delta}
\def\square{\RIfM@\bgroup\else$\bgroup\aftergroup$\fi
  \vcenter{\hrule\hbox{\vrule\@height.6em\kern.6em\vrule}\hrule}\egroup}
\begin{document}

\title[Bethe-Sommerfeld conjecture
(\the\day.\the\month.\the\year)]
{Bethe-Sommerfeld conjecture}
\author[L.Parnovski (\the\day.\the\month.\the\year)]{Leonid Parnovski}
\address{Department of Mathematics\\ University College London\\
Gower Street\\ London\\ WC1E 6BT UK}
\email{Leonid@math.ucl.ac.uk}
\begin{abstract}
We consider Schr\"odinger operator $-\Delta+V$ in $\R^d$ ($d\ge 2$) with smooth
periodic potential $V$ and prove that there are only finitely many gaps in its spectrum.
\end{abstract}

\maketitle

\

\centerline{Dedicated to the memory of B.M.Levitan}

\

\section{Introduction}
This paper is devoted to proving the Bethe-Sommerfeld conjecture which
states that number of gaps in the spectrum of a Schr\"odinger operator
\bee\label{Schroedinger}
-\De+V(\bx), \qquad \bx\in\R^d
\ene
with a 
periodic potential $V$
is finite whenever $d\ge 2$. We prove the conjecture for smooth potentials
in all dimensions greater than one and for arbitrary lattices of periods.   
The conjecture so far was
proved by V.Popov and M.Skriganov \cite{PS} (see also \cite{Skr0}) in dimension $2$,
by M.Skriganov \cite{Skr1}, \cite{Skr2} in dimension $3$,
and by B.Helffer and A.Mohamed  \cite{HelMoh} in dimension $4$; M.Skriganov \cite{Skr1}
has also shown the conjecture to hold in arbitrary dimension under the assumption
that the lattice of periods is rational. In the case $d=3$ the conjecture was proved 
in \cite{Kar} for non-smooth or even singular potentials (admitting Coulomb and even stronger
singularities). An interesting
approach to proving the conjecture was presented by O.A.Veliev in \cite{Vel}.

There is a number of problems closely related to the Bethe-Sommerfeld conjecture on which
extensive work has been done; the relevant publications include,
but are by no means restricted to, \cite{DahTru}, \cite{Kar} (and references therein), \cite{PS1},
\cite{PS2}. Methods used to tackle these problems range from number theory
(\cite{Skr1}, \cite{Skr2}, \cite{PS1}, \cite{PS2}) to microlocal analysis
in \cite{HelMoh} and perturbation theory in \cite{Kar}, \cite{Vel0} and \cite{Vel}. The approach used in the present paper consists, mostly,
of perturbation theoretical arguments with a bit of geometry and geometrical
combinatorics thrown in at the end.

There are certain parallels between the approach of our paper and the approach used in \cite{Vel}.
In particular, there are several important intermediate results in our paper and in \cite{Vel}
which look rather similar to each other. Examples of such similarities are: precise asymptotic
formulae for eigenvalues in the non-resonance regions and some, although not very precise, formulae in the
resonance regions; proving that the eigenvalue is simple when we move the dual parameter $\bxi$ along
a certain short interval, and, finally, the use of geometrical combinatorics.
However, here the similarities
end, because the detailed methods used on each step are completely different. For example, paper \cite{Vel}
makes a heavy use of the asymptotic formulae for the eigenfunctions, whereas in our paper they are not needed.
On the other hand, we prove that each eigenvalue close to $\la$ is described by exactly one asymptotic formula
(i.e. the mapping $f$ constructed in our paper is a bijection in a certain sense), and this plays an essential
role in our proof, but in \cite{Vel}
this property is not required at all. In \cite{Vel} a very important role is played by the isoenergetic surface,
whereas we don't need it. This list can be continued, but it is probably better to stop here and state
once again: the methods of \cite{Vel} and our paper are different, despite the similarity of some intermediate
results. It is also worthwhile to mention that asymptotic expressions for eigenfunctions as well as
asymptotic formulae for isoenergetic surfaces were obtained by Yu.Karpeshina (see for example \cite{Kar}).



In many of the papers mentioned above, proving the conjecture in special cases comes together with obtaining lower bounds for
either of the functions describing the band structure of the spectrum:
the multiplicity of overlapping $m(\la)$ and the overlapping function
$\zeta(\la)$ (we will give a definition of these functions in the next
section). For example, in dimensions $d=2,3,4$ it has been proved in
\cite{Skr1}, \cite{Skr2}, \cite{DahTru}, and \cite{PS2} that for large $\la$ we have
\bees
m(\la)\gg \la^{\frac{d-1}{4}}
\enes
and
\bee\label{estimate:zeta1}
\zeta(\la)\gg  \la^{\frac{3-d}{4}};
\ene
however, these estimates do not seem
likely to hold in high dimensions. The estimates of the present paper are rather
weaker, but they hold in all dimensions. Unfortunately, our approach does not allow to
say anything
stronger than $m(\la)\ge 1$ for large $\la$ (this inequality is equivalent to the finiteness
of the number of spectral gaps). However, it is possible to give a nontrivial lower bound for the
overlapping function: we will show that in all dimensions for sufficiently large $\la$
\bee\label{estimate:zeta2}
\zeta(\la)\gg  \la^{\frac{1-d}{2}}.
\ene

The rest of the introduction is devoted to the informal discussion of the proof.
Since the proof of the main theorem \ref{maintheorem2} is rather complicated and
technically involved, the major ideas are outlined here.

After an affine change of coordinates, we can re-write our operator
\eqref{Schroedinger} as
\bee\label{Schroedinger1}
H=H_0+V(\bx), \qquad H_0=\BD \BG\BD,
\ene
with the potential $V$ being smooth and periodic with the lattice of periods of $V$ equal
$(2\pi\Z)^d$
($\BD=i\nabla$ and $\BG=\BF^2$ is $d\times d$ positive matrix, where the matrix $\BF$ is
also assumed to be positive).
Without loss of generality, we assume that the average of the potential $V$ over the
cell $[0,2\pi]^d$
is zero (otherwise we simply subtract this average from the potential, which will
just shift the spectrum of the problem). Let us fix a sufficiently large
value of spectral parameter $\lambda=\rho^2$; we will prove that $\la$ is inside
the spectrum of $H$.

The first step of the proof, as usual, consists in performing the Floquet-Bloch
decomposition to our operator \eqref{Schroedinger1}:
\bee\label{directintegral}
H=\int_{\oplus}H(\bk)d\bk,
\ene
where $H(\bk)=H_0+V(\bx)$
is the family of `twisted' operators with the same symbol as $H$ acting in
$L^2(T^d)$ where $T^d:=\{\bx\in\R^d,\,|x_j|\le\pi,1\le j\le d\}$.
The domain $\GD(\bk)$ of $H(\bk)$ consists of functions $f\in H^2(T^d)$
satisfying the boundary conditions $f\bigm|_{x_j=\pi}=e^{i2\pi k_j}f\bigm|_{x_j=-\pi}$,
$\frac{\partial f}{\partial x_j}\bigm|_{x_j=\pi}=e^{i2\pi k_j}\frac{\partial f}{\partial x_j}\bigm|_{x_j=-\pi}$.
These auxiliary operators are labelled by
the quasi-momentum $\bk\in\R^d/\Z^d$;
see \cite{RS} for more details about this decomposition.
The next step is to assume that the potential $V$
is a finite trigonometric polynomial whose Fourier coefficients
$\hat V(\bm)$ vanish when $|\bm|>R$.
The justification of the fact
that it is enough to prove the conjecture in this case is not too difficult once we keep
careful control of the dependence of all the estimates on $R$.
The main part of the argument consists of finding an asymptotic formula for all
sufficiently large eigenvalues
of all operators $H(\bk)$, with an arbitrarily small power of the energy in
the remainder estimate. In order to be able to write such a formula, however, we have
to abandon the traditional way of labelling eigenvalues of each $H(\bk)$ in the
non-decreasing order. Instead, we will label eigenvalues by means of the integer vectors
$\bn\in\Z^d$. Consider, for example, the unperturbed operator $H_0(\bk)$. Its
eigenfunctions and eigenvalues are
\bees
\{e^{i(\bn+\bk)\bx}\}_{\bn\in\Z^d}
\enes
and
\bee\label{unpeig}
\{|\BF(\bn+\bk)|^2\}_{\bn\in\Z^d}
\ene
correspondingly. However, despite our precise knowledge
of eigenvalues, it is extremely difficult to write them in increasing order or, indeed,
even to derive the one-term asymptotic formula for the $j$-th eigenvalue with the
precise remainder estimate. It is rather convenient to introduce one parameter
which takes care of both the quasi-momentum $\bk$ and the integer vector $\bn$ which
labels eigenvalues in \eqref{unpeig}. We denote $\bxi:=\bn+\bk$ (notation indicates
that $\bxi$ can be thought of as being a dual variable)
so that $\bn=[\bxi]$ and $\bk=\{\bxi\}$ (integer and
fractional parts, respectively). Then we can reformulate formula \eqref{unpeig} for the
unperturbed eigenvalues as follows: there is a mapping $f:\R^d\to\R$, given by the
formula $f(\bxi)=|\BF(\bn+\bk)|^2$ such that for each $\bk$ the restriction of $f$
to $\{\bxi\in\R^d:\, \{\bxi\}=\bk\}$ is a bijection onto the set of all eigenvalues of $H(\bk)$ (counting
multiplicities). We want to give an analogue of this (trivial) statement in the
general case. Let us  
define the spherical layer
\bees
\CA:=\{\bxi\in{\mathbb R}^d,\, \bigm||\BF\bxi|^2-\la\bigm|\le 40v \}
\enes
($v$ is the $L_\infty$-norm of $V$).
Let $N\in\N$ be a fixed number. We will construct two mappings
$f,g:\CA\to{\mathbb R}$
which satisfy the following properties:

(I) for each $\bk$ the first mapping $f$ is an injection from
the set of all $\bxi$ with fractional part equal to $\bk$ into the
spectrum of $H(\bk)$ (counting multiplicities). Each eigenvalue of
$H(\bk)$ inside $J:=[\la-20v,\la+20v]$ has a pre-image $\bxi\in\CA$ with $\{\bxi\}=\bk$.
The perturbation
inequality $|f(\bxi)-|\BF\bxi|^2|\le 2v$ holds for all $\bxi\in\CA$.

(II) for $\bxi\in\CA$ satisfying $|\BF\bxi|^2\in J$ we have: $|f(\bxi)-g(\bxi)|<\rho^{-N}$;

(III) one can divide the domain of $g$ in two parts: $\CA=\CB\cup\CD$
(both $\CB$ and
$\CD$ are intersections of $\CA$ with some cones centered at the origin -- at least modulo
very small sets) such that $g(\bxi)$ is given by an explicit formula when $\bxi\in\CB$,
we have some control over $g(\bxi)$ when $\bxi\in\CD$, and the ratio of volumes of
$\CB$ and $\CD$ goes to infinity when $\rho\to\infty$.

The set $\CB$, called the non-resonance set, contains, among others,
all points $\bxi\in\CA$ which
satisfy the inequality
\bee\label{resonanceset}
|\lu \bxi,\BG\bth\ru|\ge \rho^{1/3}|\BF\bth|
\ene
for all non-zero integer vectors $\bth$ with $|\bth|\ll RN$.
The precise formula for $g$ will imply, in particular, that when $\bxi\in\CB$ we have
$g(\bxi)=|\BF\bxi|^2+G(\bxi)$ with all partial derivatives of $G$ being $O(\rho^{-\epsilon})$
for some $\epsilon>0$.
When $\bxi$ belongs to the resonance set $\CD$, we can give good estimates
only of the partial derivative
of $g$ along one direction; this direction has a small angle with the direction of $\bxi$.
The behaviour of $g$ along all other directions is much worse. Indeed,
by considering potentials $V$ which allow to perform the separation of variables,
one can see that the function $g$ can not, in general, be made even continuous in the resonance set.
However, we still have some (although rather weak) control over the behaviour
of $g$ along all directions inside the resonance set; see lemma \ref{newpartialofs} for the precise
formulation of these properties.

One should mention that asymptotic formulae of non-resonance eigenvalues (i.e. the function
$g(\bxi)$ for $\bxi\in\CB$ in our notation) and some resonance eigenvalues
were obtained before in certain cases, using completely different methods,
by O. A. Veliev, \cite{Vel0} and \cite{Vel} and Y. E. Karpeshina (see \cite{Kar0}, \cite{Kar} and references therein).
However,
as has been already mentioned, there are certain distinctions between the settings of \cite{Vel} and \cite{Kar}
and the settings of our paper. Because of this, and in order to make our paper self-contained, it seems sensible
to include an independent proof of the asymptotic formula for eigenvalues.

Before describing how to construct these mappings, we explain first how to prove the
Bethe-Sommerfeld conjecture using them. Put $\de=\rho^{-N}$. For each
$\boldeta\in\R^d$ of unit length we denote $I_{\boldeta}$ the interval consisting of points
$\bxi=t\boldeta$, $t>0$ satisfying $g(\bxi)\in[\la-\de,\la+\de]$; we will consider only
vectors $\boldeta$ for which $I_{\boldeta}\in\CB$.
Suppose we have found an interval $I_{\boldeta}$
on which the mapping $f$ is continuous. Then property (II) above together with the
intermediate value theorem would imply that there is a point $\bxi(\la)\in I_{\boldeta}$
satisfying $f(\bxi(\la))=\la$, which would mean that $\la$ is in the spectrum of $H$.
Thus, if we assume that $\la$ belongs to the spectral gap, this would imply that the mapping
$f$ is not continuous on each of the intervals $I_{\boldeta}$. A simple argument shows
that in this case for each point $\bxi\in\CB$ with $|g(\bxi)-\la|\le \de$ there exists another
point $\bxi_1\ne\bxi$ with $\bxi-\bxi_1\in\Z^d$ and $|g(\bxi_1)-\la|\ll \de$.
The existence of such a point $\bxi_1$ (which we call a conjugate point to $\bxi$)
is a crucial part of the proof;
it seems that similar arguments based on the existence of conjugate points
could be helpful in analogous problems. Afterwards, a
geometrical combinatorics argument shows that for sufficiently small $\de$
($\de\asymp\rho^{1-d}$ would do) some (moreover, most) of the points
$\bxi\in\CB\cap g^{-1}\bigl([\la-\de,\la+\de]\bigr)$ have no conjugate points;
the important part in the proof is played by the fact that the
surface $g^{-1}(\la)\cap\CB$ has positive curvature in each direction.

Now let us discuss how to construct mappings $f$ and $g$ with properties described above.
This is done in several steps. First, we prove lemma \ref{perturbation2}
which states that under certain conditions it is possible
instead of studying eigenvalues of the operator $H=H_0+V$, to study eigenvalues of the operator
\bee\label{project:0}
\sum_jP^jHP^j,
\ene
where $P^j$ are spectral projections of $H_0$; the error of this approximation
is small. This result can be applied to the operators $H(\bk)$ from the direct
integral \eqref{directintegral}. We want therefore to study the spectrum of the (direct) sum
\eqref{project:0}
where $P^j$ are projections `localized' in some domains of the $\bxi$-space.
The geometrical structure of these projections will depend on whether the localization happens inside
or outside the resonance regions.
The case of a projection
$P^j$ `localized' around a point $\bxi\in\CB$ is relatively simple: the rank of such projections does
not depend on $\rho$ or the `localization point' $\bxi$.
Thus, in this case we will need to compute the eigenvalue of the finite matrix
$P^jH(\bk)P^j$. This can be done by computing the characteristic polynomial of this matrix
and then using the iterative process based on the Banach fixed point theorem to find
the root of this characteristic polynomial. It is much more difficult to construct projections
$P^j$ corresponding to the points $\bxi$ located inside the resonance set $\CD$. The form of
projections will depend on, loosely speaking, how many linearly independent integer
vectors $\bth$ for which
\eqref{resonanceset} is not satisfied are there. The construction of such projections is the most
technically difficult part of the paper. Once these projections are constructed, it turns
out that the eigenvalues of $P^jHP^j$ with large $\rho$
can be easily expressed in terms of the eigenvalues of
the operator pencil $rA+B$ where $A$ and $B$ are fixed and $r\sim\rho$ is a large parameter.
The rest is a relatively simple perturbation theory.

The approach used in this paper can be applied to various related problems. For example,
it seems possible to obtain several new terms of the asymptotics of the integrated density
of states using these methods. It might even be possible to obtain the complete asymptotic
formula; however, this would require much more careful analysis of the mapping $g$ in the
resonance set. As an immediate `free' corollary of our results we obtain the theorem
\ref{clusters} which seems too be new. Loosely speaking, it states that there are no
`simultaneous clusters' of eigenvalues of all operators $H(\bk)$.

The approach of this paper works almost without changes for the polyharmonic operators
$(-\De)^l+V$ with a smooth periodic potential $V$.
Another possible field of applications of the results of this paper is studying the structure
of the (complex) Bloch and Fermi varieties.

The rest of the paper is constructed as follows: section 2 has all necessary preliminaries;
also, in this section for the convenience of the reader we,
taking into account the size of the paper, give references to the definitions of all major
objects in the paper. Section 3 proves the abstract result allowing to reduce
computation of the
spectrum of $H=H_0+V$ to the computation of the spectrum of $\sum_jP^jHP^j$,
$P^j$ being the spectral projections of $H_0$. Section 4 proves various
estimates of angles between lattice points which are needed to keep track on the dependence
of all results on $R$ -- the size of the support of the potential. In section 5 we apply
the abstract lemma from section 3 to our case and perform the reduction of $H$ to the sum
of simpler operators. In section 6 we compute the eigenvalues of these simpler operators
corresponding to the non-resonance set; we also give the formula for $g(\bxi)$ when
$\bxi\in\CB$. Section 7 is devoted to the study of the properties of these simpler operators
and the mapping $g$ restricted to the
resonance set $\CD$. Finally, in section 8 we prove the Bethe-Sommerfeld conjecture.

When this manuscript was ready, I have learned that another article of Veliev \cite{Vel1} was published recently.

{\bf Acknowledgement} First and foremost, I am deeply grateful to Alex Sobolev.
I was introduced to periodic problems by working jointly with him,
and our numerous conversations and discussions
resulted in much better understanding of this subject by me (and, I do hope,
by him as well). He has read the preliminary version of this manuscript and made
essential comments. Thanks also go to Keith Ball who made several important suggestions
which have substantially simplified proofs of the statements from section 4.
I am also immensely grateful to Gerassimos Barbatis, Yulia Karpeshina, Michael Levitin, and Roman Shterenberg for reading the preliminary version of this manuscript and making very useful comments and also for helping me
to prepare the final version of this text.

\section{Preliminaries}

We study the Schr\"odinger operator
\bee\label{Schroedinger11}
H=H_0+V(\bx), \qquad H_0=\BD \BG\BD,
\ene
with the potential $V$ being infinitely smooth and periodic with the lattice of periods equal
$(2\pi\Z)^d$. Here, $\BD=i\nabla$, and $\BG=\BF^2$ is $d\times d$ positive matrix; $\BF$ is
also taken to be positive.

Throughout the paper we use the following notation. If $A$ is a
bounded below self-adjoint operator with compact resolvent, then
we denote by $\{\mu_j(A)\}$ ($j=1,2,\dots$) the set of eigenvalues
of $A$ written in non-decreasing order, counting multiplicities.

As we have already mentioned, the spectrum of $H$ is the union
over $\bk\in\R^d/\Z^d$ of the spectra of the operators $H(\bk)$, the domain of each
$H(\bk)$ is $\GD(\bk)$ and
$H(\bk):=\BD \BG\BD+V(\bx)$.
By $\GH:=L^2(T^d)$ we denote the Hilbert space in which all the operators
$H(\bk)$ act.
We also denote by $H_0(\bk)$ the operator $\BD \BG\BD$ with the domain $\GD(\bk)$.
Let $\la_j(\bk)=\mu_j(H(\bk))$ be the $j$th eigenvalue of $H(\bk)$.
Then it is well-known (see, for example, \cite{RS})
that each function $\la_j(\cdot)$ is continuous
and piecewise smooth. Denote by $\ell_j$ the image of $\la_j(\cdot)$. Then
$\ell_j$ is called the $j$th {\it spectral band}. We also define, for each
$\la\in\R$, the following functions:
\bees
m(\la)=\#\{j:\,\la\in \ell_j\}
\enes
is the {\it multiplicity of overlapping} ($\#$ denotes the number of elements in
a set). The {\it overlapping function} $\zeta(\la)$ is
defined as the maximal number $t$ such that the
symmetric interval
$[\l - t, \l + t]$ is entirely
contained in one of the bands $\ell_j$:
\begin{equation*}
\z(\l) = \max_j\max \{ t\in\R :\, [ \l - t, \l + t]\subset \ell_j\}.
\enes
Finally,
\bee\label{densityofstates}
N(\la)=\int_{[0,1]^d}\# \{j:\,\la_j(\bk)<\la\}d\bk
\ene
is the {\it integrated density of states} of the operator \eqref{Schroedinger1}.
For technical reasons sometimes
it will be convenient to assume that the dimension $d$ is at least
$3$ (in the $2$-dimensional case the conjecture has been proved already, so this assumption
does not restrict generality).
The main result of the paper is the following:
\bet\label{maintheorem2} Let $d\ge 3$.
Then all sufficiently large points $\la=\rho^2$ are
inside the spectrum of $H$. Moreover, there exists a positive constant
$Z$ such that
for large enough $\rho$ the whole interval $[\rho^2-Z\rho^{1-d},\rho^2+Z\rho^{1-d}]$ lies
inside some spectral band.
\ent

Without loss of generality we always assume that $\int_{[0,2\pi]^d}V(\bx)d\bx=0$.
Abusing the notation slightly, we will denote by $V$ both the potential itself and the
operator of multiplication by $V$.

By 
$B(R)$ we denote a ball 
of radius $R$ centered at the origin. By
$C$ or $c$ we denote positive constants, depending only on $d$,
$\BG$, and norms of the potential in various Sobolev spaces $H^s$.
In section 5 we will introduce parameters $p$, $q_j$ and $M$;
constants are allowed to depend on the values of these parameters as well.
The exact value of constants can be different each time they
occur in the text,
possibly even each time they occur in the same formula. 
On the other hand, the constants which are labelled (like $C_1$, $c_3$, etc)
have their values being fixed throughout the text.
Whenever we use $O$, $o$, $\gg$, $\ll$, or $\asymp$ notation, the
constants involved will also depend on $d$, $\BG$, $M$, and norms of
the potential; the same is also the case when we use the
expression `sufficiently large'. Given two positive functions $f$ and $g$,
we say that $f\gg g$, or $g\ll
f$, or $g=O(f)$ if the ratio $\frac{g}{f}$ is bounded. We say
$f\asymp g$ if $f\gg g$ and $f\ll g$.
By $\la=\rho^2$ we denote a point on the spectral axis.
We will always assume that $\la$ is sufficiently large.
We also denote by $v$ the $L_{\infty}$-norm of the potential $V$, and
$J:=[\la-20v,\la+20v]$. 
Finally,
\bee\label{Arho}
\CA:=\{\bxi\in{\mathbb R}^d,\, \,\bigm| |\BF\bxi|^2-\la\bigm|\le 40 v \}
\ene
and
\bee\label{Arho1intro}
\CA_1:=\{\bxi\in{\mathbb R}^d,\, \,\bigm| |\BF\bxi|^2-\la\bigm|\le 20 v \}.
\ene
Notice that the definition of $\CA$ obviously implies that if $\bxi\in\CA$, then
$\bigm| |\BF\bxi|-\rho\bigm|\ll\rho^{-1}$.

Given several vectors $\boldeta_1,\dots,\boldeta_n\in\R^d$, we denote by
$R(\boldeta_1,\dots,\boldeta_n)$ the linear subspace spanned by
these vectors, and by $Z(\boldeta_1,\dots,\boldeta_n)$ the lattice
generated by them (i.e. the set of all linear combinations of
$\boldeta_j$ with integer coefficients; we will use this
notation only when these vectors are linearly independent).
We denote by $\BM(\boldeta_1,\dots,\boldeta_n)$ the $d\times n$
matrix whose $j$th column equals $\boldeta_j$.
Given
any lattice $\Ga$, we denote by $|\Ga|$ the volume of the
cell of $\Ga$, so that if $\Ga=Z(\boldeta_1,\dots,\boldeta_d)$,
then $|\Ga|$ is the absolute value of the determinant of $\BM(\boldeta_1,\dots,\boldeta_d)$.
We also denote, for any linear space $\GV\subset\R^d$, 
$B(\GV;R):=\GV\cap B(R)$.
For any non-zero vector $\bxi\in\R^d$ we denote
$n(\bxi):=\frac{\bxi}{|\BF\bxi|}$. Any vector $\bxi\in\R^d$ can be
uniquely decomposed as $\bxi=\bn+\bk$ with $\bn\in\Z^d$ and
$\bk\in [0,1)^d$. We call $\bn=[\bxi]$ the integer part of $\bxi$ and
$\bk=\{\bxi\}$ the fractional part of $\bxi$.

Whenever $P$ is a projection and $A$ is an arbitrary operator
acting in a Hilbert space $\CH$, the expression $PAP$ means,
slightly abusing the notation, the operator $PAP:P\CH\to P\CH$.


Throughout the paper we use the following convention:
vectors are denoted by bold lowercase letters; matrices by bold uppercase letters;
sets (subsets of $\R^d$) by calligraphic uppercase letters; linear subspaces by gothic
uppercase letters. By $\volume(\CC)$ we denote the Lebesgue measure of the set $\CC$.
If $\CC_j\subset \R^d$, $j=1,2$ are two subsets of $\R^d$, their sum is defined in the
usual way:
\bees
\CC_1+\CC_2=\{\bxi\in\R^d:\,\bxi=\bxi_1+\bxi_2,\,\bxi_j\in\CC_j\}.
\enes

Finally, for the benefit of the reader we will list here either the
definitions of the major objects introduced later in the paper or references
to the formulas in which they are defined.

$f,g:\CA\to\R$ are mappings satisfying properties listed in theorem \ref{maintheorem1}
(if the Fourier transform of $V$ has compact support) and in corollary \ref{maincorollary1} for general
potentials. The sets $\T_j$ and $\T'_j$ are defined in \eqref{Tj}.
The projections
$\CP^{(\bk)}(\CC)$ are defined immediately before Lemma \ref{potential}. $\CV(n)$, $\bxi_\GV$,
$\bxi^{\perp}_\GV$, and $\T(\GV)$ are defined at the beginning of subsection 7.1.
The sets $\Xi(\GV)$ and $\Xi_j(\GV)$ ($j=0,\dots,3$)
are defined in formulas \eqref{defXi0}-\eqref{defXi}; the sets
$\BUps_j(\bxi)$, $\BUps(\bxi)$, $\BUps(\bxi_1;\bxi_2)$, and $\BUps(\bxi;U)$ are defined by
formulas \eqref{newBUpsj}, \eqref{newBUps}, \eqref{bupsxi12}, and \eqref{bupsxiu} correspondingly.
The numbers $p$ and $q_n$ are defined in \eqref{pq}, $K=\rho^p$ and
$L_n=\rho^{q_n}$.
The projection $P(\bxi)$ and the operator $H'(\bxi)$ are defined in \eqref{Pbxi} and \eqref{Hbxi} correspondingly.
The sets $\CB$ and $\CD$ are defined in \eqref{Brho} and \eqref{CD}.
$r(\bxi)$ and $\bxi'_\GV$ are defined by formula
\eqref{rbxi}. Operators $A$ and $B$ are defined by \eqref{operatorA} and \eqref{operatorB}.
Finally, the sets $\CA(\de)$, $\CB(\de)$, and $\CD(\de)$ are defined before Lemma
\ref{volumeCA5}.

\section{Reduction to invariant subspaces: general result}

The key tool in finding a good approximation of the eigenvalues of
$H(\bk)$ will be the following two lemmas.

\begin{lem}\label{perturbation1}
Let $H_0$, $V$ and $A$ be self-adjoint operators such that $H_0$
is bounded below and has compact resolvent, and $V$ and $A$ are
bounded. Put $H=H_0+V$ and $\hat H=H_0+V+A$ and denote by
$\mu_l=\mu_l(H)$ and $\hat\mu_l=\mu_l(\hat H)$ the sets of
eigenvalues of these operators. Let $\{P_j\}$ ($j=0,\dots,n$) be a
collection of orthogonal projections commuting with $H_0$ such
that $\sum P_j=I$, $P_jVP_k=0$ for $|j-k|>1$, and $A=P_nA$. Let
$l$ be a fixed number. Denote by $a_j$ the distance from $\mu_l$
to the spectrum of $P_jH_0P_j$
Assume that for $j\ge 1$ we have $a_j>4a$, where $a:=||V||+||A||$. Then
$|\hat\mu_l-\mu_l|\le 2^{2n}a^{2n+1}\prod_{j=1}^n(a_j-2a)^{-2}$.
\end{lem}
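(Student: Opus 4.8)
The plan is to isolate the eigenvalue $\mu_l$ by constructing an approximate spectral projection associated to it and using a resolvent/Neumann-series argument to control the perturbation $A$. First I would observe that because $A = P_n A = A P_n$ (self-adjointness), the perturbation only couples the top block, and because $P_j V P_k = 0$ for $|j-k|>1$ the operator $V$ is tridiagonal in the block decomposition induced by the $P_j$'s. The starting point is the standard fact that if $\mu$ is an eigenvalue of $H = H_0 + V$ with $\operatorname{dist}(\mu,\operatorname{spec}(H)\setminus\{\mu\})$ not too small, then for $z$ on a small circle $\gamma$ around $\mu$ of radius comparable to $a$, the resolvent $(H-z)^{-1}$ exists and is $O(1/a)$ in norm; the same holds for $\hat H = H + A$ provided $\|A\|$ is small compared to that distance. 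Then $\mu_l - \hat\mu_l$ can be read off from $\tfrac{1}{2\pi i}\oint_\gamma z\,\big[(H-z)^{-1} - (\hat H - z)^{-1}\big]\,dz$, so everything reduces to estimating $(H-z)^{-1} - (\hat H-z)^{-1} = (H-z)^{-1} A (\hat H - z)^{-1}$ and, more importantly, to getting a bound on $P_n (H-z)^{-1} P_n$ that is much better than $O(1/a)$: we must gain the factor $\prod_{j=1}^n (a_j - 2a)^{-2}$.

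The key step — and the one I expect to be the main obstacle — is precisely that gain. The mechanism is that to "reach" the $n$-th block from itself through the tridiagonal structure of $V$, a path in the resolvent identity must pass through blocks $P_{n-1}, P_{n-2},\dots$ and back, and at each intermediate block $P_j$ the free resolvent $(P_j H_0 P_j - z)^{-1}$ contributes a small factor $(a_j - 2a)^{-1}$ because $z$ is within $2a$ of $\mu_l$ while $\operatorname{spec}(P_j H_0 P_j)$ is at distance $a_j$. To make this rigorous I would expand $(H - z)^{-1}$ via the second resolvent identity around $H_0$, or better, solve the block system $(H-z)u = P_n \phi$ directly: writing $u_j = P_j u$, the equations $(P_j H_0 P_j - z) u_j + \sum_{|k-j|\le 1} P_j V P_k u_k = \delta_{jn}\phi$ form a block-tridiagonal system which one solves from the bottom ($j=0$) upward by Gaussian elimination / continued-fraction substitution, each elimination step being controlled by $\|(P_j H_0 P_j - z)^{-1}\| \le (a_j - 2a)^{-1}$ (after absorbing the $\|V\|$ corrections, which is where the hypothesis $a_j > 4a$ enters to keep all the Schur complements invertible with comparable bounds). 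Tracking the number of substitution steps and the powers of $a = \|V\| + \|A\|$ that accumulate yields the factor $2^{2n} a^{2n} \prod_{j=1}^n (a_j - 2a)^{-2}$ for $\|P_n(H-z)^{-1}P_n\|$, and multiplying by one more $\|A\| \le a$ and the length $\sim a$ of $\gamma$ (with the $\tfrac{1}{2\pi}$ and $|z|\le \mu_l + O(a)$ — here one uses that only the *difference* of resolvents appears, so the large factor $|z|$ cancels and effectively $|z - \mu_l| \lesssim a$ survives) gives the claimed bound $2^{2n} a^{2n+1}\prod_{j=1}^n(a_j - 2a)^{-2}$.

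A cleaner alternative I would keep in reserve: prove by downward induction on $m$ that the "partial" resolvent obtained by restricting to blocks $P_0,\dots,P_m$ satisfies a bound of the form $C_m a^{2m}\prod_{j\le m}(a_j-2a)^{-2}$ on its $P_m$–$P_m$ entry, the inductive step being a single Schur-complement computation using $P_m V P_{m-1}$, $P_{m-1}V P_m$ and the previously established bound on the $P_{m-1}$–$P_{m-1}$ entry; this avoids writing out the full continued fraction. Either way, the routine parts are the contour-integral setup and the bookkeeping of constants (the $2^{2n}$ absorbs the number of terms generated at each elimination step), while the conceptual heart is the observation that the tridiagonal structure forces every contribution of $A$ to $\hat\mu_l - \mu_l$ to traverse all $n$ intervening blocks, each costing a small factor.
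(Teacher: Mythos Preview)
Your core combinatorial idea --- that the tridiagonal block structure forces every contribution of $A$ to traverse all $n$ intermediate blocks, picking up a factor $(a_j-2a)^{-1}$ at each --- is exactly the right mechanism. But the contour-integral framework you build around it has a genuine gap: you assume that $\mu_l$ is separated from the rest of $\operatorname{spec}(H)$ by a gap of size comparable to $a$, so that a circle $\gamma$ of radius $\sim a$ around $\mu_l$ lies in the resolvent set. The hypotheses give no such isolation. The conditions $a_j>4a$ for $j\ge 1$ separate $\mu_l$ only from $\operatorname{spec}(P_jH_0P_j)$; the block $P_0H_0P_0$ may have spectrum arbitrarily close to $\mu_l$, and hence $H=H_0+V$ may have many eigenvalues clustered near $\mu_l$ (or $\mu_l$ may itself be degenerate). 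In that situation $(H-z)^{-1}$ need not exist on your contour, and even if you enlarge $\gamma$ to enclose a whole cluster, the trace of $\frac{1}{2\pi i}\oint_\gamma z\,[(H-z)^{-1}-(\hat H-z)^{-1}]\,dz$ compares only sums of eigenvalues, not the individual $\mu_l$ versus $\hat\mu_l$.

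The paper avoids this by working with the one-parameter family $H_t=H+tA$, $t\in[0,1]$, and the (piecewise valid) variational formula $\frac{d}{dt}\mu_l(H_t)=(A\phi(t),\phi(t))$, which holds for each individual ordered eigenvalue regardless of clustering or crossings. Since $A=P_nAP_n$, this derivative is $(A\Phi_n,\Phi_n)$ with $\Phi_n=P_n\phi(t)$, so the whole problem reduces to bounding $\|\Phi_n\|$. That bound is obtained directly from the eigenvalue equation: writing $\Phi_j=P_j\phi$, the block-tridiagonal system lets one solve for $\Phi_k$ in terms of $\Phi_{k-1}$ via $(P_k(H_0+V_{kk}+\delta_{kn}tA-\mu(t))P_k)^{-1}$, and backward induction gives $\|\Phi_k\|\le \frac{2a}{a_k-2a}\|\Phi_{k-1}\|$, hence $\|\Phi_n\|\le 2^na^n\prod_{j=1}^n(a_j-2a)^{-1}$. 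Squaring, multiplying by $\|A\|\le a$, and integrating in $t$ over $[0,1]$ gives the stated bound. This is essentially your Schur-complement induction, but carried out on a single eigenvector rather than on the full resolvent, which is what makes the argument insensitive to the spectral gap issue.
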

\begin{proof}
Let $H_t=H+tA$, $0\le t\le 1$ and let $\mu(t)=\mu_l(H(t))$ be the
corresponding family of eigenvalues. We also choose the family of corresponding
normalized eigenfunctions $\phi(t)=\phi_l(t)$. We will skip
writing the index $l$ in the rest of the proof. Elementary
perturbation theory (see, e.g., \cite{Kat}) implies that $\mu(t)$
is piecewise differentiable and
\bee\label{pert1}
\frac{d\mu(t)}{dt}=(A\phi(t),\phi(t)).
\end{equation}
Let $\Phi_j=\Phi_j(t):=P_j\phi(t)$, and let $V_{kj}:=P_kVP_j$ (so
that $V_{jk}=0$ if $|j-k|>1$). Then the eigenvalue equation for
$\phi(t)$ can be written in the following way:
\bee\label{pert2}
\bes H_0\Phi_0+V_{0\,0}\Phi_{0}+V_{0\,1}\Phi_{1}
&=\mu(t)\Phi_0\\
H_0\Phi_j+V_{j\,j-1}\Phi_{j-1}+V_{j\,j}\Phi_{j}+V_{j\,j+1}\Phi_{j+1}
&=\mu(t)\Phi_j,\,\,1\le j<n\\
H_0\Phi_n+V_{n\,n-1}\Phi_{n-1}+V_{n\,n}\Phi_{n}+t A\Phi_{n}
&=\mu(t)\Phi_n.
\end{split}
\end{equation}
Indeed, let us apply $P_k$ to both sides of equation
$(H_0+V+tA)\phi=\mu\phi$. We will obtain
\bee\label{add1}
H_0\Phi_k+P_kV\phi+tP_kA\phi=\mu\Phi_k.
\ene Now we use the
following identities:
\begin{equation}\label{add2}
P_kV\phi=P_kV\sum_jP_j\phi=\sum_{j,|j-k|\le 1}P_kVP_j\phi=
\sum_{j,|j-k|\le 1} V_{kj}\Phi_j
\end{equation}
and $A=P_nA=(P_nA)^*=AP_n=P_nAP_n$, so
\bee\label{add3}
P_kA\phi=\de_{k,n}A\Phi_n.
\ene
Identities
\eqref{add1}--\eqref{add3} imply \eqref{pert2}.

Let us now prove, using the backwards induction, that for all $k$,
$1\le k\le n$ and all $t\in [0,1]$ we have \bee\label{induction}
||\Phi_k(t)||\le \frac{2a}{a_k-2a}||\Phi_{k-1}(t)||.
\end{equation}
Indeed, from the last equation in \eqref{pert2} we see that
\bee\label{Phin} \Phi_n(t)=-[P_n(H_0+V_{n\,n}+t
A-\mu(t))P_n]^{-1}V_{n\,n-1}\Phi_{n-1}(t).
\end{equation}
Since $|\mu-\mu(t)|\le a$, the distance from $\mu(t)$ to the
spectrum of $P_nH_0P_n$ is at least $a_n-a$. Since $||V_{n\,n}+t A||\le a$,
this implies
\begin{equation*}
||[P_n(H_0+V_{n\,n}+t A-\mu(t))P_n]^{-1}||\le \frac{1}{a_n-2a}.
\end{equation*}
Thus, \eqref{Phin} implies  $||\Phi_n(t)||\le
\frac{a}{a_n-2a}||\Phi_{n-1}(t)||$, and \eqref{induction} holds
for $k=n$. Assume now that we have proved \eqref{induction} for
all $k$ between $j+1$ and $n$, $1\le j<n$. Then, analogously to
\eqref{Phin}, we have:
\bee
\Phi_j(t)=-[P_j(H_0+V_{j\,j}-\mu(t))P_j]^{-1}
[V_{j\,j-1}\Phi_{j-1}(t)+V_{j\,j+1}\Phi_{j+1}(t)],
\end{equation}
so
\bee
\bes ||\Phi_j(t)||&\le
\frac{a}{a_j-2a}(||\Phi_{j-1}(t)||+||\Phi_{j+1}(t)||)\\
&\le
\frac{a}{a_j-2a}||\Phi_{j-1}(t)||+\frac{2a^2}{(a_j-2a)(a_{j+1}-2a)}||\Phi_j(t)||,
\end{split}
\end{equation}
where we have used the validity of \eqref{induction} for $k=j+1$.
This shows that \eqref{induction} holds for $k=j$, since
\bees
\frac{2a^2}{(a_j-2a)(a_{j+1}-2a)}<\frac{1}{2}.
\enes

Using \eqref{induction} and the fact that $||\Phi_1||\le 1$, we
see that
\bees
||\Phi_n||\le
\frac{2^{n}a^{n}}{\prod_{j=1}^n(a_j-2a)}.
\enes
Since the RHS of
\eqref{pert1} equals
\begin{equation*}
(P_nAP_n\phi(t),\phi(t))= (A\Phi_n(t),\Phi_n(t)),
\end{equation*}
this finishes the proof
\end{proof}

Now we formulate the immediate corollary of lemma
\ref{perturbation1} which we will be using throughout.

\bel\label{perturbation2} Let $H_0$ and $V$ be self-adjoint
operators such that $H_0$ is bounded below and has compact
resolvent and $V$ is bounded. Let $\{P^m\}$ ($m=0,\dots,T$) be a
collection of orthogonal projections commuting with $H_0$ such
that if $m\ne n$ then $P^mP^n=P^mVP^n=0$. Denote $Q:=I-\sum P^m$.
Suppose that each $P^m$ is a further sum of orthogonal projections
commuting with $H_0$: $P^m=\sum_{j=0}^{j_m} P^m_j$ such that
$P_j^mVP_l^m=0$ for $|j-l|>1$ and $P_j^mVQ=0$ if $j<j_m$. Let
$v:=||V||$ and let us fix an interval $J=[\la_1,\la_2]$ on the
spectral axis which satisfies the following properties: spectra of
the operators $QH_0Q$ and $P_j^kH_0P_j^k$, $j\ge 1$ lie outside $J$;
moreover, the distance from the spectrum of $QH_0Q$ to $J$ is
greater than $6v$ and the distance from the spectrum of
$P_j^kH_0P_j^k$ ($j\ge 1$) to $J$, which we denote by $a_j^k$, is
greater than $16v$. Denote by $\mu_p\le\dots\le\mu_q$ all
eigenvalues of $H=H_0+V$ which are inside $J$. Then the
corresponding eigenvalues $\tilde\mu_p,\dots,\tilde\mu_q$ of the
operator
\bees
\tilde H:=\sum_m P^mHP^m+QH_0Q
\enes
are eigenvalues of
$\sum_m P^mHP^m$, and they satisfy
\bees
|\tilde\mu_r-\mu_r|\le
\max_m \left[(6v)^{2j_m+1}\prod_{j=1}^{j_m}(a_j^m-6v)^{-2}\right];
\enes
all other
eigenvalues of $\tilde H$ are outside the interval
$[\la_1+2v,\la_2-2v]$.

\enl

\bep
Assumptions of the lemma imply that
\bees
H=\tilde H+
(Q+\sum_m P_{j_m}^m)V(Q+\sum_m P_{j_m}^m)-(\sum_m
P_{j_m}^m)V(\sum_m P_{j_m}^m).
\end{equation*}
Therefore, $\tilde H-2v(Q+\sum_m P_{j_m}^m)\le H\le \tilde
H+2v(Q+\sum_m P_{j_m}^m)$, and the elementary perturbation theory
implies that for all $l$
\bee\label{1.7}
\mu_l(\tilde H-2v(Q+\sum_m P_{j_m}^m))\le \mu_l(H)
\le \mu_l(\tilde H+2v(Q+\sum_m P_{j_m}^m)).
\ene
The operators $\tilde H\pm 2v(Q+\sum_m P_{j_m}^m)$ split into
the sum of invariant operators $QH_0Q\pm 2vQ$ and $P^mHP^m\pm 2v
P_{j_m}^m$ ($m=0,\dots,n$). The spectrum of operators $QH_0Q\pm
2vQ$ is outside $[\la_1-4v,\la_2+4v]$ due to the assumptions of the
lemma. Therefore, since the shift of an eigenvalue is at most the
norm of the perturbation, for $p\le l\le q$, $\mu_l(\tilde H\pm
2v(Q+\sum_m P_{j_m}^m))$ is an eigenvalue of one of the operators
$P^mHP^m\pm 2v P_{j_m}^m$. If we now apply lemma \ref{perturbation1}
to each of the operators $P^mHP^m\pm 2v
P_{j_m}^m$ with $A:=\pm 2v P_{j_m}^m$ and $a=3v$, we will obtain
\bee\label{neweq:0}
\bes
|\mu_k(P^mHP^m\pm 2v
P_{j_m}^m)&-\mu_k(P^mHP^m)|\\
&\le 6^{2j_m+1}v^{2j_m+1}\max_m\prod_{j=1}^{j_m}(a_j^m-6v)^{-2}\\
&\le \max_m \left[(6v)^{2j_m+1}\prod_{j=1}^{j_m}(a_j^m-6v)^{-2}\right]=:\tau,
\end{split}
\ene
provided $\mu_k(P^mHP^m)\in [\la_1-4v,\la_2+4v]$. Let us now define the bijection $F$ mapping
the set of all eigenvalues of $\tilde H$ to the set of all eigenvalues of
$\{\mu_l(\tilde H+2v(Q+\sum_m P_{j_m}^m))\}$ (counting multiplicities) in the following way.
Suppose, $\mu$ is an eigenvalue of $\tilde H$. Then either $\mu=\mu_k(QH_0Q)$, or
$\mu=\mu_k(P^mHP^m)$ for some $k,m$. We define $F(\mu):=\mu_k(QH_0Q+2vQ)$ in the former case, and
$F(\mu):=\mu_k(P^mHP^m+2vP^m_{j_m})$ in the latter case. Then the mapping $F$ satisfies the following properties:
\bee\label{neweq:1}
|F(\mu)-\mu|\le 2v;
\ene
moreover, if $\mu\in [\la_1-4v, \la_2+4v]$, then
\bee\label{neweq:2}
|F(\mu)-\mu|\le \tau
\ene
(this follows from \eqref{neweq:0}). A little thought shows that this implies
\bee\label{Ger2}
|\mu_l(\tilde H+ 2v(Q+\sum_m
P_{j_m}^m))-\mu_l(\tilde H)|\le
\tau
\ene
for $p\le l\le q$. Indeed, suppose that \eqref{Ger2} is not satisfied for some $l$, say
\bee\label{Ger3}
\mu_l(\tilde H+ 2v(Q+\sum_m
P_{j_m}^m))-\mu_l(\tilde H)>\tau;
\ene
in particular, this implies $\mu_l(\tilde H+ 2v(Q+\sum_mP_{j_m}^m))>\la_1-2v$.
Then the pigeonhole principle shows that $F$ maps at least one of the eigenvalues $\mu_k(\tilde H)$,
$k\le l$ to $\mu_t(\tilde H+2v(Q+\sum_m P_{j_m}^m))$ with
$t\ge l$. If $\mu_k(\tilde H)<\la_1-4v$, this contradicts \eqref{neweq:1},
and if $\mu_k(\tilde H)\ge\la_1-4v$, this contradicts \eqref{neweq:2}.
These contradictions prove \eqref{Ger2}. Similarly, we prove that
\bee\label{Ger4}
|\mu_l(\tilde H- 2v(Q+\sum_m
P_{j_m}^m))-\mu_l(\tilde H)|\le
\tau.
\ene
Estimates \eqref{Ger2} and \eqref{Ger4}
together with \eqref{1.7} prove the lemma.
\enp

\bec\label{perturbationnew}
If all conditions of lemma \ref{perturbation2} are satisfied, there exists an injection
$G$ defined on a set of eigenvalues of the operator $\sum_m P^mHP^m$ (all eigenvalues are
counted according to their multiplicities) and mapping them to a subset of the
set of eigenvalues of
$H$ (again considered counting multiplicities) such that:

(i) all eigenvalues of $H$ inside $J$ have a pre-image,

(ii) If $\mu_j\in [\la_1+2v,\la_2-2v]$ is an eigenvalue of $\sum_m P^mHP^m$, then
\bees
|G(\mu_j)-\mu_j|\le
\max_m \left[(6v)^{2j_m+1}\prod_{j=1}^{j_m}(a_j^m-6v)^{-2}\right],
\enes

and

(iii) $G(\mu_j(\sum_m P^mHP^m))=\mu_{j+l}(H)$, where $l$ is the number of eigenvalues of
$QH_0Q$ which are smaller than $\lambda_1$.
\enc

\bep
Statements (i) and (ii) follow immediately from lemma \ref{perturbation2}, and to prove
(iii) we just notice that if $\mu_j(\sum_m P^mHP^m)\in J$, then
\bees
\mu_j(\sum_m P^mHP^m)=\mu_{j+l}(QH_0Q+\sum_m P^mHP^m).
\enes
\enp

\section{Lattice points}
In this section, we prove various auxiliary estimates of angles
between integer vectors.
\bel\label{complement0} Let
$\boldeta_1,\dots,\boldeta_n\in\Z^d$ be linearly independent. Let
$\Ga=Z(\boldeta_1,\dots,\boldeta_n)$ and suppose that
$\bnu_1,\dots,\bnu_{n-1}\in\Ga\cap B(R)$. Then there exists a
vector $\bth\in\Ga$, $\bth\ne 0$ orthogonal to all $\bnu_j$'s,
such that
\bee\label{eq:complement0}
|\bth|\le 2^{n}|\Ga|\prod_{j=1}^{n-1}|\bnu_j|
\ene
and, therefore, $|\bth|\le 2^{n}|\Ga|R^{n-1}$.
\enl
\begin{proof} 
For $r>1$ let $\CA_r\subset R(\boldeta_1,\dots,\boldeta_n)$ be the
set
\begin{equation*}
\CA_r = \{\bxi\in R(\boldeta_1,\dots,\boldeta_n): |\lu \bxi,
\bnu_j\ru|< 1, j = 1, 2, \dots, n-1,\ \& \ |\bxi|< r \}.
\end{equation*}
This set is obviously convex and symmetric about the origin.
Moreover,
\begin{equation*}
\volume(\CA_r)> r \prod_{j=1}^{n-1}|\bnu_j|^{-1}
\end{equation*}
By Minkowski's convex body theorem (see, e.g., \cite{Cas},
\S III.2.2 Theorem II), under the condition
$\volume(\CA_r)> |\Ga| 2^n$ the set $\CA_r$ contains at least
two non-zero points $\pm\bth\in\Ga$. The above condition is satisfied if $r
\prod_{j=1}^{n-1}|\bnu_j|^{-1}\ge 2^d|\Ga|$, that is if $r\ge
2^{d}|\Ga|\prod_{j=1}^{n-1}|\bnu_j|$. Since $\bnu_j$'s and $\bth$
are integer vectors, the condition $|\lu\bth, \bnu_j\ru| < 1$ is
equivalent to $\lu\bth, \bnu_j\ru = 0$. This implies the required
result.
\end{proof}

\bel\label{angle} Let $\bth_1,\dots,\bth_n,\bmu\in\Z^d\cap B(R)$
be linearly independent. Then the angle between $\bmu$ and
$R(\bth_1,\dots,\bth_n)$ is $\gg R^{-n-1}$.
\enl
\bep
Suppose this
angle is smaller than $R^{-n-1}$. Then the lattice
$\Ga=Z(\bth_1,\dots,\bth_n,\bmu)$ has $|\Ga|\le 1$. Lemma
\ref{complement0} then implies that there exists a vector
$\bth\in\Ga$, $\bth\perp\bth_j$, $|\bth|\ll R^n$. Then, since $\bth$
and $\bmu$ are non-orthogonal integer vectors, we have:
$|\lu\bmu,\bth\ru|\ge 1$, and $\sin$ of the angle between $\bmu$
and $R(\bth_1,\dots,\bth_n)$, which equals $\cos$ of the angle
between $\bmu$ and $\bth$, is bounded below by
$|\bth|^{-1}|\bmu|^{-1}\gg R^{-n-1}$.
\enp

\bec\label{angle:cor}
Let $\bth_1,\dots,\bth_n,\bmu\in\Z^d\cap
B(R)$ be linearly independent. Then the angle between $\BF\bmu$
and $R(\BF\bth_1,\dots,\BF\bth_n)$ is $\gg R^{-n-1}$.
\enc
\bep
This is equivalent to saying that for each $\bxi\in
R(\bth_1,\dots,\bth_n)$ the distance between $\BF(n(\bmu))$ and
$\BF\bxi$ is larger than $c R^{-n-1}$.
But the distance between $\BF(n(\bmu))$ and $\BF\bxi$
is not greater than the largest eigenvalue of $\BF$ times the
distance between $n(\bmu)$ and $\bxi$. Now the statement follows
from lemma \ref{angle}.
\enp

It is possible to generalize lemma \ref{angle} a bit: if we talk about distance from a vector to
a linear sub-space instead of the angle between a vector and a subspace,
we can drop the assumption that $|\mu|\le R$:
\bel\label{new:distance} Let $\bth_1,\dots,\bth_n\in\Z^d\cap B(R)$ and $\bmu\in\Z^d$
be linearly independent. Then the distance between $\bmu$ and
$R(\bth_1,\dots,\bth_n)$ is $\gg R^{-n}$.
\enl
\bep
The distance between $\bmu$ and $R(\bth_1,\dots,\bth_n)$ equals
$\frac{|Z(\mu,\bth_1,\dots,\bth_n)|}{|Z(\bth_1,\dots,\bth_n)|}$. The square of the denominator of this fraction is
the determinant of the $n\times n$ matrix $A$ with $A_{jk}:=\lu\bth_j,\bth_k\ru=O(R^2)$, so the denominator
is $O(R^n)$. Similarly, the square of the numerator is the determinant of $(n+1)\times(n+1)$ non-singular matrix
with integer entries. Therefore, the absolute value of the numerator is at least $1$. This proves our statement.
\enp

The following result is a generalization of lemma \ref{complement0}
and the proof is similar:
\bel\label{complement} Let $\Ga$ be as above and let
$\bnu_1,\dots,\bnu_m\in\Ga\cap B(R)$ ($m<n$)
Then there exist
linearly independent vectors $\bth_1,\dots,\bth_{n-m}\in\Ga$ such
that each $\bth_l$ is orthogonal to each $\bnu_j$ and
\bee\label{eq:complement}
\prod_{l=1}^{n-m}|\bth_l|\ll |\Ga|\prod_{j=1}^{m}|\bnu_j|\le |\Ga|R^m
\ene
\enl
\bep
Applying lemma \ref{complement0} $n-m$ times, we see that the set of vectors
from $\Ga$ which are
orthogonal to $\bnu_j$ form a lattice $\Ga_{n-m}$ of dimension $n-m$.
Let $\bth_j$ ($j=1,\dots,n-m$) be successive minimal vectors of $\Ga_{n-m}$.
That means that $\bth_1$ is the smallest nonzero vector in $\Ga_{n-m}$; $\bth_2\in\Ga_{n-m}$
is the smallest vector linearly independent of $\bth_1$; $\bth_3\in\Ga_{n-m}$ is the smallest
vector linearly independent of $\bth_1,\bth_2$, etc.

For $r>|\bth_1|$ let $\CA_r\subset R(\boldeta_1,\dots,\boldeta_n)$ be the
set
\begin{equation*}
\CA_r = \{\bxi\in R(\boldeta_1,\dots,\boldeta_n): |\lu \bxi,
\bnu_j\ru|< 1, j = 1, 2, \dots, m,\ \& \ |\bxi|< r \}.
\end{equation*}
This set is obviously convex and symmetric about the origin.
Moreover,
\begin{equation}\label{convex1}
\volume(\CA_r)\gg r^{n-m} \prod_{j=1}^{m}|\bnu_j|^{-1}.
\end{equation}
Applying again Minkowski's convex body theorem, we find that
the set $\CA_r$ contains at least
\bee\label{convex2}
N=[2^{-n}|\Ga|^{-1}\volume(\CA_r)]\gg |\Ga|^{-1} r^{n-m} \prod_{j=1}^{m}|\bnu_j|^{-1}
\ene
pairs of points $\pm\bmu_k\in\Ga$, $k=1,\dots,N$.
Obviously, each $\bmu_k$ is orthogonal to each $\bnu_j$.
Suppose, $r<|\bth_{n-m}|$. Then, obviously,
$|\bth_p|\le r<|\bth_{p+1}|$ for some $p\le n-m-1$.
The dimension of $R(\bmu_1,\dots,\bmu_N)$ is then
$\le p$, and each
$\bmu_k$ is a linear combination of $\bth_1,\dots,\bth_p$ with integer coefficients.
Denote $\Ga_p:=Z(\bth_1,\dots,\bth_p)$. Minkowski's second theorem (see, e.g.,
\cite{Cas}, \S VIII.2, Theorem I) shows that
\bee\label{convex11}
\prod_{l=1}^p |\bth_l|\ll |\Ga_p|.
\ene
A simple packing argument
shows that $N|\Ga_p|$ is smaller than the volume of the ball of radius $(p+1)r$ in
$R(\bth_1,\dots,\bth_p)$, i.e. $N|\Ga_p|\ll r^p$. Estimate \eqref{convex11}
implies
\bees
N\prod_{l=1}^p |\bth_l|\ll r^p.
\enes

Therefore, if the condition
\bee\label{convex3}
N\prod_{l=1}^p |\bth_l| >C r^{p}
\ene
is satisfied,
where $C$ is sufficiently large, this implies that $r\ge |\bth_{p+1}|$.

Estimate \eqref{convex2} shows that if
\bee\label{ger1}
r>C |\Ga| \prod_{j=1}^{m}|\bnu_j|\prod_{l=1}^{n-m-1} |\bth_l|^{-1},
\ene
then condition \eqref{convex3} with $p=n-m-1$
will be satisfied
and this would guarantee that $r>|\bth_{n-m}|$.
In other words, if $r$ is greater than the RHS of \eqref{ger1}, then $r>|\bth_{n-m}|$.
This implies
\bees
|\bth_{n-m}|\ll|\Ga| \prod_{j=1}^{m}|\bnu_j|\prod_{l=1}^{n-m-1} |\bth_l|^{-1},
\enes
which
finishes the proof.
\enp

Let $\bnu_1,\dots,\bnu_n\in\R^d$
($n\le d$). We denote by $\BM=\BM(\bnu_1,\dots,\bnu_n)$ the $d\times n$
matrix whose $j$th column equals $\bnu_j$. We also denote
\bee\label{norm1}
\|\bnu_1\wedge\dots\wedge\bnu_n\|_2:=\sqrt{\det
(\BM^*\BM)}
\ene
($\BM^*\BM$ is obviously non-negative, and so is the
determinant). The reason for the notation is that we can think of
$\|\bnu_1\wedge\dots\wedge\bnu_n\|_2$ as being the Hilbert-Schmidt
norm of the tensor $\bnu_1\wedge\dots\wedge\bnu_n$.
\bel\label{exteriorlem}
Let
$\bnu_1,\dots,\bnu_n,\bmu_1,\dots,\bmu_m\in\R^d$.
Let $V_1=R(\bnu_1,\dots,\bnu_n)$ and
$V_2=R(\bmu_1,\dots,\bmu_m)$. Let $\al$ be the angle between $V_1$
and $V_2$. Then the following inequality holds:
\bee\label{exterior1} \sin\al\ge
\frac{\|\bnu_1\wedge\dots\wedge\bnu_n\wedge\bmu_1\wedge\dots\wedge\bmu_m\|_2}
{\|\bnu_1\wedge\dots\wedge\bnu_n\|_2\|\bmu_1\wedge\dots\wedge\bmu_m\|_2}
\ene
\enl
\bep
If we multiply matrix $\BM$ from the right by a
non-singular $n\times n$ matrix $\BB$, the expression \eqref{norm1}
is multiplied by $\det \BB$. This observation shows that elementary
transformations of the set of vectors $\bnu$ (i.e. multiplying
$\bnu_j$ by a non-zero scalar, adding $\bnu_j$ to $\bnu_k$, etc)
do not change both sides of \eqref{exterior1}; the same is the case
for elementary transformations of the vectors $\bmu$. Thus, we may assume
that vectors $\bnu$ form an orthonormal basis of $V_1$, vectors $\bmu$ form an
orthonormal basis of $V_2$, and the angle between $\bnu_1$ and
$\bmu_1$ equals $\al$. Notice that now the denominator of the RHS
of \eqref{exterior1} equals $1$. Next, we notice that an
orthogonal change of coordinates results in multiplying $\BM$ from
the left by a $d\times d$ orthogonal matrix and thus doesn't change
\eqref{norm1} and the RHS of \eqref{exterior1}; the LHS of
\eqref{exterior1} is obviously invariant under an orthogonal change
of coordinates as well. Assume, without loss of generality, that
$n\ge m$. Then, applying an orthogonal change of coordinates, we
can make our vectors to have the following form: $\bnu_j=\be_j$
($j=1,\dots,n$, where $\be_j$ are standard basis vectors),
$\bmu_j=p_j\be_j+q_j\be_{n+j}$, ($p_j,q_j\ge 0$, $p_j^2+q_j^2=1$).
Elementary geometry implies $\cos\al=p_1$, and so $\sin\al=q_1$.
Computing the determinant, we obtain:
\bees
\|\bnu_1\wedge\dots\wedge\bnu_n\wedge\bmu_1\wedge\dots\wedge\bmu_m\|_2=
\prod_{j=1}^n q_j\le q_1.
\enes
The lemma is proved.
\enp
\bel\label{angle1} Let
$\bnu_1,\dots,\bnu_n,\bmu_1,\dots,\bmu_m\in\Z^d\cap B(R)$ be
linearly independent. Let $V_1=R(\bnu_1,\dots,\bnu_n)$ and
$V_2=R(\bmu_1,\dots,\bmu_m)$. Then the angle between $V_1$ and
$V_2$ is $\gg \prod_{j=1}^n|\bnu_j|^{-1}\prod_{l=1}^m|\bmu_l|^{-1}\ge R^{-n-m}$.
\enl
\ber
It is not difficult to see that
the power $-n-m$ in lemma \ref{angle1} is optimal.
\enr
\bep We
use the inequality \eqref{exterior1} and notice that the numerator
of the RHS is a square root of an integer number (since all vectors
involved are integer) and is non-zero (since the vectors are
linearly independent). Therefore, the numerator is at least $1$.
The denominator is, obviously,
$\ll \prod_{j=1}^n|\bnu_j|\prod_{l=1}^m|\bmu_l|$. This finishes the
proof.
\enp

Using the same argument we have used while proving
Corollary \ref{angle:cor}, we can prove the following
\bec\label{angle1:cor} Let
$\bnu_1,\dots,\bnu_n,\bmu_1,\dots,\bmu_m\in\Z^d\cap B(R)$ be
linearly independent. Let $V_1=R(\BF\bnu_1,\dots,\BF\bnu_n)$ and
$V_2=R(\BF\bmu_1,\dots,\BF\bmu_m)$. Then the angle between $V_1$
and $V_2$ is $\gg R^{-n-m}$
\enc

\bel\label{angle2} Let $\bnu_1,\dots,\bnu_n\in\Z^d\cap B(R)$ and
$\bmu_1,\dots,\bmu_m\in\Z^d\cap B(R)$ be two
sets. We assume that each set consists of linearly independent vectors
(but the union of two sets is not necessary linearly independent).
Let $V_1=R(\bnu_1,\dots,\bnu_n)$ and
$V_2=R(\bmu_1,\dots,\bmu_m)$. Suppose, $\dim\left(V_1\cap
V_2\right)=l$. Then there are $l$ integer linearly independent
vectors $\bth_1,\dots,\bth_l\in \left(\Z^d\cap V_1\cap V_2\right)$
such that $|\bth_j|\ll R^{m+n-l+1}$. Moreover, the angle between
orthogonal complements to $\left(V_1\cap V_2\right)$ in $V_1$ and
$V_2$ is bounded below by $CR^{-\al}$, $\al=\al(n,m,l)=n+m+2l(m+n-l+1)$.
\enl
\bep
Denote
$\BM=\BM(\bnu_1,\dots,\bnu_n,-\bmu_1\dots,-\bmu_m)$. The
rank of $\BM$ equals $k:=m+n-l$. Without loss of generality we can
assume that the top left $k\times k$ minor of this matrix is
non-zero (otherwise we just change the order of the vectors
$-\bmu_j$ or the order of the coordinates $x_j$). In order to find the
basis of the intersection $V_1\cap V_2$ we have to solve
the system of equations
\bee\label{angle2:1}
\BM\bt=0.
\ene
Indeed, if $\bt=(t_1,\dots,t_{n+m})^T$ is a solution of
\eqref{angle2:1}, then $\sum_{p=1}^n t_p\bnu_p=\sum_{q=1}^m
t_{n+q}\bmu_q\in V_1\cap V_2$. Now the simple linear algebra
tells us that the basis of solutions of \eqref{angle2:1} is formed
by the vectors of the form $(s_1,\dots,s_k,1,0,\dots,0)$,
$(t_1,\dots,t_k,0,1,\dots,0)$,...,
$(\tau_1,\dots,\tau_k,0,\dots,0,1)$. Using Cramer's rule, we find
that each of the numbers $s_j$, $t_j$, $\tau_j$, etc is a ratio of
two determinants, each of them an integer number $\ll R^k$;
moreover, the denominator is the same for all of the numbers
$s_j$, $t_j$, etc. After multiplication by the denominator, we
obtain an integer basis of solutions of \eqref{angle2:1} with
entries $\ll R^k$. For any such solution $\bt$ the following
estimate holds: $|\sum_{p=1}^n t_p\bnu_p|\ll R^{k+1}$. This proves
the first statement of lemma. To prove the second statement, we
first use Lemma \ref{complement} to construct integer bases
$\{\boldeta_1,\dots,\boldeta_{n-l}\}$ and
$\{\bxi_1,\dots,\bxi_{m-l}\}$
of the
orthogonal complements to $V_1\cap V_2$ in $V_1$ and
$V_2$ correspondingly with properties
\bees
\prod_{j=1}^{n-l}|\boldeta_j|\ll R^{n+l(m+n-l+1)}
\enes
and
\bees
\prod_{j=1}^{m-l}|\bxi_j|\ll R^{m+l(m+n-l+1)}.
\enes
Now Lemma \ref{angle1} produces the required estimate.
This finishes the proof.
\enp
Using the
same argument we have used while proving Corollary
\ref{angle:cor}, we can prove the following
\bec\label{angle2:cor}
Let $\bnu_1,\dots,\bnu_n,\bmu_1,\dots,\bmu_m\in\Z^d\cap B(R)$ be
two linearly independent families of vectors. Let
$V_1=R(\BF\bnu_1,\dots,\BF\bnu_n)$ and
$V_2=R(\BF\bmu_1,\dots,\BF\bmu_m)$. Then the angle between
orthogonal complements to $V_1\cap V_2$ in $V_1$ and
$V_2$ is $\gg R^{-\al(n,m,l)}$.
\enc

\section{Reduction to invariant subspaces}
Let $\la=\rho^2$ be a large real number.
In this section, we use lemma \ref{perturbation2} to construct the
family of operators $\tilde H(\bk)$ the spectrum of which (or at
least the part of the spectrum near $\la$) is close to the
spectrum of $H(\bk)$.
Consider the truncated potential
\bee V'(\bx)=\sum_{\bm\in
B(R)\cap\Z^d}\hat V(\bm)e_{\bm}(\bx),
\ene
where
\begin{equation*}
e_{\bm}(\bx):=\frac{1}{(2\pi)^{d/2}} e^{i\lu\bm,\bx\ru},\ \ \bm \in
\mathbb Z^d
\end{equation*}
and
\bee\label{Fourier}
\hat V(\bm)=\int_{[0,2\pi]^d}
V(\bx)e_{-\bm}(\bx)
\ene
are the Fourier coefficients of $V$. $R$
is a large parameter the precise value of which will be chosen
later; at the moment we just state that $R\sim \rho^{\gamma}$ with
$\gamma>0$ being small. Throughout the text, we will prove various
statements which will hold under conditions of the type $R<\rho^{\gamma_j}$.
After each statement of this type, we will always assume, without possibly specifically
mentioning, that these conditions are always satisfied in what follows; at the end, we will
choose $\gamma=\min\gamma_j$.

Since $V$ is smooth, for each $n$ we have
\bee
\sup_{\bx\in\R^d}|V(\bx)-V'(\bx)|< C_n R^{-n}.
\ene
This implies
that if we denote $H'(\bk):=H_0(\bk)+V'$ with the domain
$\CD(\bk)$, the following
estimate holds for all $n$:
\bee\label{truncate1}
|\mu_j(H(\bk))-\mu_j(H'(\bk))|<C_n R^{-n}.
\ene
Throughout this and the next two sections, we
will work with the truncated operators $H'(\bk)$. These sections will be
devoted to the construction of mappings $f,g$ with properties
specified in the introduction. Let $M\in\N$ be a fixed number. For each natural $j$
we denote
\bee\label{Tj}
\T_j:=\Z^d\cap B(jR),\, \T_0:=\{0\}, \, \T'_j:=\T_j\setminus \{0\};
\ene

Let $\GV\subset\R^d$ be a
linear subspace of dimension $n$ and $r>0$. We say that $\GV$ is
an integer $r$-subspace if $\GV=R(\bth_1,\dots,\bth_n)$ and each
$\bth_j$ is an integer vector with length smaller than $r$. The
set of all integer $r$-subspaces of dimension $n$ will be denoted
by $\CV(r,n)$. We mostly will be dealing with $\CV(6MR,n)$;
for brevity we will denote $\CV(n):=\CV(6MR,n)$. If
$\bxi\in\R^d$ and $\GV\in\CV(n)$, we denote $\bxi_\GV$ and
$\bxi^{\perp}_\GV$ vectors such that
\bee\label{decompositionGV}
\bxi=\bxi_\GV+\bxi^{\perp}_\GV,\ \bxi_\GV\in \GV,\
\BG\bxi^{\perp}_\GV\perp \GV.
\ene
If $\GV\in\CV(n)$, we put
$\T(\GV):=\T_{6M}\cap \GV$, $\T'(\GV):=\T(\GV)\setminus\{0\}$.
By $p,q_n$ ($n=1,\dots,d$) we denote positive constants smaller than $1/3$; the precise
value of these constants will be specified later; we also denote $K=\rho^p$ and $L_n=\rho^{q_n}$.

Let $\GV\in\CV(n)$. We denote
\bee\label{defXi0}
\Xi_0(\GV):=\{\bxi\in\CA,|\bxi_{\GV}|<L_n\},
\ene
\bee\label{defXi1}
\Xi_1(\GV):=\bigl(\Xi_0(\GV)+\GV\bigr)\cap \CA,
\ene
\bee\label{defXi2}
\Xi_2(\GV):=\Xi_1(\GV)\setminus\bigl(\cup_{m=n+1}^d
\cup_{\GW\in\CV(m):\,\GV\subset\GW}\Xi_1(\GW)\bigr),
\ene
\bee\label{defXi3}
\Xi_3(\GV):=\Xi_2(\GV)+B(\GV,K),
\ene
and finally,
\bee\label{defXi}
\Xi(\GV):=\Xi_3(\GV)+\T_{M}.
\ene

These objects (especially $\Xi_3(\GV)$ and $\Xi(\GV)$) play a crucial role in what follows;
the pictures of them are shown in Figures \ref{fig:0}-\ref{fig:3} in the case $d=2$ (here, the integer subspaces $\GV$
are $1$-dimensional, so $\GV=R(\bth)$ with $\bth\in\T'$; we have called $\Xi_j(\bth):=\Xi_j(R(\bth))$).
\begin{figure}[!hbt]
\begin{center}
\framebox[0.8\textwidth]{\includegraphics[width=0.60\textwidth]{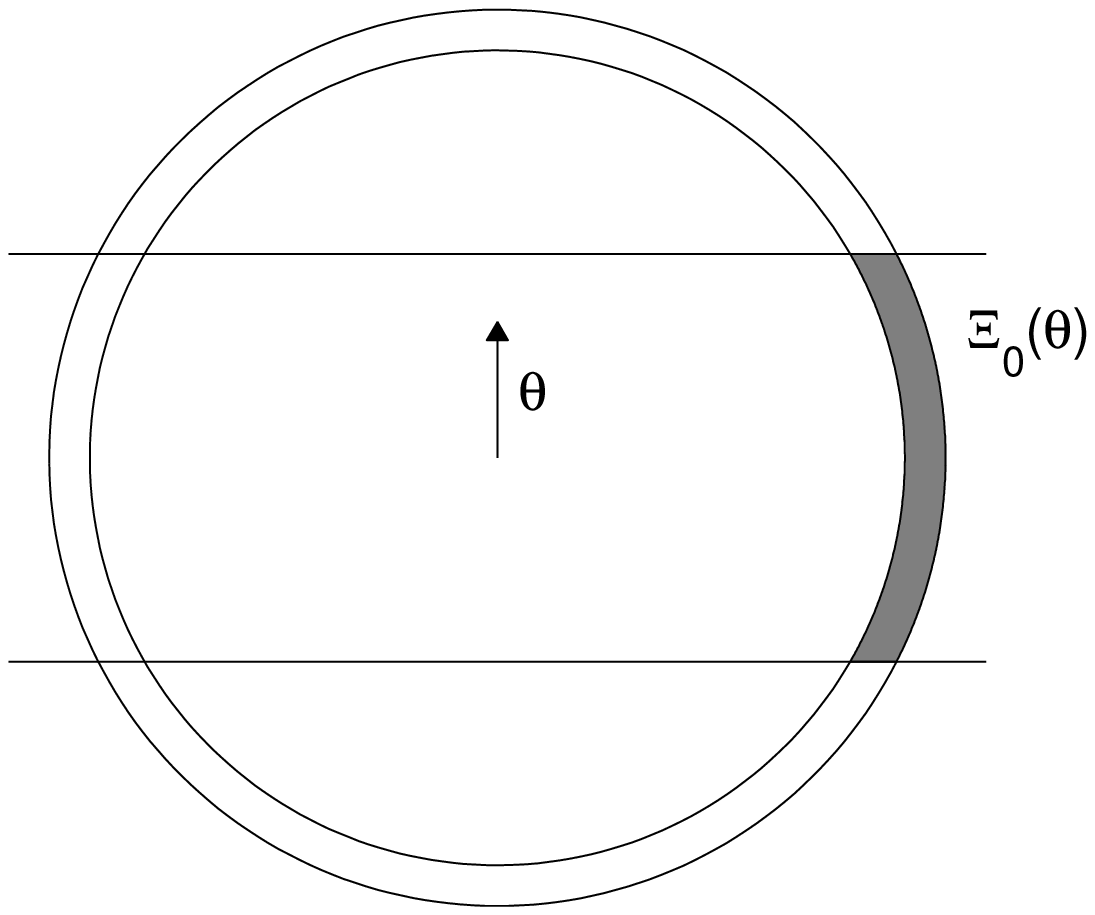}}
\caption{The set $\Xi_0(\bth)$ in the $2$-dimensional case\label{fig:0}}
\end{center}
\end{figure}
\begin{figure}[!hbt]
\begin{center}
\framebox[0.8\textwidth]{\includegraphics[width=0.60\textwidth]{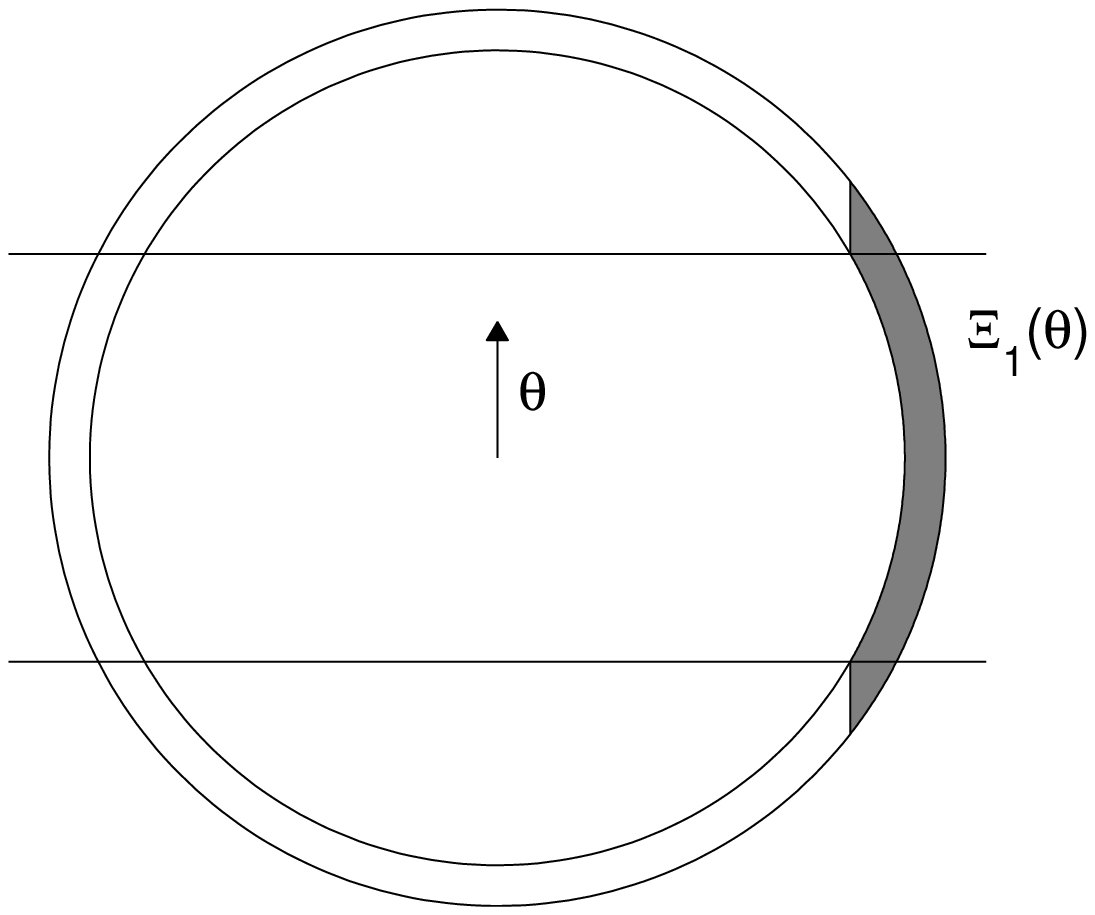}}
\caption{The set $\Xi_1(\bth)=\Xi_2(\bth)$ in the $2$-dimensional case\label{fig:1}}
\end{center}
\end{figure}
\begin{figure}[!hbt]
\begin{center}
\framebox[0.8\textwidth]{\includegraphics[width=0.60\textwidth]{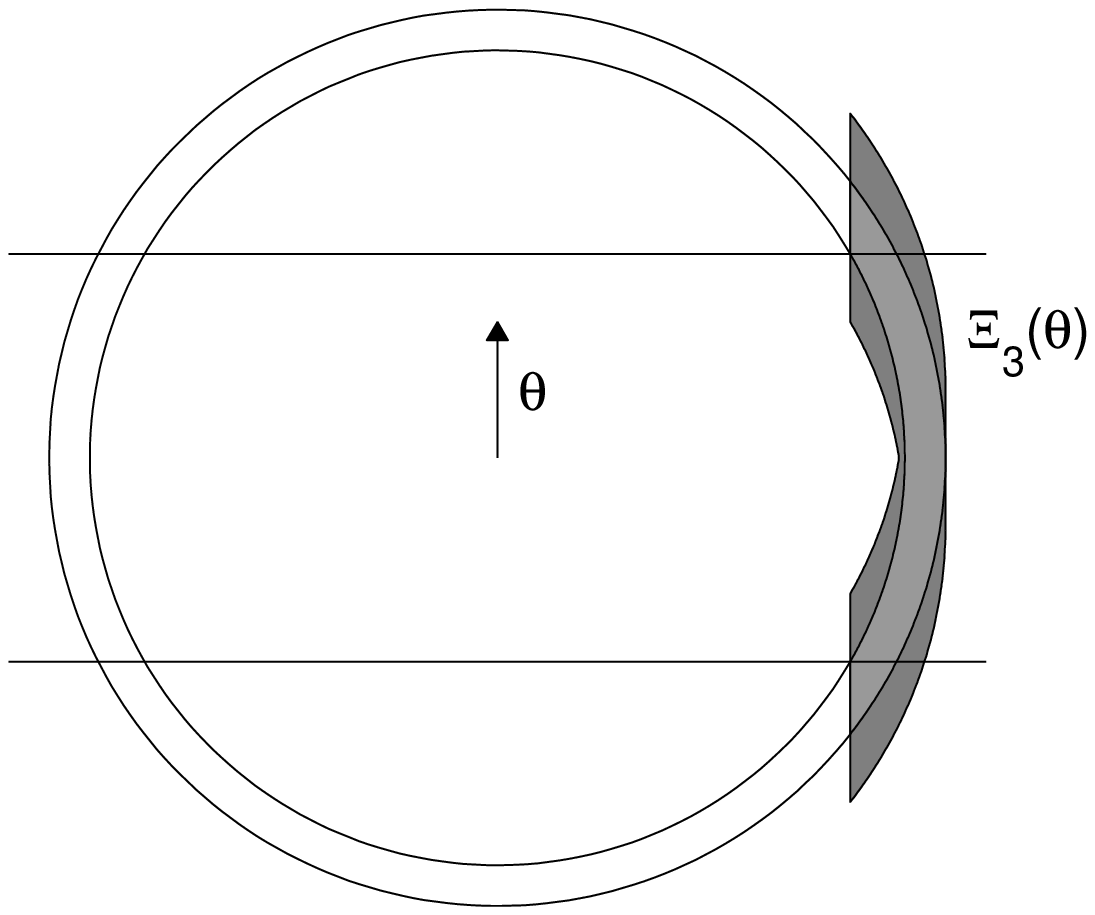}}
\caption{The set $\Xi_3(\bth)$ in the $2$-dimensional case\label{fig:2}}
\end{center}
\end{figure}
\begin{figure}[!hbt]
\begin{center}
\framebox[0.8\textwidth]{\includegraphics[width=0.60\textwidth]{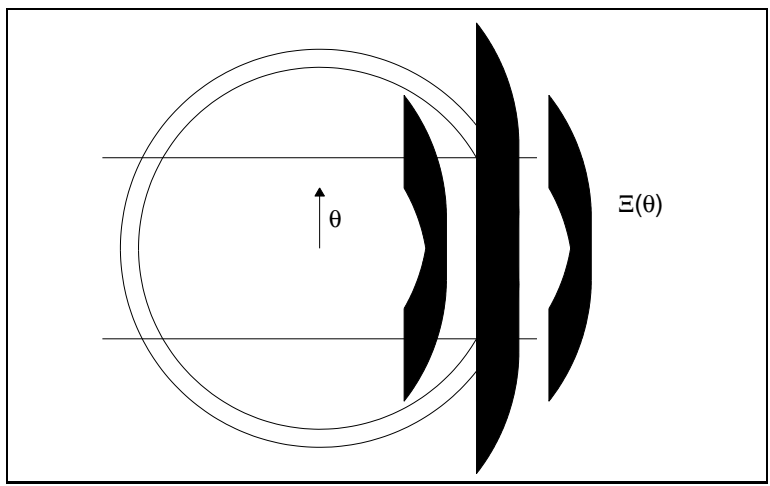}}
\caption{The set $\Xi(\bth)$ in the $2$-dimensional case; here,
$\T=\{(0,0),(\pm 1,0),(0,\pm 1)\}$ consists of five elements.\label{fig:3}}
\end{center}
\end{figure}

It may seem that the definition of these objects is
overcomplicated; for example, one may be tempted to define $\Xi_3(\GV)$ by Figure \ref{fig:4}.
This definition is indeed simpler and it would work in the $2$-dimensional case;
however, if we try to extend this definition to higher dimensions, we would find out that lemma
\ref{changedXi6} no longer holds. One more remark concerning the definitions of the sets $\Xi$ is that
it is very difficult to make a mental picture of them in high dimensions (even when $d=3$). A good approach
to working with these sets is to do it on a purely formal level, without trying to imagine how they look like.
\begin{figure}[!hbt]
\begin{center}
\framebox[0.8\textwidth]{\includegraphics[width=0.60\textwidth]{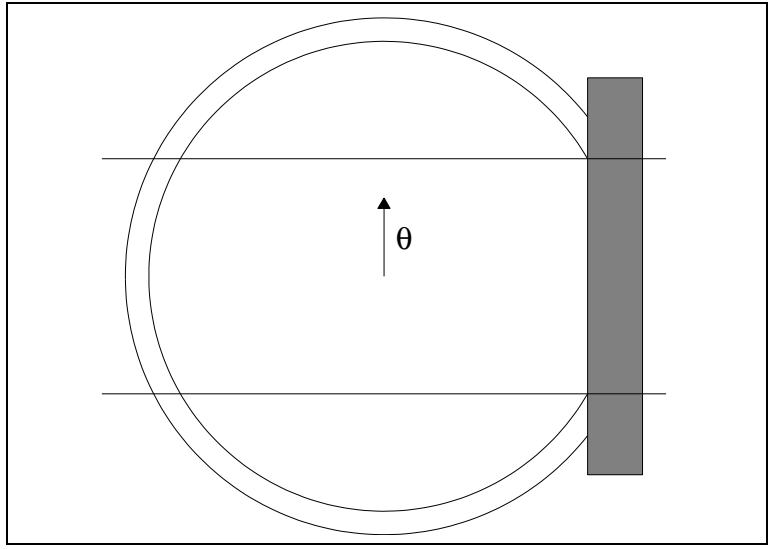}}
\caption{Bad definition of the set $\Xi_3(\bth)$ in the $2$-dimensional case\label{fig:4}}
\end{center}
\end{figure}

We also put
\bee\label{CD}
\CD:=\cup_{m=1}^d \cup_{\GW\in\CV(m)}\Xi_1(\GW)
\ene
and
\bee\label{Brho}
\CB:=\CA\setminus\CD.
\ene
We will often call the set $\CD$ {\em the resonance region} and the set $\CB$
{\em the non-resonance region}.

Note that the definitions \eqref{defXi0}--\eqref{defXi} make sense for the subspace
$\GU_0:=\{0\}\in\CV(0)$. In particular, we have $\Xi_0(\GU_0)=\Xi_1(\GU_0)=\CA$,
$\Xi_2(\GU_0)=\Xi_3(\GU_0)=\CB$, and
\bee\label{defXizero}
\Xi(\GU_0)=\CB+\T_M.
\ene

Let us now formulate several properties of the sets $\Xi_j$. In what follows, we always assume that
$\rho$ and $R$ are sufficiently large. We also assume that $L_n=\rho^{q_n}$ with
$q_{n+1}\ge q_n+3p$ for all $n$, $q_d\le 1/3$, and $K=\rho^p$ with $q_1\ge 3p>0$.
We also put $q_0=0$ so that $L_0=1$. From now on, we fix the values $p$ and $q_n$
satisfying these conditions; say, we put
\bee\label{pq}
q_n=3np,\qquad \qquad p=(9d)^{-1}.
\ene
Finally, we assume that $M>2$ and that
$\rho^p>R^{2\beta}$, where $\beta$ is the maximal possible value the exponent
$\al(n,m,l)$ from lemma \ref{angle2} can attain.
\bel\label{changedXi}
$\Xi_0(\R^d)=\emptyset$.
\enl
\bep
This statement is obvious since if $\GV=\R^d$, then for each $\bxi$ we have
$\bxi=\bxi_{\GV}$; therefore one cannot have a point $\bxi\in\CA$ with
$|\bxi_{\GV}|<L_d\le\rho^{1/3}$.
\enp
\bel\label{changedXi0}
Let $\GV\in\CV(n)$, $0\le n<d$, and $\bxi\in\Xi_1(\GV)$. Then $|\bxi_\GV|< 2L_n$.
\enl
\bep
The condition $\bxi\in\Xi_1(\GV)$ means that $\bxi\in\CA$ and there exists $\bxi'\in\CA$,
$|\bxi'_\GV|<L_n$ such that $\bxi-\bxi'\in\GV$. These conditions imply
\bees
||\BF\bxi_\GV|^2-|\BF\bxi'_\GV|^2|=||\BF\bxi|^2-|\BF\bxi'|^2|\ll 1.
\enes
Now the statement is obvious.
\enp
\bec\label{cor:changedXi0}
If $\bxi\in\Xi(\GV)$, then $|\bxi_\GV|\ll L_n$.
\enc
\bel\label{changedXi1}
Suppose, $\GV_1\in\CV(n_1)$ and $\GV_2\in\CV(n_2)$ are two subspaces such that neither of them is
contained in the other one. Let $\bxi_j\in \Xi_2(\GV_j)$. Then
$|\bxi_1-\bxi_2|> L_1$.
\enl
\bep
The conditions of lemma imply $|(\bxi_j)_{\GV_j}|\ll L_{n_j}$, $j=1,2$. Let $\GW=\GV_1+\GV_2$,
$\GU=\GV_1\cap\GV_2$, $\dim\GU=l$.
Then $\GW$ is an integer $6MR$-subspace, say $\GW\in\CV(m)$. Also, conditions of lemma
imply that $\GW\ne\GV_j$, so $m>n_j$. Suppose, the statement of lemma does not hold, i.e.
$|\bxi_1-\bxi_2|\le L_1$. Then $|(\bxi_1-\bxi_2)_{\GV_2}|\ll L_1$ and thus
$|(\bxi_1)_{\GV_2}|\ll  L_{n_2}$. By corollary \ref{angle2:cor}, the angle between
$\BF\GV_1$ and
$\BF\GV_2$ is at least $CR^{-\al(n_1,n_2,l)}$. Since the projections of $\bxi_1$ onto
$\GV_1$ and $\GV_2$ are smaller than $L_{n_1}$ and $2L_{n_2}$ respectively,
it is a simple geometry to deduce that
$|(\bxi_1)_\GW|\ll (L_{n_1}+L_{n_2})R^{\al(n_1,n_2,l)}$.
Due to the conditions stated before
lemma \ref{changedXi}, this implies $|(\bxi_1)_\GW|<L_m$. Therefore, $\bxi_1\in\Xi_1(\GW)$.
Now definition \eqref{defXi2} implies that $\bxi_1\not\in\Xi_2(\GV_1)$, which contradicts
our assumptions. Thus, $|\bxi_1-\bxi_2|> L_1$.
\enp
\bec\label{cor:changedXi1}
Suppose, $\GV_1\in\CV(n_1)$ and $\GV_2\in\CV(n_2)$ are two subspaces such that neither of them
is contained in the other one. Let $\bxi_j\in \Xi(\GV_j)$. Then
$|\bxi_1-\bxi_2|\gg L_1$.
\enc
\bel\label{changedwidth}
Let $\GV\in\CV(n)$ and $\bxi\in\Xi_3(\GV)$. Then $||\BF\bxi|^2-\rho^2|\ll KL_n$
and $||\BF\bxi^{\perp}_\GV|^2-\rho^2|\ll L_n^2$.
\enl
\bep
The assumption of lemma imply that there exists $\boldeta\in\Xi_2(\GV)$ such that
$\bxi-\boldeta\in \GV$ and $|\bxi-\boldeta|<K$. Lemma \ref{changedXi0}
implies $|\bxi_\GV|\ll L_n$, and thus
$||\BF\bxi|^2-|\BF\boldeta|^2|=||\BF\bxi_\GV|^2-|\BF\boldeta_\GV|^2|\ll KL_n$. The first
statement now follows from the fact that $\boldeta\in\CA$. Now we compute:
\bees
\rho^2-|\BF\bxi^{\perp}_\GV|^2=|\BF\bxi|^2+O(KL_n)-|\BF\bxi^{\perp}_\GV|^2=
|\BF\bxi_\GV|^2+O(KL_n)=O(L_n^2)
\enes
by corollary \ref{cor:changedXi0}.
\enp
Factorizing the LHS's of the estimates from this lemma, we immediately
obtain the following
\bec\label{cor:changedwidth}
Let $\GV\in\CV(n)$ and $\bxi\in\Xi_3(\GV)$. Then $||\BF\bxi|-\rho|\ll \rho^{p+q_n-1}$
and $||\BF\bxi^{\perp}_\GV|-\rho|\ll \rho^{2q_n-1}$.
\enc
\bel\label{changedXi4}
Let $\GV\in\CV(n)$ and $\bxi\in\Xi_3(\GV)$. Suppose, for some $\boldeta\in\CA$ we have
$\bxi-\boldeta\in\GV$. Then $\boldeta\in\Xi_2(\GV)$.
\enl
\bep
Definition \eqref{defXi1} implies that $\boldeta\in\Xi_1(\GV)$. Therefore, in order
to prove our lemma, we need to show that for any $\GW\in\CV(m)$ ($m>n$), $\GV\subset\GW$,
we have $\boldeta\not\in\Xi_1(\GW)$.
Suppose, this is not the case and  $\boldeta\in\Xi_1(\GW)$.
Then the fact that $\bxi\in\Xi_3(\GV)$ means that there exists
a vector $\tilde\bxi\in\Xi_2(\GV)$ with $\bxi-\tilde\bxi\in\GV$.
But then $\boldeta-\tilde\bxi\in\GV\subset\GW$. Therefore, $\tilde\bxi\in\Xi_1(\GW)$. This
contradicts the assumption $\tilde\bxi\in\Xi_2(\GV)$. The lemma is proved.
\enp
\bel\label{changedXi5}
Let $\GV\in\CV(n)$ and $\bxi\in\Xi_3(\GV)$. Suppose, for some $\boldeta\in\GV$ we have
$\bal:=\bxi+\boldeta\not\in\Xi_3(\GV)$. Then $||\BF\bal|^2-\rho^2|\gg K^2$.
\enl
\bep
Let $\tilde\bal$ be the point which satisfies the following conditions:
$\tilde\bal-\bal\in\GV$, $\tilde\bal\in\CA$, and the vector $\bal_\GV$
is a non-negative multiple of $\tilde\bal_\GV$ (a simple geometrical argument
shows that such a point always exists). Then lemma \ref{changedXi4} implies that
$\tilde\bal\in\Xi_2(\GV)$. Therefore, since $\bal\not\in\Xi_3(\GV)$, we have
$||\BF\tilde\bal_\GV|-|\BF\bal_\GV||=|\BF\tilde\bal_\GV-\BF\bal_\GV|\gg K$. Moreover,
\bees
\bes
&||\BF\bal|^2-|\BF\tilde\bal|^2|=||\BF\bal_\GV|^2-|\BF\tilde\bal_\GV|^2|\\&=
||\BF\bal_\GV|-|\BF\tilde\bal_\GV||\,||\BF\bal_\GV|+|\BF\tilde\bal_\GV||\ge
\bigl(|\BF\bal_\GV|-|\BF\tilde\bal_\GV|\bigr)^2\gg K^2.
\end{split}
\enes
This finishes the proof, since $||\BF\tilde\bal|^2-\rho^2|\ll 1$.
\enp
\bel\label{changedXi3}
Let $\GV\in\CV(n)$ and $\bxi\in\Xi_3(\GV)$. Suppose, $\bth\in\T_{6M}'$, $\bth\not\in\GV$.
Denote $\boldeta:=\bxi+\bth$. Then $||\BF\boldeta|^2-\rho^2|\gg K^2L_n$.
\enl
\bep
Let $\GW$ be the linear span of $\GV$ and $\bth$, and let
$\GU:=R(\bth)$ be the one-dimensional subspace.

Assume first that
$|\bxi_\GU|\le K^2L_n$. Then, since $|\bxi_\GV|\le L_n$,
the geometrical argument similar to the one used in the proof of lemma \ref{changedXi1}
implies that $|\bxi_\GW|<L_{n+1}/2$ (recall that the assumption
we have made on the exponents $p$ and $q_n$ imply that $L_{n+1}\ge K^3L_n$). Since $\bxi\in\Xi_3(\GV)$,
there exists a vector $\tilde\bxi\in\Xi_2(\GV)$, $|\bxi-\tilde\bxi|<K$. Therefore,
$|\tilde\bxi_\GW|\le |\bxi_\GW|+|(\tilde\bxi-\bxi)_\GW|<L_{n+1}$, which implies $\tilde\bxi\in\Xi_1(\GW)$.
This contradicts
the condition $\tilde\bxi\in\Xi_2(\GV)$.

Therefore, we must have $|\bxi_\GU|>K^2L_n$. This implies
\bees
||\BF\boldeta|^2-|\BF\bxi|^2|=||\BF(\bxi_\GU+\bth)|^2-|\BF\bxi_\GU|^2|\gg K^2L_n.
\enes
Now it remains to notice that lemma \ref{changedwidth} implies that
$||\BF\bxi|^2-\rho^2|\ll KL_n$. This finishes the proof.
\enp
\bec\label{cor:changedXi3}
Let $\GV\in\CV(n)$ and $\bxi\in\Xi_3(\GV)$. Suppose, $\bth\in\T_{6M}'$ and
$\boldeta=\bxi+\bth\not\in\Xi_3(\GV)$. Then $||\BF\boldeta|^2-\rho^2|\gg K^2$.
\enc
\bep
If $\bth\in\GV$, then the statement follows from lemma \ref{changedXi5}, and if
$\bth\not\in\GV$, the statement follows from lemma \ref{changedXi3}.
\enp

\bel\label{changedXi6}
For each two different integer subspaces $\GV_j\in\CV(n_j)$, $j=1,2$, $0\le n_j<d$
we have $(\Xi(\GV_1)+\T_1)\cap(\Xi(\GV_2)+\T_1)=\emptyset$.
\enl
\bep
Suppose, $\bxi\in(\Xi(\GV_1)+\T_1)\cap(\Xi(\GV_2)+\T_1)$.
Then corollary \ref{cor:changedXi1} implies that
one of the subspaces $\GV_j$ is inside the other, say $\GV_1\subset\GV_2$. Moreover,
there exist two points, $\bxi_1\in\Xi_3(\GV_1)$ and $\bxi_2\in\Xi_3(\GV_2)$ such that
$\bth_j:=\bxi_j-\bxi\in\T_{M+1}$. Then
$\bth:=\bxi_1-\bxi_2=\bth_1-\bth_2\in\T_{3M}$.

There are two possibilities: either $\bth\in\GV_2$, or $\bth\not\in\GV_2$.

Assume first that $\bth\in\GV_2$. Since
$\bxi_j\in\Xi_3(\GV_j)$, there exist points $\tilde\bxi_j\in\Xi_2(\GV_j)$ such that
$\tilde\bxi_j-\bxi_j\in\GV_j$, $|\tilde\bxi_j-\bxi_j|<K$.
But then $\tilde\bxi_1-\tilde\bxi_2\in\GV_2$. Since
$\tilde\bxi_2\in\Xi_2(\GV_2)\subset\Xi_1(\GV_2)$,
according to definition \eqref{defXi1} this means that $\tilde\bxi_1\in\Xi_1(\GV_2)$. Now
definition \eqref{defXi2} implies $\tilde\bxi_1\not\in\Xi_2(\GV_1)$ which contradicts our
assumption.

Assume now $\bth\not\in\GV_2$. Then lemma
\ref{changedXi3} implies
\bees
||\BF\bxi_1|^2-\rho^2|=||\BF(\bxi_2+\bth)|^2-\rho^2|\gg K^2 L_{n_2}.
\enes
However, this contradicts the inequality $||\BF\bxi_1|^2-\rho^2|\ll KL_{n_1}$ which
was established in lemma \ref{changedwidth}.
\enp

\bec\label{cor:changedXi6}
Each point $\bxi\in\CA$ belongs to precisely one of the sets $\Xi(\GV)$.
\enc
\bep
Indeed, definitions \eqref{defXi0}--\eqref{defXizero} imply that each point $\bxi\in\CA$
belongs to at least one of the sets $\Xi(\GV)$.
The rest follows from lemma \ref{changedXi6}.
\enp

Let us introduce more notation. Let
$\CC\subset\Rd$ be a measurable set.
We denote by $\CP^{(\bk)}(\CC)$ the orthogonal projection in $\GH =
L^2([0,2\pi]^d)$ onto the subspace spanned by the exponentials
$e_{\bxi}(\bx)$, $\bxi\in \CC$, $\{\bxi\}=\bk$.
\bel\label{potential}
For arbitrary set $\CC\subset\Rd$ and arbitrary $\bk$ we have:
\bee\label{eq:potential}
V'\CP^{(\bk)}(\CC)=\CP^{(\bk)}(\CC+\T_1)
V'\CP^{(\bk)}(\CC)
\ene
\enl
\bep This follows from the obvious
observation that if $\bxi=\bm+\bk\in\CC$ and $|\bn|\le R$, then
$\bxi+\bn\in\bigl(\CC+\T_1\bigr)$.
\enp

We are going to apply lemma \ref{perturbation2} and now we will
specify what are the projections $P^l_j$. The construction will be
the same for all values of quasi-momenta, so often we will skip
$\bk$ from the superscripts. For each $\GV\in\CV(n)$, $n=0,1,\dots,d-1$ we put
$P(\GV):=\CP^{(\bk)}\bigl(\Xi(\GV)\bigr)$. We also define
$P_j(\GV):=\CP^{(\bk)}\bigl((\Xi_3(\GV)+\T_j)\setminus
(\Xi_3(\GV)+\T_{j-1})\bigr)$, $j=1,\dots,M$, $P_0\bigl(\GV)=\CP^{(\bk)}(\Xi_3(\GV)\bigr)$.
We also denote $Q:=I-\bigl(\sum_{\GV}P(\GV)\bigr)$ (the sum is over all integer $6MR$-subspaces
of dimension $n=0,1,\dots,d-1$).
Now we apply lemma \ref{perturbation2} with the set of projections being $\{P(\GV)\}$,
$J:=[\la-20v,\la+20v]$, and $H_0=H_0(\bk)$. Let us check that all
the conditions of lemma \ref{perturbation2} are satisfied
assuming, as before, that all the conditions before lemma \ref{changedXi} are fulfilled.
Indeed, lemmas \ref{changedXi6} and \ref{potential} imply that
$P(\GV_1)P(\GV_2)=0$ and $P(\GV_1)V P(\GV_2)=0$
for different subsets $\GV_1$ and $\GV_2$ (in particular, $Q$ is also
a projection). Properties $P(\GV)=\sum_{j=0}^MP_j(\GV)$, $P_j(\GV)VP_l(\GV)=0$
for $|j-l|>1$ and $P_j(\GV)VQ=0$ for $j<M$ follow from the construction of the
projections $P_j(\GV)$ and lemma \ref{potential}.
Since $\CA\subset \cup_{\GV} \Xi_3(\GV)$, the distance between the spectrum of
$QH_0Q$ and $J$ is greater than $6v$. Corollary \ref{cor:changedXi3} implies
that the distances between the spectra of $P_j(\GV)H_0P_j(\GV)$, $j=1,\dots,M$
and $J$ are $\gg K^2$.
All these remarks imply that we can apply lemma
\ref{perturbation2} (or rather corollary \ref{perturbationnew}) and, instead of studying eigenvalues inside
$J$ of $H'(\bk)$, study eigenvalues of $\tilde
H(\bk):=\sum_{\GV}P(\GV)H'(\bk)P(\GV)$; the distance between any
eigenvalue of $H'(\bk)$ lying inside $J$ and the corresponding
eigenvalue of $\tilde H(\bk)$ is $\ll \rho^{-4Mp}$.

To be more precise, we do the following. Assume $\bxi=\bn+\bk\in\CA$. Then
$\bxi\in\Xi(\GV)$ for some uniquely defined $\GV\in\CV(n)$. In the following sections,
we will define a mapping $\tilde g:\bxi\mapsto\mu_{\tau(\bxi)}(P(\GV)H'(\bk)P(\GV))$,
where $\tau=\tau(\bxi)$ is a function with values in $\N$. The mapping $\tilde g$ will be
an injection and any eigenvalue of $P(\GV)H'(\bk)P(\GV)$ inside $J$ will have a pre-image under $\tilde g$.
Then, $\tilde g(\bxi)$ is also an eigenvalue
of $\sum_{\GV}P(\GV)H'(\bk)P(\GV)+QH'(\bk)Q$, say
\bees
\tilde g(\bxi)=\mu_{\tau_1(\bxi)}\Bigl(\sum_{\GV}P(\GV)H'(\bk)P(\GV)+QH'(\bk)Q\Bigr).
\enes
Then lemma \ref{perturbation2} implies that $|\tilde g(\bxi)-\mu_{\tau_1(\bxi)}(H'(\bk))|\ll \rho^{-4Mp}$.
We then define $f(\bxi):=\mu_{\tau_1(\bxi)}(H(\bk))$ so that $|\tilde g(\bxi)-f(\bxi)|\ll \rho^{-4Mp}$.
In order to construct the mapping $g$, we compute $\tilde g$ (or at least obtain an asymptotic formula for it)
and then, roughly speaking, throw away terms which are sufficiently small for our purposes.

In the next
two sections, we discuss how to obtain
an asymptotic formula for $\tilde g$ when $\rho$ is large. We will consider separately the
case $\bxi\in\Xi_2(\GU_0)=\CB$ (recall that $\GU_0=\{0\}\in\CV(0)$ and we have called $\CB$ the non-resonance region)
and the case of $\bxi$ lying inside the resonance region $\CD$. We start by looking at the case $\bxi\in\CB$.

\section{Computation of the eigenvalues outside resonance layers}
First of all, we notice that lemma \ref{changedXi3} implies that the
operator $P(\GU_0)H'(\bk)P(\GU_0)$ splits into the direct sum of operators.
Namely,
\bee
P(\GU_0)H'(\bk)P(\GU_0)=\bigoplus
\CP^{(\bk)}(\bxi+\T_M)H'(\bk)\CP^{(\bk)}(\bxi+\T_M),
\ene
the sum being over all
$\bxi\in\CB$ with $\{\bxi\}=\bk$. We denote by
$\tilde g(\bxi)$ the eigenvalue of
$\CP^{(\bk)}(\bxi+\T_M)H'(\bk)\CP^{(\bk)}(\bxi+\T_M)$ which lies within the distance
$v$ from $|\BF\bxi|^2$ (Lemma \ref{changedXi3} implies that this eigenvalue is unique).
Our next task is to compute
$\tilde g(\bxi)$. In this section we will prove the following lemma:
\bel\label{eigenvalues1} Let $R<\rho^{pd^{-1}/2}$.
Then the following asymptotic formula holds:
\bee\label{eq1:eigenvalues1}
\bes
&\tilde g(\bxi)\sim|\BF\bxi|^2\\
&+\sum_{r=1}^{\infty} \sum_{\boldeta_1,\dots,\boldeta_r\in \T'_M}
\sum_{n_1+\dots+n_r\ge 2}A_{n_1,\dots,n_r}
\lu\bxi,\BG\boldeta_1\ru^{-n_1}\dots\lu\bxi,\BG\boldeta_{r}\ru^{-n_r}
\end{split}
\ene
in the sense that for each $m\in\N$ we have
\bee\label{eq2:eigenvalues1}
\bes
&\tilde g(\bxi)-|\BF\bxi|^2\\
&-\sum_{r=1}^{m}\sum_{\boldeta_1,\dots,\boldeta_r\in \T'_M} \sum_{2\le
n_1+\dots+n_r\le m}A_{n_1,\dots,n_r}
\lu\bxi,\BG\boldeta_1\ru^{-n_1}\dots\lu\bxi,\BG\boldeta_{r}\ru^{-n_r}\\
&=O(\rho^{-(m+1)p}).
\end{split}
\ene
uniformly over $R<\rho^{pd^{-1}/2}$.
Here, $A_{n_1,\dots,n_p}$ is a polynomial of the Fourier
coefficients $\hat V(\boldeta_j)$ and $\hat
V(\boldeta_j-\boldeta_l)$ of the potential.
\enl
\bep
Let us
denote
\bee\label{aboldeta}
a(\boldeta)=|\BF(\bxi+\boldeta)|^2.
\ene
The matrix of $P(\bxi+\T_M)H'(\bk)P(\bxi+\T_M)$ has the following
form:

\bee\label{matrix1}
\begin{pmatrix}
a(0) &\hat V(\boldeta_1)&\hat V(\boldeta_2)& \dots&\hat
V(\boldeta_n)&\dots
\\
\overline{\hat V(\boldeta_1)}&a(\boldeta_1)&\hat
V(\boldeta_2-\boldeta_1)& \dots&\hat V(\boldeta_n-\boldeta_1)
&\dots\\
\overline{\hat V(\boldeta_2)}&\overline{\hat
V(\boldeta_2-\boldeta_1)}& a(\boldeta_2)& \dots&\hat
V(\boldeta_n-\boldeta_2) &\dots
\\
\vdots&\vdots&\vdots&\ddots&\vdots&\dots\\
\overline{\hat V(\boldeta_n)}&\overline{\hat
V(\boldeta_n-\boldeta_1)}&\overline{\hat
V(\boldeta_n-\boldeta_2)}&\dots&
a(\boldeta_n)&\dots\\
\vdots&\vdots&\vdots&\dots&\vdots&\ddots\\
\end{pmatrix}
\end{equation}

The diagonal elements of this matrix equal
$|\BF(\bxi+\boldeta)|^2$ (with $\boldeta$ running over $\T_M$) and
off-diagonal elements are Fourier coefficients of the potential
(and are thus bounded). Let $L$ be the number of columns of this matrix;
obviously, $L\asymp R^d$.

Let us compute the characteristic polynomial $p(\mu)$ of \eqref{matrix1}.
The definition of the determinant implies
\bee\label{char2n} p(\mu)=\Bigl(\prod_{\boldeta\in\T_M}
(a(\boldeta)-\mu)\Bigr)+\sum_{m=2}^L J_m(\mu),
\ene
where $J_m$ consists of products of exactly $(L-m)$ diagonal terms of
\eqref{matrix1} and $m$ off-diagonal terms. Put $J_m=J_m'+J_m''$,
where $J_m'$ (resp. $J_m''$) consists of all terms, not containing (resp. containing)
$(a(0)-\mu)$. Then we can re-write \eqref{char2n} as
\bee\label{char2} p(\mu)=\Bigl(\prod_{\boldeta\in\T'_M}
(a(\boldeta)-\mu)\Bigr)\Bigl((a(0)-\mu)+I(\mu) \Bigr),
\ene
where $I(\mu):=\sum_{m=1}^{L-1} I_m(\mu)+\sum_{m=2}^L\tilde I_m(\mu)$
with
\bees
I_m:=\frac{J_{m+1}'}{\prod_{\boldeta\in\T'_M}
(a(\boldeta)-\mu)}
\enes
and
\bees
\tilde I_m:=\frac{J_{m}''}{\prod_{\boldeta\in\T'_M}
(a(\boldeta)-\mu)}.
\enes
We can easily compute the first several terms:
\bee
I_1(\mu):=-\sum_{\boldeta\in \T'_M} \frac{|\hat
V(\boldeta)|^2}{a(\boldeta)-\mu},
\end{equation}
\bee
\bes I_2(\mu)&:=-\sum_{\boldeta,\boldeta'\in
\T'_M,\boldeta\ne\boldeta'}\frac{2\Re (\hat V(\boldeta) \hat
V(\boldeta-\boldeta')\overline{\hat
V(\boldeta')})}{(a(\boldeta)-\mu)(a(\boldeta')-\mu)},
\\
\tilde I_2(\mu)&:=-(a(0)-\mu)\sum_{\boldeta,\boldeta'\in
\T'_M,\boldeta\ne\boldeta'} \frac{|\hat
V(\boldeta-\boldeta')|^2}{(a(\boldeta)-\mu)(a(\boldeta')-\mu)}.
\end{split}
\ene
Overall, $I_m$ is the sum of $O(R^{dn})$ terms of the form
\bee
\frac{W_m(\boldeta_1,\dots, \boldeta_{n})}{(a(\boldeta_1)-\mu)\dots
(a(\boldeta_n)-\mu)},
\ene
and $\tilde I_m$ is the sum of $O(R^{dn})$ terms of the form
\bee
(a(0)-\mu)\frac{\tilde W_m(\boldeta_1,\dots, \boldeta_{n})}
{(a(\boldeta_1)-\mu)\dots (a(\boldeta_n)-\mu)}.
\ene
Here, $W_m(\boldeta_1,\dots, \boldeta_{n})$ and $\tilde
W_m(\boldeta_1,\dots, \boldeta_{n})$ are some polynomials of $\hat
V(\boldeta_j)$ and $\hat V(\boldeta_j-\boldeta_l)$.

On the interval $[a(0)-v,a(0)+v]$ the equation $p(\mu)=0$ has a
unique solution, which we have called $\tilde g(\bxi)$; this is
the solution of the equation $a(0)-\mu+I(\mu)=0$. After denoting
$F(\mu):=a(0)+I(\mu)$, this equation becomes equivalent to
$\mu=F(\mu)$. Throughout the rest of the section we will assume
that $\mu\in[a(0)-v,a(0)+v]$. Then, since $\bxi\in\CB$, lemma
\ref{changedXi3} guarantees that $|a(\boldeta)-a(0)|\gg \rho^{2p}$
for $\boldeta\in\T'_M$. This implies $I_n(\mu)=O(R^{dn}\rho^{-2np})=O(\rho^{-np})$;
similarly, $\tilde I_n(\mu)=O(\rho^{-np})$. Computing the derivatives, we see that
$\frac{d}{d\mu}I_n(\mu)$ and $\frac{d}{d\mu}\tilde I_n(\mu)$ are $O(\rho^{-np})$ as well.
Slightly more careful analysis shows that in fact
$I_1=O(R^d\rho^{-4p})=O(\rho^{-2p})$ and $\frac{d}{d\mu}I_1=O(R^d\rho^{-4p})=O(\rho^{-2p})$.
Indeed, we have:
\bee\label{I1}
\bes I_1(\mu)&=-\sum_{\boldeta\in \T'_M}
\frac{|\hat V(\boldeta)|^2}{a(\boldeta)-\mu}\\
&=-\frac{1}{2}\sum_{\boldeta\in
\T'_M}|\hat V(\boldeta)|^2(\frac{1}{a(\boldeta)-\mu}+
\frac{1}{a(-\boldeta)-\mu})\\
&=-\frac{1}{2}\sum_{\boldeta\in \T'_M}|\hat V(\boldeta)|^2
(\frac{a(\boldeta)+a(-\boldeta)-2\mu}{(a(\boldeta)-\mu)(a(-\boldeta)-\mu)})\\
&=-\sum_{\boldeta\in \T'_M}|\hat V(\boldeta)|^2
(\frac{a(0)+|\BF\boldeta|^2-\mu}{(a(\boldeta)-\mu)(a(-\boldeta)-\mu)}),
\end{split}
\end{equation}
and it remains to notice that $\sum_{\boldeta\in \T'_M}|\hat V(\boldeta)|^2$
is bounded by the square of the
$L_2$-norm of $V$.
These estimates show that when $R<\rho^{pd^{-1}/2}$, we have
$I(\mu)=O(\rho^{-2p})$ and
$\frac{d}{d\mu}F(\mu)=\frac{d}{d\mu}I(\mu)=O(\rho^{-2p})$. We will
find $\tilde\la(\bxi)$ using a sequence of approximations. We
define a sequence $\mu_k$ in the following way: $\mu_0=a(0)$,
$\mu_{k+1}=F(\mu_k)=a(0)+I(\mu_k)$. Since
$|\mu_{k+1}-\tilde g(\bxi)|= |F(\mu_{k})-F(\tilde g(\bxi))|=
|\mu_k-\tilde g(\bxi)|O(\rho^{-2p})$ and
$|\mu_0-\tilde g(\bxi)|=O(1)$, we have:
\bee\label{approximation1}
|\mu_k-\tilde g(\bxi)|=O(\rho^{-2kp}).
\ene
Therefore, we will
prove the lemma if we show that for all $k\ge 1$ the approximation
$\mu_k$ enjoys the same asymptotic behaviour
\eqref{eq1:eigenvalues1}, at least up to an error
$O(\rho^{-kp})$. This computation is straightforward. For
example, we have
\bees
\mu_1=|\BF\bxi|^2+I_1(|\BF\bxi|^2)+I_2(|\BF\bxi|^2)+\tilde
I_2(|\BF\bxi|^2)+O(\rho^{-3p}),
\enes
and, using \eqref{I1}, we
obtain:
\bee\label{new11}
\bes
I_1(a(0))&=-\sum_{\boldeta\in
\T'_M}|V(\boldeta)|^2
(\frac{|\BF\boldeta|^2}{(a(\boldeta)-a(0))(a(-\boldeta)-a(0))})\\
&=\sum_{\boldeta\in \T'_M}|V(\boldeta)|^2
(\frac{|\BF\boldeta|^2}{(2\lu\BF\bxi,\BF\boldeta\ru+|\BF\boldeta|^2)
(2\lu\BF\bxi,\BF\boldeta\ru-|\BF\boldeta|^2)})\\
&=\sum_{\boldeta\in \T'_M}|V(\boldeta)|^2
(\frac{|\BF\boldeta|^2}{4\lu\BF\bxi,\BF\boldeta\ru^2-|\BF\boldeta|^4})\\
&=\sum_{\boldeta\in \T'_M}|V(\boldeta)|^2
(\frac{4^{-1}|\BF\boldeta|^2\lu\BF\bxi,\BF\boldeta\ru^{-2}}
{1-4^{-1}|\BF\boldeta|^4\lu\BF\bxi,\BF\boldeta\ru^{-2}})\\
&=\sum_{\boldeta\in \T'_M}|V(\boldeta)|^2
\sum_{n=1}^{\infty}4^{-n}|\BF\boldeta|^{4n-2} \lu\BF\bxi,\BF\boldeta\ru^{-2n}\\
&=\sum_{\boldeta\in \T'_M}|V(\boldeta)|^2
\sum_{n=1}^{\infty}4^{-n}|\BF\boldeta|^{4n-2}
\lu\bxi,\BG\boldeta\ru^{-2n}.
\end{split}
\ene
Computations of $I_2(a(0))$ are similar (and, obviously, $\tilde
I_2(a(0))=0$), only now the result will have terms which involve
inner products of $\xi$ with two different $\boldeta$'s. Thus,
\bee\label{mu1}
\mu_1=|\BF\bxi|^2+\sum_{\boldeta_1,\boldeta_2\in
\T'_M} \sum_{n_1,n_2}A_{n_1,n_2}
\lu\bxi,\BG\boldeta_1\ru^{-n_1}\lu\bxi,\BG\boldeta_2\ru^{-n_2}+O(\rho^{-3p}),
\ene
the sum being over all $n_1$, $n_2$ with $n_1+n_2\ge 2$ (in
fact, we can take the sum over $n_1+n_2=2$, since other terms will
be $O(\rho^{-3p})$). Using induction, it is easy to prove now
that
\bee\label{muk}
\bes
\mu_k&=|\BF\bxi|^2\\
&+\sum_{r=1}^{k+1}\sum_{\boldeta_1,\dots,\boldeta_{r}\in \T'_M}
\sum_{n_1,\dots,n_{r}}A_{n_1,\dots,n_{r}}
\lu\bxi,\BG\boldeta_1\ru^{-n_1}\dots\lu\bxi,\BG\boldeta_{r}\ru^{-n_{r}}\\
&+O(\rho^{-(k+2)p}),
\end{split}
\ene
the sum being over $2\le \sum_{j=1}^{r} n_j\le k+1$;
$A_{n_1,\dots,n_p}$ is a polynomial of $\{\hat V(\boldeta_j)\}$
and $\{\hat V(\boldeta_j-\boldeta_l)\}$. Indeed, if $\mu_k$ satisfies \eqref{muk},
then a calculation similar to \eqref{new11} shows that for each $\boldeta\in\T'_M$
the fraction $\frac{1}{a(\boldeta)-\mu_k}$ can be decomposed as a sum of products of
negative powers of $\lu\bxi,\BG\boldeta_j\ru$. Therefore, all functions $I_n(\mu_k)$
(and, thus, $I(\mu_k)$) admit similar decomposition. This implies that
the next approximation $\mu_{k+1}=|\BF\bxi|^2+I(\mu_k)$ also satisfies \eqref{muk}.

Estimate \eqref{approximation1} now shows that the asymptotic
formula \eqref{eq2:eigenvalues1} holds.
\enp
We now define $g(\bxi)$ as the finite part of the RHS of the expansion \eqref{eq1:eigenvalues1}, namely
\bee\label{tildegnew}
\bes
&g(\bxi)=|\BF\bxi|^2\\
&+\sum_{r=1}^{4M} \sum_{\boldeta_1,\dots,\boldeta_r\in \T'_M}
\sum_{n_1+\dots+n_r\ge 2}A_{n_1,\dots,n_r}
\lu\bxi,\BG\boldeta_1\ru^{-n_1}\dots\lu\bxi,\BG\boldeta_{r}\ru^{-n_r}.
\end{split}
\ene
\bel
We have:
\bee\label{gtildeg}
|g(\bxi)-\tilde g(\bxi)|\ll\rho^{-4Mp}.
\ene
\enl
\bep
This follows from lemma \ref{eigenvalues1}.
\enp

\section{Computation of the eigenvalues inside resonance layers}


Now let us fix $\GV\in\CV(n)$, $1\le n\le d-1$, and try to study the eigenvalues of
$P(\GV)H'(\bk)P(\GV)$.
Let $\bxi=\bn+\bk\in\Xi_3(\GV)$. We denote
\bee\label{newBUpsj}
\BUps_j=\BUps_j(\bxi):=\bigl((\bxi+(\GV\cap\Z^d))
\cap\Xi_j(\GV)\bigr),\qquad j=0,1,2,3,
\ene
\bee\label{newBUps}
\BUps=\BUps(\bxi):=
\BUps_3(\bxi)+\T_M,
\ene
\bee\label{Pbxi}
P(\bxi):=\CP^{(\bk)}\bigl(\BUps(\bxi)\bigr),
\ene
\bee\label{Hbxi}
H'(\bxi):= P(\bxi)H'(\bk)P(\bxi),
\ene
\bee\label{H0bxi}
H_0(\bxi):=P(\bxi)H_0(\bk)P(\bxi),
\ene
and
\bee\label{Vbxi}
V'_{\bxi}:=P(\bxi)V'P(\bxi).
\ene
\begin{figure}[!hbt]
\begin{center}
\framebox[0.8\textwidth]{\includegraphics[width=0.60\textwidth]{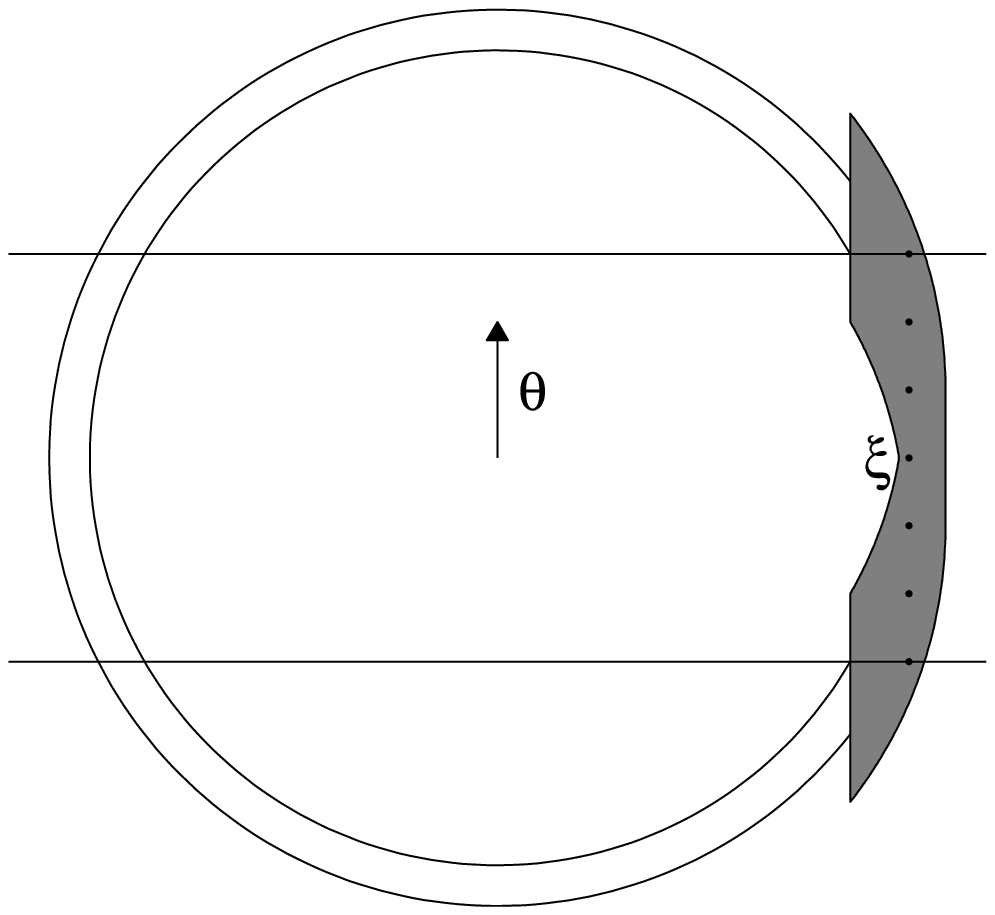}}
\caption{The sets $\Xi_3(\bth)$ and $\BUps_3(\bxi)$ \label{fig:5}}
\end{center}
\end{figure}
\begin{figure}[!hbt]
\begin{center}
\framebox[0.8\textwidth]{\includegraphics[width=0.60\textwidth]{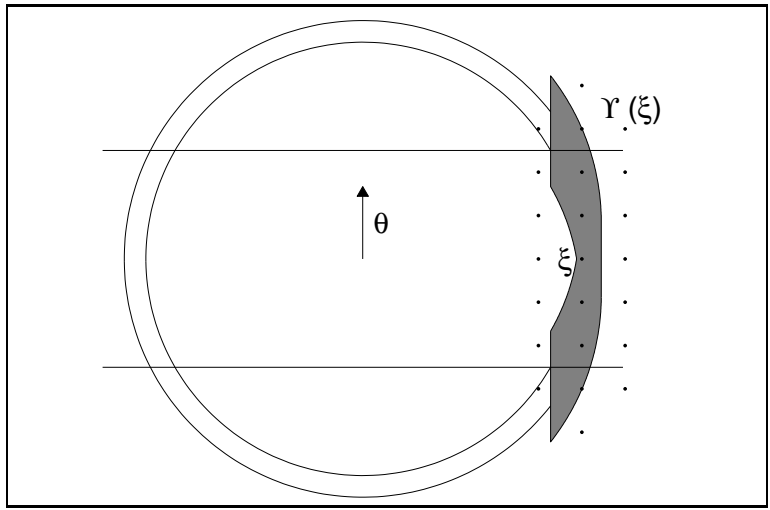}}
\caption{The sets $\Xi_3(\bth)$ and $\BUps(\bxi)$; here,
$\T=\{(0,0),(\pm 1,0),(0,\pm 1)\}$ consists of five elements. \label{fig:6}}
\end{center}
\end{figure}
Out of all sets denoted by the letter $\BUps$, we will mostly use $\BUps_3(\bxi)$ and $\BUps(\bxi)$; see
Figures \ref{fig:5}-\ref{fig:6}  for an illustration of these sets when $d=2$.
Let us establish some simple properties of these sets.
\bel\label{properties:BUps1}
Suppose, $\boldeta\in\BUps(\bxi)\setminus\BUps_3(\bxi)$. Then $||\BF\boldeta|^2-\la|\gg K^2$ (in particular,
$\boldeta\not\in\CA$).
\enl
\bep
The assumptions of the lemma imply that $\boldeta=\tilde\bxi+\bth$ with $\tilde\bxi\in\BUps_3
(\bxi)$ and $\bth\in\T_M$.
If $\bth\not\in\GV$, the statement follows from lemma \ref{changedXi3}. Assume $\bth\in\GV$.
Then $\boldeta\not\in\Xi_3(\GV)$
(otherwise we had $\boldeta\in\BUps_3(\bxi)$). Now the statement follows from lemma \ref{changedXi5}.
\enp
\bel\label{properties:BUps2}
We have $\BUps_3(\bxi)\subset\Xi_3(\GV)$ and $\BUps(\bxi)\subset\Xi(\GV)$. If $\boldeta\in\BUps(\bxi)$,
then $\boldeta-\bxi\in\Z^d$.
If for some $\bxi_1,\bxi_2\in\Xi_3(\GV)$ we have $\BUps(\bxi_1)\cap\BUps(\bxi_2)\ne\emptyset$,
then $\BUps(\bxi_1)=\BUps(\bxi_2)$.
\enl
\bep
The first three statements follow immediately from the definitions.
Assume $\BUps(\bxi_1)\cap\BUps(\bxi_2)\ne\emptyset$, say
$\boldeta\in\BUps(\bxi_1)\cap\BUps(\bxi_2)$. 
Then $\boldeta=\tilde\bxi_j+\bth_j$, $j=1,2$, with $\tilde\bxi_j\in\Xi_3(\GV)$ and $\bth_j\in\T_M$.
Then $\tilde\bxi_1=\tilde\bxi_2+(\bth_2-\bth_1)$. Since $\bth_2-\bth_1\in\T_{2M}$,
lemmas \ref{changedXi3} and \ref{changedwidth}
imply that $\bth_2-\bth_1\in\GV$. Therefore, $\bxi_2-\bxi_1\in(\GV\cap\Z^d)$, so $\BUps(\bxi_1)=\BUps(\bxi_2)$.
\enp
Lemma \ref{properties:BUps2} implies that the operator $P(\GV)
H'(\bk)P(\GV)$ splits into the
direct sum:
\bee\label{directsum61}
P(\GV) H'(\bk)P(\GV)=\bigoplus
H'(\bxi),
\ene
the sum being over all classes of equivalence of
$\bxi\in\Xi_3(\GV)$ with $\{\bxi\}=\bk$. Two vectors $\bxi_1$ and $\bxi_2$
are equivalent if $\BUps(\bxi_1)=\BUps(\bxi_2)$.

\ber\label{newremark}
The programme formulated at the end of section 5 requires to put into correspondence to each point
$\bxi\in\Xi_3(\GV)$ a number $\tilde g(\bxi)$ which is an eigenvalue of $P(\GV)H'(\bk)P(\GV)$. It is natural to choose
$\tilde g(\bxi)$ to be an eigenvalue of $H'(\bxi)$, say $\tilde g(\bxi)=\mu_j(H'(\bxi))$, where $j=j(\bxi)$ is some
natural number, and the mapping $j:\BUps(\bxi)\to\N$ is (at least) an injection. There are certain technical problems
with defining the function $j$. The first problem is that the sets $\BUps(\bxi_1)$ and $\BUps(\bxi_2)$ can have different
number of elements for different $\bxi_1,\bxi_2\in\Xi_3(\GV)$ (as Figure \ref{fig:7} illustrates), and the mapping $j$
obviously has to
take care of this fact. The second problem is that the mapping $j$ cannot possibly be continuous (otherwise,
since it takes only
natural values, it would be constant
and therefore not an injection), so $\tilde g$ as well cannot be continuous. Finally, we want $\tilde g(\bxi)$
not to change too much when
we change $\bxi$ a little. We cannot exactly achieve this (since, as we mentioned above, $\tilde g$ must be
discontinuous),
but we can achieve some weaker version of this (see lemma \ref{newpartialofs} for the precise statement).
\enr
\begin{figure}[!hbt]
\begin{center}
\framebox[0.8\textwidth]{\includegraphics[width=0.60\textwidth]{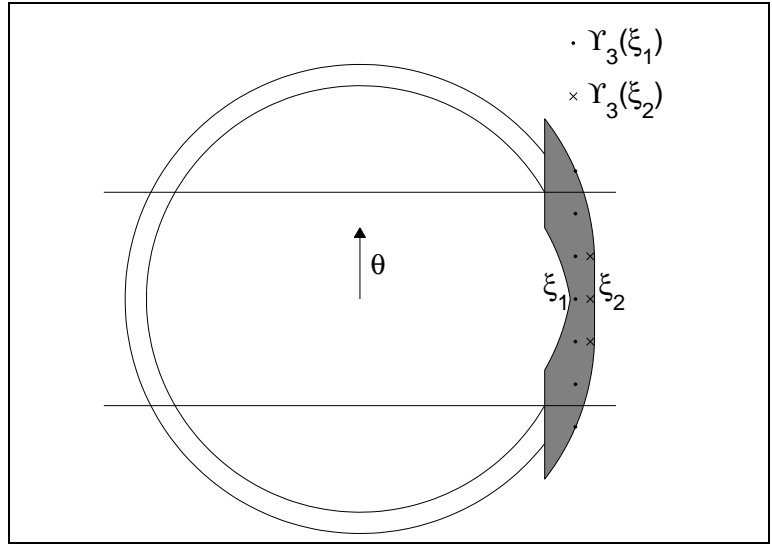}}
\caption{The sets $\BUps_3(\bxi_1)$ (dots) and $\BUps_3(\bxi_2)$ (crosses)
have different number of elements\label{fig:7}}
\end{center}
\end{figure}

Hence, we will study
operators $H'(\bk)$ for each
$\bxi=\bm+\bk\in\Xi(\GV)$ with $\{\bxi\}=\bk$. Recall that we have
denoted by $\bxi_\GV$ and $\bxi_\GV^{\perp}$ vectors such that
$\bxi=\bxi_\GV+\bxi_\GV^{\perp}$, $\bxi_\GV\in \GV$,
$\BG\bxi_\GV^{\perp}\perp \GV$. Let us also define
\bee\label{rbxi}
r=r(\bxi):=|\BF\bxi_\GV^{\perp}|,\,\bxi'_\GV:=\bxi_\GV^{\perp}/r,
\ene
so that $|\BF\bxi'_{\GV}|=1$. 
We can think of the triple $(r,\bxi'_\GV,\bxi_\GV)$ as the
cylindrical coordinates on $\Xi(\GV)$.
Corollary \ref{cor:changedXi0} implies that
$|\bxi_\GV|\ll\rho^{q_n}$; corollary \ref{cor:changedwidth} implies
\bee\label{rrho}
|r-\rho|=O(\rho^{2q_n-1})=O(\rho^{-1/3}),
\ene
since $q_n\le 1/3$; in particular, we have $r>0$.
The current objective is to
express the asymptotic behaviour of eigenvalues of $H'(\bxi)$ inside $J$ in terms
of $r$. In order to do this, we want to compare the eigenvalues of $H'(\bxi_1)$ and
$H'(\bxi_2)$ when $\bxi_1,\bxi_2\in\Xi(\GV)$ are two points which are 
close to each other.
Since the operators $H'(\bxi_1)$ and $H'(\bxi_2)$ act in different Hilbert
spaces $P(\bxi_j)\GH$, we first need to map these Hilbert spaces onto each other.
A natural idea is to employ the
mapping $F_{\bxi_1,\bxi_2}: P(\bxi_1)\GH\to P(\bxi_2)\GH$
defined in the following way:
\bee\label{mappingF}
F_{\bxi_1,\bxi_2}(e_{\boldeta})=e_{\boldeta+\bxi_2-\bxi_1}.
\ene
This mapping is `almost'
an isometry, except for the fact that it is not well-defined, i.e. it
could happen for example that $\boldeta\in \BUps(\bxi_1)$,
but $(\boldeta+\bxi_2-\bxi_1)\not\in \BUps(\bxi_2)$ (Figure \ref{fig:7} illustrates how this can happen).
In order to avoid this, we will
extend the sets $\BUps(\bxi)$. We do this in the following way. First, for $\bxi_1,\bxi_2\in\Xi_2(\GV)$ we define
\bee\label{bupsxi12}
\BUps(\bxi_1;\bxi_2):=\BUps(\bxi_1)\cup\bigl(\BUps(\bxi_2)-\bxi_2+\bxi_1\bigr)
\ene
and, similarly,
\bees
\BUps_3(\bxi_1;\bxi_2):=\BUps_3(\bxi_1)\cup\bigl(\BUps_3(\bxi_2)-\bxi_2+\bxi_1\bigr)
\enes
(the set $\BUps_3(\bxi_2;\bxi_1)$ is shown on Figure \ref{fig:8}).
We also define
\bee\label{Pbxi12}
P(\bxi_1;\bxi_2):=\CP^{(\bk_1)}\bigl(\BUps(\bxi_1;\bxi_2)\bigr),
\ene
where $\bk_1:=\{\bxi_1\}$;
\bee\label{Hbxi12}
H'(\bxi_1;\bxi_2):= P(\bxi_1;\bxi_2)H'(\bxi_1)P(\bxi_1;\bxi_2),
\ene
and
\bee\label{H0bxi12}
H_0(\bxi_1;\bxi_2):= P(\bxi_1;\bxi_2)H_0(\bxi_1)P(\bxi_1;\bxi_2).
\ene
\begin{figure}[!hbt]
\begin{center}
\framebox[0.8\textwidth]{\includegraphics[width=0.60\textwidth]{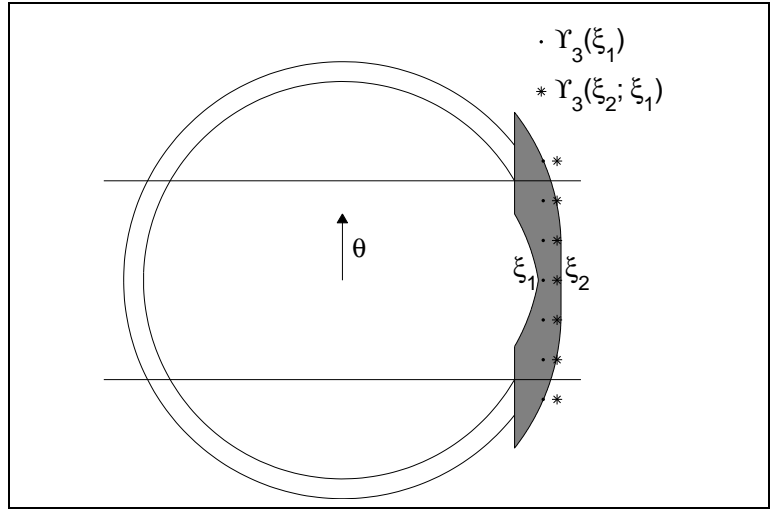}}
\caption{Now the sets $\BUps_3(\bxi_1;\bxi_2)=\BUps_3(\bxi_1)$ (dots) and $\BUps_3(\bxi_2;\bxi_1)$ (stars)
have the same number of elements\label{fig:8}}
\end{center}
\end{figure}

Suppose also that $\bxi\in\Xi_2(\GV)$ and let $U\subset\Xi_2(\GV)$ be a set containing $\bxi$ of
diameter $\ll \rho^{-1}$. Denote
\bee\label{bupsxiu}
\BUps(\bxi;U):=\cup_{\boldeta\in U}\BUps(\bxi;\boldeta),
\ene
\bees
\BUps_3(\bxi;U):=\cup_{\boldeta\in U}\BUps_3(\bxi;\boldeta),
\enes
\bees
P(\bxi;U)=\CP^{(\bk)}\bigl(\BUps(\bxi;U)\bigr),
\enes
\bee\label{Hbxi12U}
H'(\bxi;U):= P(\bxi;U)H'(\bxi)P(\bxi;U),
\ene
and
\bee\label{H0bxi12U}
H_0(\bxi;U):= P(\bxi;U)H_0(\bxi)P(\bxi;U).
\ene
Notice that $\BUps(\bxi_1;\bxi_2)=\BUps(\bxi_1;\{\bxi_1,\bxi_2\})$.

Now if we define the mapping
$F_{\bxi_1,\bxi_2}:\, P(\bxi_1;\bxi_2)\GH\to P(\bxi_2;\bxi_1)\GH$ by formula
\eqref{mappingF}, this mapping will be a bijection and an isometry, since obviously
\bees
\BUps(\bxi_2;\bxi_1)=\BUps(\bxi_1;\bxi_2)+\bxi_2-\bxi_1.
\enes
Similarly, if $U$ is any set containing $\bxi_1$ and $\bxi_2$, then the mapping
$F_{\bxi_1;U}:\, P(\bxi_1;U)\GH\to P(\bxi_2;U)\GH$ defined by
\eqref{mappingF} will be a bijection and an isometry, since
\bees
\BUps(\bxi_2;U)=\BUps(\bxi_1;U)+\bxi_2-\bxi_1.
\enes
Note also that if $\boldeta\in\BUps(\bxi;U)$, then $\boldeta-\bxi\in\Z^d$.

The problem, of course, is that in general the spectra of $H'(\bxi_1)$ and
$H'(\bxi_1;\bxi_2)$ (or $H'(\bxi_1;U)$) can be quite far from each other. However, we can
give sufficient
conditions which guarantee that the spectra of $H'(\bxi_1)$ and
$H'(\bxi_1;\bxi_2)$ (or rather the parts of the spectra lying inside $J$) are
within a small distance (of order $O(\rho^{-4Mp})$) from each other.
The following statement is a straightforward corollary of lemma \ref{perturbation2}.

\bel\label{newisometry}
a) Let $\bxi_1,\bxi_2\in \Xi_2(\GV)\subset\CA$ satisfy
%
%
%
$|\bxi_1-\bxi_2|<\rho^{-1}$.
Then there exists a bijection $G=G_{\bxi_1,\bxi_2}$
defined on a subset of the set of all eigenvalues of
$H'(\bxi_1)$ and mapping them to a subset of the set of all eigenvalues of
$H'(\bxi_1;\bxi_2)$ (eigenvalues in both sets are counted including multiplicities) satisfying the following properties:

(i) all eigenvalues of $H'(\bxi_1)$ (resp. $H'(\bxi_1;\bxi_2)$)
inside $J$ are in the domain (resp. range) of $G_{\bxi_1,\bxi_2}$;

(ii) for any eigenvalue $\mu_j(H'(\bxi_1))\in J$ (and thus in the domain of
$G_{\bxi_1,\bxi_2}$) we have:
\bee\label{bijectionofla}
|\mu_j(H'(\bxi_1))-G(\mu_j(H'(\bxi_1)))|\ll \rho^{-4Mp}.
\ene

b) Suppose $\bxi\in U\subset \Xi_2(\GV)$ and the diameter of $U$ is $\ll\rho^{-1}$.
Then there exists a bijection $G=G_{\bxi,U}$
defined on a subset of the set of all eigenvalues of
$H'(\bxi)$ and mapping them to a subset of the set of all eigenvalues of
$H'(\bxi;U)$ (eigenvalues in both sets are counted including multiplicities) satisfying the following properties:

(i) all eigenvalues of $H'(\bxi)$ (resp. $H'(\bxi;U)$)
inside $J$ are in the domain (resp. range) of $G_{\bxi,U}$;

(ii) for any eigenvalue $\mu_j(H'(\bxi))\in J$ (and thus in the domain of
$G_{\bxi,U}$) we have:
\bee\label{bijectionoflaU}
|\mu_j(H'(\bxi))-G(\mu_j(H'(\bxi)))|\ll \rho^{-4Mp}.
\ene

\enl
\bep

Let us prove part a) of this lemma; part b) is proved analogously.
Suppose, $\bxi\in(\BUps(\bxi_1;\bxi_2)\setminus\BUps(\bxi_1))$.
Let us prove that then
\bee\label{newBUps1}
||\BF\bxi|^2-\rho^2|\gg K^2.
\ene
Indeed, we obviously have
$\tilde\bxi:=\bxi+\bxi_2-\bxi_1\in\BUps(\bxi_2)\subset\Xi(\GV)$. Then definitions
\eqref{defXi3}, \eqref{defXi}, \eqref{newBUpsj} and \eqref{newBUps} 
imply that $\tilde\bxi=\tilde\boldeta+\ba+\bth$, $\boldeta\in\bigl(\Xi_2(\GV)\cap(\bxi_2+\GV)\bigr)$,
$\ba\in B(\GV,K)$, $\bth\in\T_M$.
If $\bth\not\in\GV$,
\eqref{newBUps1} follows from lemma \ref{changedXi3} and the inequality
\bees
||\BF\bxi|^2-|\BF\tilde\bxi|^2|\ll 1,
\enes
which in turn follows from the conditions of lemma.
Suppose $\bth\in\GV$. Then $\bxi-\bxi_1=\ba+\bth+(\boldeta-\bxi_2)\in\GV$ and, since
$\bxi\not\in\BUps(\bxi_1)$, we have $\bxi\not\in\BUps_3(\bxi_1)$, which in turn implies $\bxi\not\in\Xi_3(\GV)$.
Now \eqref{newBUps1} follows
from lemma \ref{changedXi5}.

Inequality \eqref{newBUps1} shows that
as before we can apply lemma \ref{perturbation2}, or rather its corollary \ref{perturbationnew}.
This time, we apply this lemma
with $H=H'(\bxi_1;\bxi_2)$,  $H_0=H_0(\bxi_1;\bxi_2)$, $n=0$,
$P^0=P(\bxi_1)$, $Q=P(\bxi_1;\bxi_2)-P(\bxi_1)$, $P_0^0=\CP^{(\bk_1)}(\BUps_3(\bxi_1))$,
$P_j^0(\GV):=P(\bxi_1;\bxi_2)\CP^{(\bk_1)}\bigl((\BUps_3(\bxi_1)+\T_j)\setminus
(\BUps_3(\bxi_1)+\T_{j-1})\bigr)P(\bxi_1;\bxi_2)$, $j=1,\dots,M$. The fulfillment of all conditions of
lemma \ref{perturbation2} follows from \eqref{newBUps1} and lemma \ref{potential}. Now the
statement of lemma immediately follows from corollary \ref{perturbationnew}.
\enp

\ber\label{rem:newisometry0}
Part (iii) of corollary \ref{perturbationnew} shows that the
bijection $G_{\bxi_1,\bxi_2}$ is given by the following formula.
Let $l=l(\bxi_1,\bxi_2)$ be the number of points
\bee\label{newsetminus}
\boldeta\in \BUps(\bxi_1;\bxi_2)\setminus\BUps(\bxi_1)
\ene
with $|\BF\boldeta|^2<\la$ (notice that if $\boldeta$ satisfies \eqref{newsetminus},
then $|\BF\boldeta|\not\in J$).
Then
$G_{\bxi_1,\bxi_2}(\mu_j(H'(\bxi_1)))=\mu_{j+l}(H'(\bxi_1;\bxi_2)))$.
Similarly, if $l=l(\bxi,U)$ is the number of points
\bee\label{newsetminusU}
\boldeta\in \BUps(\bxi;U)\setminus\BUps(\bxi)
\ene
with $|\BF\boldeta|^2<\la$, then
$G_{\bxi,U}(\mu_j(H'(\bxi)))=\mu_{j+l}(H'(\bxi;U)))$.
\enr

The next lemma shows that the eigenvalues of $H'(\bxi;U)$ do not change much if we increase $U$;
this lemma is an immediate corollary of lemma \ref{newisometry}.

\bel\label{newU1}
Let $\bxi\in U_1\subset U_2\subset\Xi_2(\GV)$ and let the diameter of $U_2$ be $\ll\rho^{-1}$.
Denote by $l=l(\bxi;U_1,U_2)$ the number of points
\bee\label{newsetminusU12}
\boldeta\in \BUps(\bxi;U_2)\setminus\BUps(\bxi;U_1)
\ene
with $|\BF\boldeta|^2<\la$. Then:

a) for any eigenvalue $\mu_j(H'(\bxi;U_1))\in J$ we have:
\bee\label{bijectionoflaU1}
|\mu_j(H'(\bxi;U_1))-\mu_{j+l}(H'(\bxi;U_2))|\ll \rho^{-4Mp};
\ene
b) the number $l(\bxi;U_1,U_2)$ does not depend on $\bxi$, i.e. if $\bxi_1,\bxi_2\in U_1$, then
$l(\bxi_1;U_1,U_2)=l(\bxi_2;U_1,U_2)$.
\enl
\bep
Part a) of lemma follows from lemma \ref{newisometry} and remark \ref{rem:newisometry0}, since
$l(\bxi;U_1,U_2)=l_2-l_1$, $l_j:=l(\bxi;U_j)$, and we have
\bee
|\mu_j(H'(\bxi))-\mu_{j+l_j}(H'(\bxi;U_j))|\ll \rho^{-4Mp},\qquad j=1,2.
\ene
Let us prove part b). Suppose, $\boldeta_1\in\BUps(\bxi_1;U_2)\setminus\BUps(\bxi_1;U_1)$.
Then, in the same way as we have proved \eqref{newBUps1}, we can show that $||\BF\boldeta_1|^2-\la|\gg K^2$.
Denote $\boldeta_2:=\boldeta_1+(\bxi_2-\bxi_1)$.
The definitions of the sets $\BUps$ imply that
$\boldeta_2\in\BUps(\bxi_2;U_2)\setminus\BUps(\bxi_2;U_1)$.
Since $|\bxi_2-\bxi_1|\ll\rho^{-1}$, we have
$||\BF\boldeta_2|^2-|\BF\boldeta_1|^2|\ll 1$.
Therefore, the inequality $|\BF\boldeta_1|^2<\la$ is satisfied if and only if the inequality
$|\BF\boldeta_2|^2<\la$ is satisfied. This proves that $l(\bxi_1;U_1,U_2)=l(\bxi_2;U_1,U_2)$.
\enp


As we have already mentioned, if $\bxi,\boldeta\in U$, we have $\BUps(\boldeta;U)=\BUps(\bxi;U)+(\boldeta-\bxi)$,
which implies that the
mapping $F_{\bxi,\boldeta}:P(\bxi;U)\GH\to P(\boldeta;U)\GH$ defined by \eqref{mappingF} is an isometry.
Thus, by considering
the sets $\BUps(\bxi;U)$ instead of $\BUps(\bxi)$ we have overcome the first difficulty mentioned in
remark \ref{newremark}.
Now we will try to face the other problems mentioned there.

Let
$\boldeta_0=0,\boldeta_1,\dots,\boldeta_p$
be the complete system
of representatives of $\T_M$ modulo $\GV$ (we assume of course that $\boldeta_j\in\T_M$). That means that
each vector $\bth\in\T_M$ has
a unique representation $\bth=\boldeta_j+\ba$, $\ba\in\GV$.
Denote 
$\BPsi_j=\BPsi_j(\bxi):=\bigl(\bxi+\boldeta_j+(\GV\cap\Z^d)\bigr)\cap\BUps(\bxi)$.
Then
\bee
\BUps(\bxi)=\bigcup_j \BPsi_j,
\ene
and this is a disjoint union (on Figure \ref{fig:6}, the set $\BPsi_0$ is the middle column of dots, and
$\BPsi_1$ and $\BPsi_2$ are the left and right columns).

Let us compute diagonal elements of $H'(\bxi)$. Let $\boldeta\in\BUps(\bxi)$.
Then $\boldeta$ can be uniquely decomposed as
\bee\label{etadecomposition}
\boldeta=\bxi+\bmu+\boldeta_j
\ene
with $\bmu\in\GV\cap\Z^d$. 
Recall that $H'(\bxi)=H_0(\bxi)+V'_{\bxi}$ and
$H_0(\bxi)e_{\boldeta}=|\BF\boldeta|^2e_{\boldeta}$ whenever $\boldeta\in\BUps(\bxi)$. 
Since $\BF\bxi_{\GV}'\perp\BF\bmu$ and $\BF\bxi_{\GV}'\perp\BF\bxi_{\GV}$,
we have:
\bee\label{AB}
\bes
|\BF\boldeta|^2&=
|\BF(\bxi+\boldeta_j+\bmu)|^2=
|\BF(r\bxi'_\GV+\bxi_\GV+\boldeta_j+\bmu)|^2\\
&=r^2+
2\lu\BF\bxi'_\GV,\BF\boldeta_j\ru r+
|\BF((\bxi+\boldeta_j)_\GV+\bmu)|^2.
\end{split}
\ene
This simple computation
implies that
\bee\label{newpencil}
H'(\bxi)=r^2I+rA+B.
\ene
Here, $A=A(\bxi)=A(\bxi_\GV,\bxi'_\GV,r)$ and
$B=B(\bxi)=B(\bxi_\GV,\bxi'_\GV,r)$ are self-adjoint operators acting in $P(\bxi)\GH$
 in the following way:
\bees
A=2\sum_{j=0}^p\lu\BF\bxi'_\GV,\BF\boldeta_j\ru\CP^{(\bk)}(\BPsi_j);
\enes
in other words,
\bee\label{operatorA}
Ae_{\boldeta}=2\lu\BF\bxi'_\GV,\BF\boldeta_j\ru e_{\boldeta}
=2\lu\BF\bxi'_\GV,\BF(\boldeta-\bxi)\ru e_{\boldeta},
\ene
and
\bee\label{operatorB}
Be_{\boldeta}=|\BF((\bxi+\boldeta_j)_\GV+\bmu)|^2e_{\boldeta}+V'_{\bxi}e_{\boldeta}
=(|\BF\boldeta_{\GV}|^2+V'_{\bxi})e_{\boldeta}
\ene
for all $\boldeta\in \BPsi_j(\bxi)$ with $\boldeta_j$ and $\bmu$ being defined by
\eqref{etadecomposition}.
These definitions imply that $\ker
A=\CP^{(\bk)}(\BPsi_0)\GH$.
Notice that
\bee\label{newboundA}
\|A(\bxi)\|\ll R<\rho^{1/3}
\ene
and
\bee\label{newboundB}
\|B(\bxi)\|\ll L_n^2<\rho^{2/3}
\ene
due to our assumptions made before lemma \ref{changedXi};
see also corollary \ref{cor:changedXi0}.

The dependence of the operator pencil $H'=r^2I+rA+B$ on $r$ is two-fold:
together with the obvious quadratic dependence, the coefficients $A$ and $B$
depend on $r$ as well. However, as we will show in lemma \ref{newdependenceonr},
the second type of dependence is rather weak.
Put
\bees
D(\bxi):=r(\bxi) A(\bxi)+B(\bxi).
\enes
By $\{\nu_j(\bxi)\}$ we denote the eigenvalues
of $D(\bxi)$. Then according to \eqref{newpencil} the eigenvalues of $H'(\bxi)$ are equal to
\bee\label{eigenvalueslamu}
\la_j(\bxi)=r^2(\bxi)+\nu_j(\bxi).
\ene

If $\bxi_1,\bxi_2\in\Xi_3(\GV)$, then we can define the operator $A(\bxi_1;\bxi_2)$
as the operator defined by \eqref{operatorA} with the domain $P(\bxi_1;\bxi_2)\GH$.
Similarly, if $U$ is a set containing $\bxi$ of diameter $\ll\rho^{-1}$, then we
define the operator $A(\bxi;U)$
as the operator defined by \eqref{operatorA} with the domain $P(\bxi;U)\GH$.
In the same way, we can define $B(\bxi_1;\bxi_2)$, $B(\bxi;U)$ (they are defined
by means of \eqref{operatorB}),
$D(\bxi_1;\bxi_2)=r(\bxi_1)A(\bxi_1;\bxi_2)+ B(\bxi_1;\bxi_2)$, and $D(\bxi;U)$. We also denote by
$\nu_j(\bxi_1;\bxi_2)$ the eigenvalues of $D(\bxi_1;\bxi_2)$ and by
\bees
\la_j(\bxi_1;\bxi_2)=r^2(\bxi_1)+\nu_j(\bxi_1;\bxi_2)
\enes
the eigenvalues of $H'(\bxi_1;\bxi_2)$; $\nu_j(\bxi;U)$ and $\la_j(\bxi;U)$ are defined
analogously.

Let us now study how the eigenvalues
change under the change of $r$.

\bel\label{newdependenceonr}
Let $\bxi\in\Xi_3(\GV)$, $r=r(\bxi)$. Let $U$ be a set of diameter $\ll\rho^{-1}$
containing $\bxi$.
Let $t$ be a real number with $|t-r|\ll\rho^{-1}$
and $\ba=\ba(t)\in\Xi_2(\GV)$ be a unique point satisfying
$(\ba)_\GV=(\bxi)_\GV$, $(\ba)'_\GV=(\bxi)'_\GV$, and $r(\ba)=t$
(thus, when we vary $t$, the path $\ba(t)$ is a straight interval which goes along the $\BF$-perpendicular
dropped from the point $\bxi$ onto $\GV$).
Suppose, $\ba\in U$.
Let $\nu_j(t)$ (resp.
$\la_j(t)$) denote the eigenvalues of $D(\ba(t);U)$ (resp. $H'(\ba(t);U)$).
Then
\bee\label{new17}
\frac{d \nu_j(t)}{d t}=O(\rho^{1/3})
\ene
and
\bee\label{new18}
\frac{d \la_j(t)}{d t}=2t+O(\rho^{1/3})
\ene
\enl
\bep
Let $t_1$, $t_2$ be real numbers satisfying $|t_j-r|\ll\rho^{-1}$ and
$\ba_1=\ba(t_1)$, $\ba_2=\ba(t_2)$ be the corresponding points inside $\Xi_3(\GV)\cap U$.
First of all, we notice that the mapping $F_{\ba_1,\ba_2}$ defined by \eqref{mappingF}
is an isometry which maps $P(\ba_1;U)\GH$ onto $P(\ba_2;U)\GH$. Moreover, the definitions of
the operators $A$ and $B$ imply that
\bees
A(\ba_1;U)=F_{\ba_2,\ba_1} A(\ba_2;U) F_{\ba_1,\ba_2};
\enes
similarly,
\bees
B(\ba_1;U)=F_{\ba_2,\ba_1} B(\ba_2;U) F_{\ba_1,\ba_2}.
\enes
These unitary equivalencies show that the eigenvalues $\nu_j(t)$ are in fact the eigenvalues
of the linear operator pencil $tA+B$, with $A$ and $B$ being any of the
operators $A(\ba;U)$ and $B(\ba;U)$ with $\ba$ satisfying
$(\ba)_\GV=(\bxi)_\GV$ and $(\ba)'_\GV=(\bxi)'_\GV$; it
does not matter which particular point $\ba$ we have chosen, since all corresponding
operators are unitarily equivalent. For example, we can choose $A=A(\bxi;U)$ and
$B=B(\bxi;U)$. Now an elementary perturbation theory shows that
\bee\label{newpert}
\frac{d\nu_j}{dt}=\lu A u_j,u_j\ru,
\ene
where $u_j$ is the eigenvector of $D$ corresponding to the eigenvalue $\nu_j$.
The estimate \eqref{newboundA} shows that $d\nu_j/dt=O(\rho^{1/3})$. This proves
\eqref{new17}. The estimate \eqref{new18} follows from this and the identity
$\la_j(t)=t^2+\nu_j(t)$.
\enp

Using similar perturbative argument, we can study how the eigenvalues change when
we change the other variables, namely, $\bxi_{\GV}$ and $\bxi'_{\GV}$.

\bel\label{newdependenceonrest}
Let $\bxi\in\Xi_3(\GV)$, and let $\ba\in\Xi_3(\GV)$ be the point satisfying
$r(\ba)=r(\bxi)$, $|\ba-\bxi|\ll\rho^{-1}$. 
Suppose, $\bxi,\ba\in U$. Then
\bee\label{new19}
|\la_j(\ba;U)-\la_j(\bxi;U)|=|\nu_j(\ba;U)-\nu_j(\bxi;U)|\ll |\ba-\bxi|\rho^{1/3}.
\ene
\enl
\bep
Formula \eqref{eigenvalueslamu} and the condition $r(\ba)=r(\bxi)$ imply that
$\la_j(\ba;U)-\la_j(\bxi;U)=\nu_j(\ba;U)-\nu_j(\bxi;U)$. Moreover, definitions
\eqref{operatorA} and \eqref{operatorB} imply that
\bee\label{newA}
||A(\bxi;U)-F_{\ba,\bxi} A(\ba;U) F_{\bxi,\ba}||\ll |\bxi'_\GV-\ba'_\GV|R\ll
|\ba-\bxi|\rho^{-2/3}
\ene
and
\bee\label{newB}
||B(\bxi;U)-F_{\bxi,\ba} B(\ba;U) F_{\ba,\bxi}||\ll |\bxi_\GV-\ba_\GV|(|\bxi_\GV|+L_n)
\ll |\ba-\bxi|\rho^{1/3}.
\ene
Indeed, let us check for example \eqref{newA}. Suppose, $\boldeta\in\BUps$ (say, $\boldeta\in\BPsi_j$).
Then we have:
\bees
A(\bxi;U)\e_{\boldeta}=2\lu\BF\bxi_{\GV}',\BF\boldeta_j\ru e_{\boldeta}
\enes
and
\bees
F_{\ba,\bxi}AF_{\bxi,\ba}\e_{\boldeta}=2\lu\BF\ba_{\GV}',\BF\boldeta_j\ru e_{\boldeta}.
\enes
Since
\bees
|\ba_{\GV}'-\bxi_{\GV}'|\ll |\ba-\bxi|\rho^{-1}
\enes
and
\bees
|\boldeta_j|\ll R,
\enes
we have \eqref{newA}. The estimate \eqref{newB} can be proved analogously.

Therefore, since $r(\bxi)\sim\rho$, we have
\bees
||D(\bxi;U)-F_{\bxi,\ba} D(\ba;U) F_{\ba,\bxi}||\ll |\ba-\bxi|\rho^{1/3}.
\enes
Since the spectra of $D(\ba;U)$ and $F_{\bxi,\ba} D(\ba;U) F_{\ba,\bxi}$ coinside,
this implies
\bees
|\nu_j(\bxi;U)-\nu_j(\ba;U)|\ll |\ba-\bxi|\rho^{1/3},
\enes
which finishes the proof.
\enp

Let us summarize the information about the spectra of $H'(\bxi)$ we have obtained so far.
Recall that $\CA_1$ is a slightly `slimmed down' version of $\CA$; it consists of all points
$\bxi$ with $|\BF\bxi|^2\in J$.

\bel\label{newsummarize}
Let $\bxi_1,\bxi_2\in U\subset\Xi_2(\GV)\cap\CA_1$
with the diameter of $U$ being $\rho^{-1}$.
Assume that $\mu_j(H'(\bxi_1;U))\in J$. Then
\bees
|\mu_j(H'(\bxi_1;U))-\mu_{j}(H'(\bxi_2;U))|\ll \rho|\bxi_1-\bxi_2|+\rho^{-4Mp}.
\enes
If we assume, moreover, that $(\bxi_1)_\GV=(\bxi_2)_\GV$ and $(\bxi_1)'_\GV=(\bxi_2)'_\GV$,
then
\bees
\mu_j(H'(\bxi_1;U))-\mu_{j}(H'(\bxi_2;U))=(2\rho+O(\rho^{1/3}))(r(\bxi_1)-r(\bxi_2))+O(\rho^{-4Mp}).
\enes
Finally, if $(\bxi_1)_\GV=(\bxi_2)_\GV$, $(\bxi_1)'_\GV=(\bxi_2)'_\GV$, and $U$ contains the interval
$I$ joining $\bxi_1$ and $\bxi_2$, then
\bees
\mu_j(H'(\bxi_1;U))-\mu_{j}(H'(\bxi_2;U))=(2\rho+O(\rho^{1/3}))(r(\bxi_1)-r(\bxi_2)).
\enes
\enl
\bep
The last statement follows directly from lemma \ref{newdependenceonr}.
Assume now that $(\bxi_1)_\GV=(\bxi_2)_\GV$ and $(\bxi_1)'_\GV=(\bxi_2)'_\GV$. Denote $U_1:=U$,
$U_2:=U\cup I$, where $I$ is the interval joining $\bxi_1$ and $\bxi_2$, and
$l=l(\bxi_1;U_1,U_2)=l(\bxi_2;U_1,U_2)$ (the last equality follows from lemma \ref{newU1}). Then
lemma \ref{newU1} implies that
\bees
|\mu_j(H'(\bxi_m;U_1))-\mu_{j+l}(H'(\bxi_m;U_2))|\ll\rho^{-4Mp},\qquad m=1,2.
\enes
Now the statement follows from
lemma \ref{newdependenceonr}. If $r(\bxi_1)=r(\bxi_2)$,
the statement follows in a similar way from lemmas \ref{newU1} and
\ref{newdependenceonrest}. In the general case, we join $\bxi_1$ and $\bxi_2$ by a path consisting of intervals
falling into either of the two cases above.
\enp

Now we will `globalize' the local mappings constructed so far, in other words,
we will define the function $j:\BUps(\bxi)\to\N$ mentioned in the remark \ref{newremark}.
Let $\bxi\in\Xi(\GV)$ and $\{\bxi\}=\bk$.
Then the set of eigenvalues $\{\mu_j(H'_0(\bxi))\}$ of the unperturbed operator $H'_0(\bxi)$
coincides with the set $\{|\BF\boldeta|^2,\, \boldeta\in\BUps(\bxi)\}$. Let us label all
numbers $\{|\BF\boldeta|^2,\, \boldeta\in\BUps(\bxi)\}$ in the increasing order; if there are two different vectors
$\boldeta, \tilde\boldeta\in\BUps(\bxi)$ with $|\BF\boldeta|^2=|\BF\tilde\boldeta|^2$, we
label them in the lexicographic order of their coordinates (i.e. we put $\boldeta$ before $\tilde\boldeta$
if either $\eta_1<\tilde\eta_1$, or $\eta_1=\tilde\eta_1$ and $\eta_2<\tilde\eta_2$, etc.)
Then to each point $\boldeta\in\BUps(\bxi)$
we have put into correspondence a natural number $j=j(\boldeta)$
such that
\bee\label{newindices}
|\BF\boldeta|^2=\mu_j(H'_0(\bxi)).
\ene
Next we define
\bees
\tilde g(\boldeta):=\mu_{j(\boldeta)}(H'(\bxi)).
\enes
This mapping is well-defined and satisfies the following obvious property:
$|\tilde g(\boldeta)-|\BF\boldeta|^2|\le v$ (recall that $v=||V||_{\infty}$).

The problem with the mapping $\tilde g$ defined in this way is that we cannot apply lemma \ref{newdependenceonr} to it, since
lemma  \ref{newdependenceonr} treats not the eigenvalues of $H'(\bxi)$, but the eigenvalues of $H'(\bxi;U)$ with the set $U$ containing
certain intervals perpendicular to $\GV$. Thus, we need to introduce a different definition which takes care of lemma
\ref{newdependenceonr} and at the same time is reasonably canonical.

Let $\bxi\in\Xi_2(\GV)$. Denote
\bees
X=X(\bxi):=\{\boldeta\in\Xi_2(\GV):
\boldeta_{\GV}=\bxi_{\GV},\,\boldeta'_{\GV}=\bxi'_{\GV}\}.
\enes
Simple geometry implies that $X(\bxi)$ is an interval of length $\ll \rho^{-1}$.
Similarly to our actions when we were defining $\tilde g$, we notice
that the set of eigenvalues $\{\mu_j(H'_0(\bxi;X(\bxi)))\}$
coincides with the set $\{|\BF\boldeta|^2,\, \boldeta\in\BUps(\bxi;X)\}$. Let us label all
numbers $\{|\BF\boldeta|^2,\, \boldeta\in\BUps(\bxi;X)\}$ in the increasing order; if there are two different vectors
$\boldeta_1, \boldeta_2\in\BUps(\bxi;X)$ with $|\BF\boldeta_1|^2=|\BF\boldeta_2|^2$, we
label them in the lexicographic order of their coordinates. Then to the point $\bxi$ we have put into correspondence a natural number $i=i(\bxi)$
such that
\bee\label{newindicesi}
|\BF\bxi|^2=\mu_i(H'_0(\bxi;X)).
\ene
Next we define $g(\bxi):=\mu_{i(\bxi)}(H'(\bxi;X))$.
This mapping is well-defined and satisfies the property
$|g(\bxi)-|\BF\bxi|^2|\le v$.

\bel\label{tildeg}
Let $\bxi\in\Xi_2(\GV)\cap\CA_1$. Then
the following properties are satisfied:

(i) $|g(\bxi)-\tilde g(\bxi)|\ll \rho^{-4Mp}$;

(ii) $g(\bxi)=r^2+s$, where $r=r(\bxi)$ and
$s=s(\bxi)=s(\bxi_{\GV},\bxi'_{\GV},r)$ is a function which
smoothly depends on $r$ with $\frac{\partial s}{\partial r}=O(\rho^{1/3})$.
\enl
\bep
Let us prove the first statement. First, we notice that the difference $i(\bxi)-j(\bxi)$
is equal to the number of points $\boldeta\in(\BUps(\bxi;X(\bxi))\setminus\BUps(\bxi)$
satisfying $|\BF\boldeta|^2<\lambda$. Now the statement follows from
lemma \ref{newisometry} and remark \ref{rem:newisometry0}.

Let us now prove the second statement. Suppose, $\bxi_1\in X(\bxi)$ and $\boldeta\in\BUps(\bxi;X)$.
Then
\bees
\boldeta_1:=\boldeta+(\bxi_1-\bxi)\in\BUps(\bxi_1;X).
\enes
Note that $(\bxi)_{\GV}=(\bxi_1)_{\GV}$ and therefore $(\boldeta)_{\GV}=(\boldeta_1)_{\GV}$.
Let us assume that
\bee\label{newxieta}
|\BF\bxi|\ge|\BF\boldeta|
\ene
and prove that this implies
$|\BF\bxi_1|\ge|\BF\boldeta_1|$. Indeed, there are two possible cases:

(i) $\bxi-\boldeta\in\GV$. Then, since $\bxi_1-\boldeta_1=\bxi-\boldeta\in\GV$, we have:
\bees
\bes
&|\BF\bxi_1|^2-|\BF\boldeta_1|^2=|\BF(\bxi_1)_{\GV}|^2-|\BF(\boldeta_1)_{\GV}|^2\\
&=|\BF(\bxi)_{\GV}|^2-|\BF(\boldeta)_{\GV}|^2=|\BF\bxi|^2-|\BF\boldeta|^2\ge 0
\end{split}
\enes

(ii) $\bxi-\boldeta\not\in\GV$. Then, in the same way we have proved estimate \eqref{newBUps1}, using
lemma \ref{changedXi3} we can show that $||\BF\boldeta|^2-\la|\gg K^2$. But since $\bxi\in\Xi_2(\GV)\subset\CA$,
we have $||\BF\bxi|^2-\la|\ll 1$. Thus, \eqref{newxieta} implies $\la-|\BF\boldeta|^2\gg K^2$. Since
$|\boldeta_1-\boldeta|=|\bxi_1-\bxi|\ll\rho^{-1}$, we have $\la-|\BF\boldeta_1|^2\gg K^2$. Since $\bxi_1\in\CA$,
this implies $|\BF\bxi_1|\ge|\BF\boldeta_1|$.

Thus, we have proved that the inequality $|\BF\bxi_1|\ge|\BF\boldeta_1|$ is equivalent to
$|\BF\bxi|\ge|\BF\boldeta|$. This implies that $i(\bxi_1)=i(\bxi)$, where $i$ is the function defined by
\eqref{newindicesi}. Now the second statement of lemma follows from lemma \ref{newdependenceonr}.
\enp
This lemma shows that the mapping $g$ behaves in a nice way as a function of $r$.
Unfortunately, the dependence on other variables is not quite so nice.
In fact, this mapping is not continuous, even modulo $O(\rho^{-4Mp})$, because
the functions $i(\bxi)$ are not continuous; moreover, a little
thought shows that we cannot, in general, define the mapping $g$ to have all properties
formulated in the introduction and be continuous at the same time. Indeed, if the function
$i=i(\bxi)$ were continuous, it would necessary have been a constant. Thus, the function $i$
has discontinuities, and the function $g$ may have discontinuities at the same points as $i$.
However,
lemmas \ref{newisometry}, \ref{newdependenceonr}, and \ref{newdependenceonrest}
show that for each small neighbourhood $U$ in the space
of quasi-momenta we can find a family of representatives of the functions $g$ which is
`almost' smooth. Namely, the following statement holds:
\bel\label{newpartialofs}
Let $I=[\ba,\bb]\subset\Xi_2(\GV)\cap\CA_1$ be a straight interval
of length $L:=|\bb-\ba|\ll\rho^{-1}$. 
Then there exists an integer vector $\bn$ such that
$|g(\bb+\bn)-g(\ba)|\ll L\rho+\rho^{-4Mp}$. Moreover, suppose in addition that there exists
an integer vector $\bm\ne 0$ such that the interval $I+\bm$ is entirely inside $\Xi_2(\GV)\cap\CA_1$.
Then there exist two different integer vectors $\bn_1$ and $\bn_2$ such that
$|g(\bb+\bn_1)-g(\ba)|\ll L\rho+\rho^{-4Mp}$ and
$|g(\bb+\bn_2)-g(\ba+\bm)|\ll L\rho+\rho^{-4Mp}$.
\enl
\bep
Lemmas \ref{newisometry} and \ref{tildeg}
show that $g(\ba)=\la_l(\ba;I)+O(\rho^{-4Mp})$
for some integer $l$. Lemma \ref{newsummarize}
now implies that
\bee\label{newLrho}
|\la_l(\ba;I)-\la_l(\bb;I)|\ll L\rho+\rho^{-4Mp}.
\ene
Once again using
lemma \ref{newisometry}, we deduce that $\la_l(\bb;I)=g(\boldeta)+O(\rho^{-4Mp})$
for some $\boldeta\in\BUps(\bb;I)$; in particular, we have $\boldeta=\bb+\bn$ for some integer vector $\bn$.
This proves the first statement.

Let us prove the second statement. Conditions of lemma imply that $g(\ba)=
\la_j(\ba;I)+O(\rho^{-4Mp})$ and $g(\ba+\bm)=\la_l(\ba+\bm;I+\bm)+O(\rho^{-4Mp})$
for some integers $j,l$. Moreover, if $\ba+\bm\in\BUps(\ba;I)$ (so that
$\BUps(\ba;I)=\BUps(\ba+\bm;I+\bm)$), then, since $\bm\ne 0$ we have
\bee\label{newjl}
j\ne l.
\ene
Lemma \ref{newsummarize}
now implies that together with \eqref{newLrho} we have
\bee\label{newLrho1}
|\la_l(\ba+\bm;I+\bm)-\la_l(\bb+\bm;I+\bm)|\ll L\rho+\rho^{-4Mp}.
\ene
Once again using
lemma \ref{newisometry}, we deduce that $\la_j(\bb;I)=g(\boldeta_1)+O(\rho^{-4Mp})$
and $\la_l(\bb+\bm;I+\bm)=g(\boldeta_2)+O(\rho^{-4Mp})$
for different points $\boldeta_1\in\BUps(\bb;I)$ and $\boldeta_2\in\BUps(\bb+\bm;I+\bm)$
(the points $\boldeta_1$ and $\boldeta_2$ are different because of \eqref{newjl}).
In particular, these inclusions imply $\boldeta_1-\bb\in\Z^d$ and $\boldeta_2-\bb\in \bm+\Z^d=\Z^d$.
This proves the second statement.
\enp

Thus, we have proved the following lemma, which
is the main result of this section:

\bel\label{mainlemmasection6}
Let $\GV\in\CV(n)$. Then there are two mappings
$\tilde g,g:\Xi_2(\GV)\to\R$ which satisfy the following properties:

(i) $\tilde g(\bxi)$ is an eigenvalue of $ P(\GV) H'(\bk)P(\GV)$
with $\{\bxi\}=\bk$. All
eigenvalues of $ P(\GV) H'(\bk) P(\GV)$ inside $J$ are in
the image of $\tilde g$.

(ii) If $\bxi\in\CA_1$, then
$|\tilde g(\bxi)-g(\bxi)|\le C\rho^{-4Mp}$ and $|g(\bxi)-|\BF\bxi|^2|\le 2v$.

(iii) $g(\bxi)=r^2+s(\bxi)$
with $r:=|\BF\bxi_\GV^{\perp}|$ and $\frac{\partial s}{\partial r}=O(\rho^{1/3})$.
\enl
\bep
The only statement which has not been checked so far is that
$|g(\bxi)-|\BF\bxi|^2|\le 2v$.
This follows immediately from the second statement of this lemma together with the
inequality $|\tilde g(\bxi)-|\BF\bxi|^2|\le v$ and
$|\tilde g(\bxi)-g(\bxi)|\le C\rho^{-4Mp}$.
\enp

Now, we can put together the results of the previous sections

\bet\label{maintheorem1}
Suppose, 
$R$ is sufficiently large, all conditions before lemma \ref{changedXi} are satisfied, and $R<\rho^{pd^{-1}/2}$.
Then there are two mappings $ f,g:\CA\to\R$
which satisfy the following properties:

(i) $f(\bxi)$ is an eigenvalue of $H'(\bk)$ with
$\{\bxi\}=\bk$; $| f(\bxi)-|\BF\bxi|^2|\le 2v$. $f$ is an injection (if we count all eigenvalues
with multiplicities) and all
eigenvalues of $H'(\bk)$ inside $J$ are in the image of $f$.

(ii) If $\bxi\in\CA_1$, then $| f(\bxi)-g(\bxi)|\le C\rho^{-4Mp}$.

(iii) We can decompose the domain of $g$ into the disjoint union:
$\CA=\CB\cup\bigcup_{n=1}^{d-1}\bigcup_{\GV\in\CV(6MR,n)}\Xi_2(\GV)$.
For any $\bxi\in\CB$
\bee\label{eq1:maintheorem1}
\bes
&g(\bxi)=|\BF\bxi|^2\\
&+\sum_{j=1}^{2M} \sum_{\boldeta_1,\dots,\boldeta_j\in \T'_M}
\sum_{2\le n_1+\dots+n_j\le 2M}C_{n_1,\dots,n_j}
\lu\bxi,\BG\boldeta_1\ru^{-n_1}\dots\lu\bxi,\BG\boldeta_{j}\ru^{-n_j}.
\end{split}
\ene
For any $\bxi\in\Xi_2(\GV)$
\bee\label{eq2:maintheorem1}
g(\bxi)=r^2(\bxi)+s(\bxi),
\ene
with
$r(\bxi)=|\BF\bxi_\GV^{\perp}|$, $s(\bxi)=s(r,\bxi_\GV,\bxi'_\GV)$,
$\frac{\partial s}{\partial r}=O(\rho^{1/3})$.
\ent
\bep
We have described the construction of the mapping $f$ at the end of section 5.
Mapping $g$ is constructed in sections 6 and 7.
\enp
Let us formulate an important property of the mapping $g$, which is a global
version of lemma \ref{newpartialofs}.
\bel\label{partialofs2}
Let $I=[\ba,\bb]\subset\CA_1$ be a straight interval
of length $L:=|\bb-\ba|\ll\rho^{-1}$.
Then there exists an integer vector $\bn$ such that
$|g(\bb+\bn)-g(\ba)|\ll L\rho+\rho^{-4Mp+d}$.
Moreover, suppose in addition that there exists
an integer vector $\bm\ne 0$ such that the interval $I+\bm$ is entirely inside $\CA_1$.
Then there exist two different integer vectors $\bn_1$ and $\bn_2$ such that
$|g(\bb+\bn_1)-g(\ba)|\ll L\rho+\rho^{-4Mp+d}$ and
$|g(\bb+\bn_2)-g(\ba+\bm)|\ll L\rho+\rho^{-4Mp+d}$.
\enl
\bep
Let us parametrise the interval $I$ so that
\bees
I=\{\bxi(t),t\in [t_{min},t_{max}]\},
\enes
$\ba=\bxi(t_{min})$, $\bb=\bxi(t_{max})$.
Let us prove the first statement.
If the interval $\bxi(t)$ lies entirely inside $\CB$, then the statement is obvious since
the length of the gradient of $g$ inside $\CB$ is $\ll\rho$, so we can take $\bn=0$.
If the interval $\bxi(t)$ lies entirely inside $\Xi_2(\GV)$ for $\GV\in\CV(n)$, the
statement has been proved in lemma \ref{newpartialofs}.
Consider the general case. Denote by $y_j(\bk):=\mu_j(H'(\bk))$ the $j$-th eigenvalue
of $H'(\bk)$.
Then the definition of the mapping $f$ implies that if $y_j(\bk)\in J$, then
\bees
y_j(\bk)=f(\bn+\bk)
\enes
for some integer vector $\bn$; the opposite is also true, namely
if $f(\bn+\bk)\in J$, then $f(\bn+\bk)=y_j(\bk)$ for some $j$.
Notice also that for each $j$ the function $y_j$ is continuous.

Now let us return to the study of the behaviour of the function $g(\bxi(t))$. Suppose for
definiteness that $\bxi(t_{min})\in\CB$. Then, as we mentioned in the beginning of proof,
since the gradient of $g$ has length $\ll \rho$, we have
$|g(\bxi(t))-g(\bxi(t_{min}))|\ll |\bxi(t)-\bxi(t_{min})|\rho$
as soon as $\bxi(t)$ stays inside $\CB$.
Suppose that $t_1$ is the point at which $\bxi(t)$ crosses the boundary of $\CB$. Then
\bee\label{eq0:partialofs2}
|g(\bxi(t_1-0))-g(\bxi(t_{min}))|\ll |\bxi(t)-\bxi(t_{min})|\rho
\ene
According to the relationship between the mapping $f$ and functions $y_j$ stated above,
there exists an index $j$ such that
$f(\bxi(t_{1}-0))=y_j(\{\bxi(t_{1}-0)\})$ (recall that if $\bxi=\bn+\bk$, then
we call $\bk=\{\bxi\}$ the fractional part of $\bxi$). Since $y_j$ is continuous function,
$y_j(\{\bxi(t_{1}-0)\})=y_j(\{\bxi(t_{1}+0)\})$. Using the relationship between
the mapping $f$ and functions $y_j$ again, we deduce that
there exists an integer vector $\bn_1$ such that
$y_j(\{\bxi(t_{1}+0))\})=f(\bxi(t_{1}+0)+\bn_1)$. Property (ii) of theorem
\ref{maintheorem1} implies that
$f(\bxi(t_{1}-0))=g(\bxi(t_{1}-0))+O(\rho^{-4Mp})$ and, similarly,
$f(\bxi(t_{1}+0)+\bn_1)=g(\bxi(t_{1}+0)+\bn_1)+O(\rho^{-4Mp})$. All these estimates imply
\bee\label{eq1:partialofs2}
g(\bxi(t_{1}+0)+\bn_1)=g(\bxi(t_{1}-0))+O(\rho^{-4Mp}).
\ene
Since
$\bxi(t_1+0)+\bn_1\in\CA_1$, we have either $\bxi(t_1+0)+\bn_1\in\Xi_2(\GV)$ or
$\bxi(t_1+0)+\bn_1\in\CB$. Assume the former. Let $t_2>t_1$ be the smallest value of $t$
at which $\bxi(t)+\bn_1$ 
crosses the boundary of
$\Xi_2(\GV)$. Then lemma \ref{newpartialofs} implies that thee exists an integer vector
$\bn_2$ such that
\bee\label{eq2:partialofs2}
|g(\bxi(t_2-0)+\bn_2)-g(\bxi(t_1+0)+\bn_1)|\ll |\bxi(t_2)-\bxi(t_1)|\rho+O(\rho^{-4Mp}).
\ene
Now repeating the argument we have already used at the moment $t_1$, we deduce that there
exists an integer $\bn_3$ such that
\bee\label{eq3:partialofs2}
g(\bxi(t_{2}+0)+\bn_3)=g(\bxi(t_{2}-0)+\bn_2)+O(\rho^{-4Mp}).
\ene
Now we repeat the process
and increase $t$ beginning from $t_2$ until we hit another piece of boundary of some
$\Xi_2(\GW)$ at $t=t_3$, etc. The shift of the function $g$ at each of the points $t_j$ of
hitting the boundary is $O(\rho^{-4Mp})$. The number of such points is $\ll \rho^{d}$, since
for each fixed integer vector $\bm$ the number of intersections of the interval
$(\bxi(t)+\bm)$ ($t\in[t_{min},t_{max}]$) with the boundaries of all sets $\Xi_2(\GV)$
is finite, and the number of possible integer vectors $\bm$ allowed here is $\ll \rho^{d}$
(obviously, the length of each of these integer vectors is $\ll\rho$). Now formulas
\eqref{eq0:partialofs2}--\eqref{eq3:partialofs2} lead to the desired result.

The proof of the second statement is similar and can be derived from the proof of the first
statement in the same way as the proof of the second part of lemma \ref{newpartialofs}
follows from the proof of the first part of that lemma.
\enp

Now it remains to extend the above results to the `full' operator $H(\bk)$.

\bec\label{maincorollary1}
For
each natural $N$ there exist mappings $f,g:\CA\to\R$ which
satisfy the following properties:

(i) $f(\bxi)$ is an eigenvalue of $H(\bk)$ with $\{\bxi\}=\bk$;
$|f(\bxi)-|\BF\bxi|^2|\le 2v$. $f$ is an injection (if we count all eigenvalues with multiplicities)
and all eigenvalues of $H(\bk)$ inside
$J$ are in the image of $f$.

(ii) If $\bxi\in\CA_1$, then $|f(\bxi)-g(\bxi)|\le \rho^{-N}$.

(iii) We can decompose the domain of $g$ into the disjoint union:
$\CA=\CB\cup\bigcup_{n=1}^{d-1}\bigcup_{\GV\in\CV(n)}\Xi_2(\GV)$.
For any $\bxi\in\CB_{\rho}$
\bee\label{eq1:maincorollary1}
\bes
&g(\bxi)=|\BF\bxi|^2\\
&+\sum_{j=1}^{2M} \sum_{\boldeta_1,\dots,\boldeta_j\in \T'_M}
\sum_{2\le n_1+\dots+n_j\le 2M}C_{n_1,\dots,n_j}
\lu\bxi,\BG\boldeta_1\ru^{-n_1}\dots\lu\bxi,\BG\boldeta_{j}\ru^{-n_j}
\end{split}
\ene
with $M=[(N+d)(4p)^{-1}]+1$.
For any $\bxi\in(\Xi_2(\GV)\cap\CA_1)$
\bee\label{eq2:maincorollary1}
g(\bxi)=r^2+s(\bxi),
\ene
with
$r:=|\BF\bxi_\GV^{\perp}|$, $s(\bxi)=s(r,\bxi_\GV,\bxi'_\GV)$ and
$\frac{\partial s}{\partial r}=O(\rho^{1/3})$.

(iv) Let $I=[\ba,\bb]\subset\CA_1$ be a straight interval
of length $L:=|\bb-\ba|\ll\rho^{-1}$.
Then there exists an integer vector $\bn$ such that
$|g(\bb+\bn)-g(\ba)|\ll L\rho+\rho^{-N}$. Moreover, suppose $\bm\ne 0$
is a given integer vector such that the interval $I+\bm$ is entirely inside $\CA_1$.
Then there exist two different integer vectors $\bn_1$ and $\bn_2$ such that
$|g(\bb+\bn_1)-g(\ba)|\ll L\rho+\rho^{-N}$ and
$|g(\bb+\bn_2)-g(\ba+\bm)|\ll L\rho+\rho^{-N}$.
\enc
\bep
We use theorem
\ref{maintheorem1} for the operator $H'(\bk)$ with $M=[(N+d)(4p)^{-1}]+1$.
Estimate \eqref{truncate1} implies that
$|\mu_j(H(\bk))-\mu_j(H'(\bk))|<\rho^{-N-1}$, so that all the
required properties are fulfilled.
\enp
\ber The function $f$ is
not necessarily continuous.
\enr

Before we continue with the proof of the Bethe-Sommerfeld
conjecture, let us formulate a theorem which immediately follows
from our results, just to illustrate their usefulness.
Recall that by $N(\la)$ we have denoted
the integrated density of states of the operator \eqref{Schroedinger1} defined in
\eqref{densityofstates}.
\bet\label{clusters}
For each natural $n$ we have the following
estimate: $N(\la+\la^{-n})-N(\la-\la^{-n})=O(\la^{d/2-n-1})$.
\ent
\bep We use corollary \ref{maincorollary1} with $N=2n+1$. Then
\bee\label{eq:density}
\bes
&N(\la+\la^{-n})-N(\la-\la^{-n})=\volume(f^{-1}([\la-\la^{-n},\la+\la^{-n}]))\\
&\le \volume(g^{-1}([\la-2\la^{-n},\la+2\la^{-n}]))=O(\la^{d/2-n-1}),
\end{split}
\ene
the last equality being an easy geometric exercise (which will anyway be established
in the next section).
\enp
\ber
As it was pointed out to the author by Yu.Karpeshina, it seems possible that using the results of
this paper (including the results from the next section) one can prove the following lower bound:
\bees
N(\la+\ep)-N(\la)\gg\ep\la^{(d-2)/2},
\enes
uniformly over $\ep<1$ as $\la\to\infty$ (in particular, $\ep$ does not have to be a negative power
of $\lambda$). We will not prove this estimate in our paper though.
\enr

\section{Proof of the Bethe-Sommerfeld conjecture}
Throughout this section we keep the notation from the previous section.
Without specific mentioning, we always assume
that $\rho$ is sufficiently large; the precise value of the power $N$ will
be chosen later. In what follows, it will be convenient to consider a slightly
slimmed down resonance set. Namely, we introduce the set
\bees
\tilde\CB:=\{\bxi\in\CA_1: |\bxi_{\GU}|>\rho^{1/2},\forall \GU\in\CV(1)\}.
\enes
In other words, $\tilde\CB$ consists of all points $\bxi\in\CA_1$ the $\BF$-projections of
which to all vectors $\boldeta\in\T'_{6M}$ has $\BF$-length larger than $\rho^{1/2}$.
Obviously, $\tilde\CB\subset \CB$. We also denote $\tilde\CD:=\CA_1\setminus\tilde\CB$.

Now we will study various properties of mappings $f$ and $g$.
We begin with the function $g$.

For each positive $\de\le v$
denote $\CA(\de)$, $\CB(\de)$, and $\CD(\de)$ to be intersections
of $g^{-1}([\rho^2-\de,\rho^2+\de])$ with
$\CA_1$, $\tilde\CB$, and $\tilde\CD$ correspondingly.
The following is a simple geometry:
\bel\label{volumeCA5}
The following estimates hold:
\bee\label{volumeCA}
\volume(\CA(\de))\asymp \rho^{d-2}\de,
\ene
\bee\label{volumeCB}
\volume(\CB(\de))\asymp \rho^{d-2}\de,
\ene
and
\bee\label{volumeCD}
\volume(\CD(\de))\ll \rho^{(3d-7)/3}\de.
\ene
\enl
\bep
Let $\bxi=r\bxi'\in\CB$, $|\BF\bxi'|=1$ Then the definition of $g$ implies that
\bee\label{partialg}
\frac{\partial g}{\partial r}\asymp \rho
\ene
uniformly over $\bxi'$. Therefore, for each fixed $\bxi'$ the intersection of
$g^{-1}([\rho^2-\de,\rho^2+\de])$ with the set $\{r\bxi',\, r>0\}$ is an interval
of length $\asymp \de\rho^{-1}$. Integrating over $\bxi'$, we obtain \eqref{volumeCB}.
Estimate \eqref{volumeCD} is obtained in a similar way, only for $\bxi\in\Xi(\GV)$ we put
$r:=|\BF\bxi_\GV^{\perp}|$. Then the estimate \eqref{partialg} is
still valid.
Let $\boldeta\in\T'_M$. Then \eqref{partialg} implies that the set of all points $\bxi\in\CA(\de)$
such that the $\BF$-projection of $\bxi$ onto $\boldeta$ has $\BF$-length
smaller than $\rho^{1/2}$ has volume $O(\rho^{(2d-5)/2}\de)$. Since the number of elements
in $\T'_M$ is $O(R^d)=O(\rho^{p/2})$, we have
\bees
\volume(\CD(\de))\ll \rho^{(2d-5+p)/2}\de\ll\rho^{(3d-7)/3}\de,
\enes
since $p<1/3$.
Finally, \eqref{volumeCA} is the sum of \eqref{volumeCB} and \eqref{volumeCD}.
\enp
\ber
Putting $\de=2\la^{-n}$ in \eqref{volumeCA}, we establish the last equality in \eqref{eq:density}.
\enr

The next estimate is more subtle.

\bel\label{volumeCB1}
Let $d\ge 3$. Then for large enough $\rho$ and $\de<\rho^{-1}$ the following estimate holds uniformly
over $\ba\in\R^d$ with $|\ba|>1$:
\bee\label{eq:volumeCB1}
\volume\bigl(\CB(\de)\cap (\CB(\de)+\ba)\bigr)\ll (\de^2\rho^{d-3}+\de\rho^{-d}).
\ene
If $d=2$, similar estimate holds with $\de^{3/2}+\de\rho^{-2}$ in the RHS.
\enl
\bep
After making the substitution $\bnu=\BF\bxi$, 
the function $g$
in new coordinates will have the form $h(\bnu)=|\bnu|^2+G(\bnu)$, with
\bee\label{G}
G(\bnu)=O(|\bnu|^{-1/2})
\ene
and
\bee\label{partialG}
\frac{\partial G}{\partial\nu_j}\le C_2|\bnu|^{-1}
\ene
for all $j=1,\dots,d$,
provided $\bnu\in\BF\tilde\CB$; these estimates follow from lemma \ref{eigenvalues1}.
We need to estimate the volume of the set
\bee
\bes
\CX=\{\bnu\in\bigl(\BF(\CB)\cap(\BF(\CB)+\BF\ba)\bigr),\,& h(\bnu)\in [\rho^2-\de,\rho^2+\de],\\ &h(\bnu-\BF\ba)\in [\rho^2-\de,\rho^2+\de]\}.
\end{split}
\ene
Indeed, we have $\CX=\BF\Bigl(\CB(\de)\cap \bigl(\CB(\de)+\ba\bigr)\Bigr)$, so
the volume of $\CX$ equals $\det\BF$ times the volume of the set
$\CB(\de)\cap \bigl(\CB(\de)+\ba\bigr)$.
Denote $\bb:=\BF\ba$.
First, we will estimate the $2$-dimensional area of the intersection of $\CX$ with arbitrary
$2$-dimensional plane containing the origin and vector $\bb$; the volume of $\CX$ then can
be obtained using the integration in cylindrical coordinates. So, let $\GV$ be any
$2$-dimensional plane containing the origin and $\bb$, and let us estimate
the area of $\CX_\GV:=\GV\cap\CX$.
Let us introduce cartesian coordinates in $\GV$ so that
$\bnu\in\GV$ has coordinates $(\nu_1,\nu_2)$
with $\nu_1$ going along $\bb$, and $\nu_2$ being orthogonal to $\bb$.
For any $\bnu\in\CX_\GV$ estimate \eqref{G} implies
\bees
h(\bnu)=\nu_1^2+\nu_2^2+O(\rho^{-1/2}),
\enes
and so
\bees
2\de\ge |h(\bnu)-h(\bnu-\bb)|=|\nu_1^2-(|\bb|-\nu_1)^2|+O(\rho^{-1/2}).
\enes
This implies that
\bee\label{wherenu1is}
\frac{|\bb|}{3}<\nu_1<\frac{2|\bb|}{3}
\ene
when $\rho$ is sufficiently
large, and therefore
\bee\label{partialg1}
\frac{\partial h(\bnu)}{\partial\nu_1}\gg |\bb|
\ene
whenever $\bnu\in\CX_\GV$.
Thus, for any fixed $t\in\R$,
the intersection of the line $\nu_2=t$ with $\CX_\GV$ is an interval
of length $\ll |\bb|^{-1}\de$.

Let us cut $\CX_\GV$ into two parts: $\CX_\GV=\CX_\GV^1\cup\CX_\GV^2$ with
$\CX_\GV^1:=\{\bnu\in\CX_\GV,\,|\nu_2|\le 2C_2\rho^{-1}\}$,
$\CX_\GV^2=\CX_\GV\setminus\CX_\GV^1$, and estimate the volumes of these sets ($C_2$ is the
constant from \eqref{partialG}). A simple geometrical argument shows that if $\CX_\GV^1$
is nonempty, then $|\bb|\gg\rho$. This, together with the remark after \eqref{partialg1},
implies that the area of $\CX_\GV^1$ is $\ll \rho^{-2}\de$.
Now we define the `rotated' set $\CX^1$ which consists of the points from $\CX$
which belong to $\CX_\GV^1$ for some $\GV$. Computing the volume of this set using
integration in the cylindrical coordinates, we obtain
\bee\label{CX1}
\volume(\CX^1)\ll \rho^{-d}\de.
\ene

Now consider $\CX_\GV^2$. Let us decompose $\CX_\GV^2=\overline{\CX_\GV^2}\cup\underline{\CX_\GV^2}$, where
\bees
\overline{\CX_\GV^2}=\{\bnu\in\CX_\GV^2:\nu_2>0\}
\enes
and
\bees
\underline{\CX_\GV^2}=\{\bnu\in\CX_\GV^2:\nu_2<0\}.
\enes
Notice that for any $\bnu\in\overline\CX_\GV^2$, formula \eqref{partialG} implies
\bee\label{partialg2}
\frac{\partial h(\bnu)}{\partial\nu_2}\gg \nu_2.
\ene
Let $\bnu^l=(\nu^l_1,\nu^l_2)$ be the point in
the closure of $\overline{\CX_\GV^2}$
with the smallest value of the first coordinate:
$\nu^l_1\le\nu_1$ for any $\bnu=(\nu_1,\nu_2)\in\overline{\CX_\GV^2}$.
Analogously, we define $\bnu^r$
to be the point in the closure of $\overline{\CX_\GV^2}$
with the largest first coordinate,
$\bnu^t$ the point with the largest second coordinate, and $\bnu^b$ the point
with the smallest second coordinate (see Figure \ref{fig:9} for an illustration). Note that $\nu^t\ll\rho$.
\begin{figure}[!hbt]
\begin{center}
\framebox[0.8\textwidth]{\includegraphics[width=0.60\textwidth]{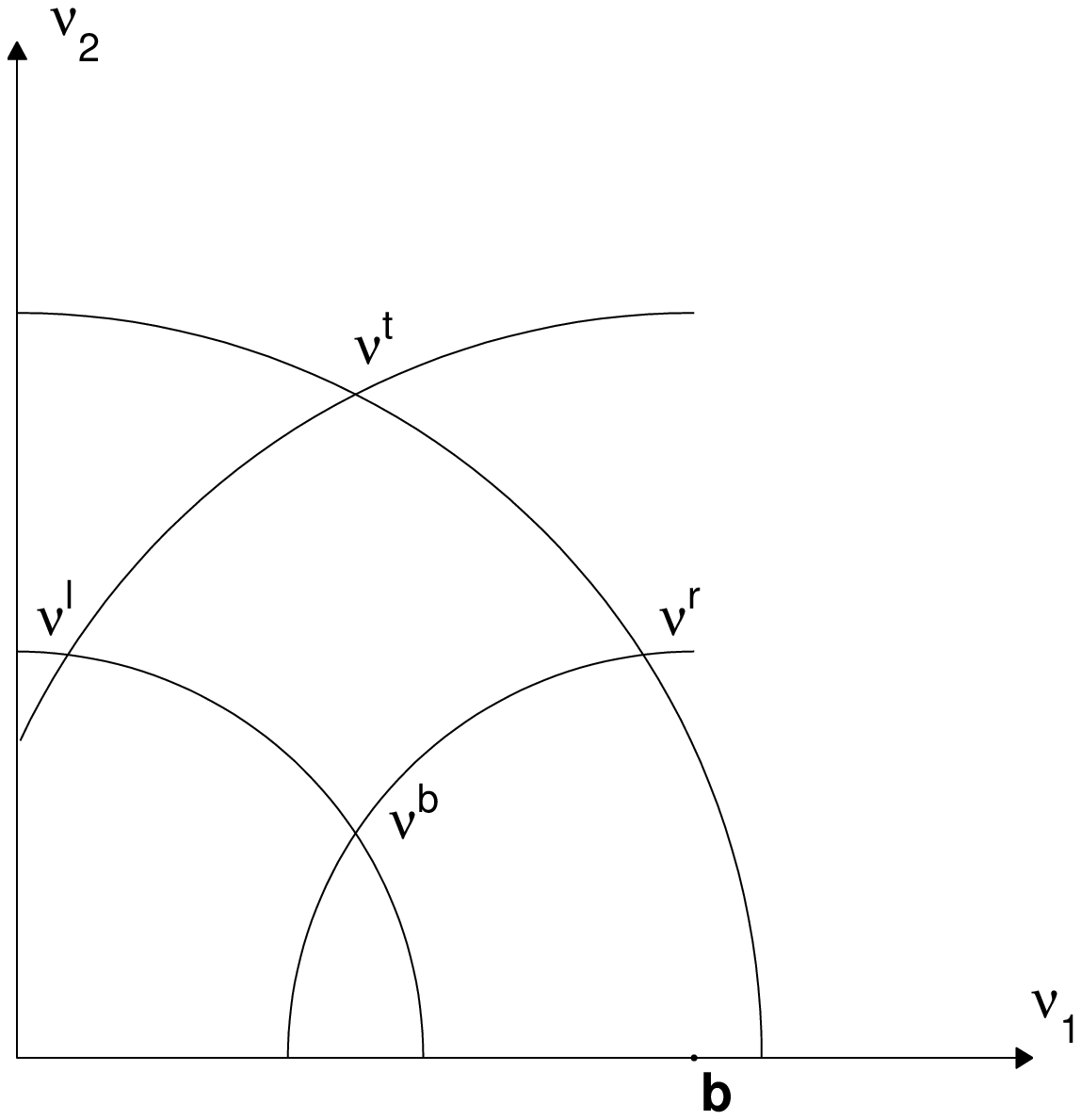}}
\caption{The set $\overline{\CX_\GV^2}$ (the area bounded by four arcs)\label{fig:9}}
\end{center}
\end{figure}

Let us prove that
\bee\label{width0}
\nu_1^r-\nu^l_1\ll \de.
\ene

Indeed, suppose first that $\nu^r_2\ge \nu^l_2$. Let $\bnu^{rl}:=(\nu^r_1,\nu^l_2)$. Then,
since $h$ is an increasing function of $\nu_2$ when $\nu_2>2C_2\rho^{-1}$, we have
$h(\bnu^{rl})\le h(\bnu^r)\le \rho^2+\de$. Therefore,
$h(\bnu^{rl})-h(\bnu^l)\le 2\de$. Estimate \eqref{partialg1} then
implies \eqref{width0}.

Suppose now that $\nu^r_2\le \nu^l_2$. Let $\bnu^{lr}:=(\nu^l_1,\nu^r_2)$. Then
$h(\bnu^{lr}-\bb)\le h(\bnu^l-\bb)\le \rho^2+\de$.
Therefore,
$h(\bnu^{lr}-\bb)-h(\bnu^r-\bb)\le 2\de$. Now, \eqref{wherenu1is} and
\eqref{partialg1} imply \eqref{width0}.

Thus, we have estimated the width of $\CX_\GV^2$. Let us estimate its hight
(i.e. $\nu_2^t-\nu^b_2$). Let us assume that $\nu^t_1\ge\nu^b_1$; otherwise, we use the same
trick as in the previous paragraph and consider $h(\cdot-\bb)$ instead of $h$.
Let $\bnu^{bt}:=(\nu^b_1,\nu^t_2)$. Then
$h(\bnu^{bt})\le h(\bnu^t)\le \rho^2+\de$. Therefore,
$h(\bnu^{bt})-h(\bnu^b)\le 2\de$. Now, \eqref{partialg2} implies
\bee\label{hightd2}
(\nu_2^t)^2-(\nu_2^b)^2=2\int_{\nu_2^b}^{\nu_2^t}\nu_2d\nu_2
\ll \int_{\nu_2^b}^{\nu_2^t}\frac{\partial h}{\partial\nu_2}(\nu_1^b,\nu_2)d\nu_2
\le 2\de.
\ene
Therefore, we have the following estimate for the hight of $\CX_\GV^2$:
\bee\label{hight}
\nu_2^t-\nu_2^b\ll \frac{\de}{\nu_2^t+\nu_2^b}.
\ene
Now, we can estimate the volume of $\CX^2:=\CX\setminus\CX^1$ using estimates \eqref{width0}
and \eqref{hight}. The cylindrical integration produces the following:
\bee\label{CX2}
\volume(\CX^2)\ll \frac{\de^2}{\nu_2^t+\nu_2^b}(\nu_2^t)^{d-2}\le
\de^2 (\nu_2^t)^{d-3}\le \de^2\rho^{d-3}.
\ene
Equations \eqref{CX1} and \eqref{CX2} imply \eqref{eq:volumeCB1}. If $d=2$, we have
to notice that \eqref{hightd2} implies $\nu_2^t-\nu_2^b\ll \de^{1/2}$ and then use \eqref{CX1}
and \eqref{width0}.
\enp

As was mentioned already, the function $f$
is not necessarily continuous. We now give a sufficient condition for its continuity.
Recall that $v$ is the $L_{\infty}$-norm of the potential $V$.

\bel\label{continuousf}
Let $\bxi\in\CB(v)$ be a point of discontinuity of $f$. Then there is a non-zero
vector $\bn\in\Z^d$ such that
\bee\label{eq:continuousf}
|g(\bxi+\bn)-g(\bxi)|\le 2\rho^{-N}.
\ene
\enl
\bep
If $\bxi=\bm+\bk\in\CB(v)$ is a point of discontinuity of a bounded function $f$,
there exist two sequences $\{\bxi_j\}$
and $\{\tilde \bxi_j\}$ which both converge to $\bxi$, such that the limits
$\la(\bxi):=\lim f(\bxi_j)$
and $\tilde \la(\bxi):=\lim f(\tilde \bxi_j)$ exist and are different. Since the points
$f(\bxi_j)$ are eigenvalues of $H(\{\bxi_j\})$, the limit $\la$ is an eigenvalue of
$H(\bk)$ (it is well-known that the spectrum of $H(\bk)$ is continuously dependent on $\bk$).
The same argument implies that $\tilde\la$ is also an eigenvalue of $H(\bk)$. Since
$\la\ne\tilde \la$, at most one of these points can be equal to $f(\bxi)$.
Say, $\tilde\la\ne f(\bxi)$. But since $\tilde \la$ is inside $J$, it must belong to the
image of $f$, say $\tilde\la=f(\tilde\bxi)$, $\{\tilde\bxi\}=\{\bxi\}$. Thus,
$\tilde\bxi=\bxi+\bn$ with $\bn\in\Z^d$. Since the function $g$ is continuous in $\tilde\CB$,
$\lim g(\tilde\bxi_j)=g(\bxi)$, and so 
\bees
|g(\bxi)-\tilde\la|=\lim |g(\tilde\bxi_j)-f(\tilde \bxi_j)|\le \rho^{-N}.
\enes
But we also have
$|g(\tilde\bxi)-\tilde\la|=|g(\tilde\bxi)-f(\tilde\bxi)|\le \rho^{-N}$. The last two
inequalities imply \eqref{eq:continuousf}.
\enp
\bec\label{cor:continuousf} There is a constant $C_3$ with the following properties.
Let
\bees
I:=\{\bxi(t):\, t\in[t_{min},t_{max}]\}\subset \CB(v).
\enes
be a straight interval of length $L<\rho^{-1}\de$.
Suppose that there is a
point $t_0\in[t_{min},t_{max}]$ with the property that for each non-zero
$\bn\in\Z^d$ $g(\bxi(t_0)+\bn)$ is either outside the interval
\bees
[g(\bxi(t_0))-C_3\rho^{-N}-C_3\rho L,g(\bxi(t_0))+C_3\rho^{-N}+C_3\rho L]
\enes
or not defined. Then $f(\bxi(t))$ is a continuous function of $t$.
\enc
\bep
Suppose not. Then previous lemma implies that there is a point
$t_1\in[t_{min},t_{max}]$ and a non-zero vector $\bn\in\Z^d$
such that $|g(\bxi(t_1)+\bn)-g(\bxi(t_1))|\le 2\rho^{-N}$.
Since $|\bxi(t_1)-\bxi(t_0)|\le|\bxi(t_{max})-\bxi(t_{min})|\le L$, it follows that $(I+\bn)\subset \CA_1$, and now lemma \ref{partialofs2} implies that for two
different integer vectors $\bm_1$ and $\bm_2$ we have
$|g(\bxi(t_0)+\bm_1)-g(\bxi(t_1)+\bn)|\ll \rho L+\rho^{-N}$ and
$|g(\bxi(t_0)+\bm_2)-g(\bxi(t_1))|\ll \rho L+\rho^{-N}$.
Since $\bxi(t)\in\CB$ for all $t$ and the length of the gradient of $g$ is $\ll\rho$ in $\CB$,
we also have $|g(\bxi(t_1))-g(\bxi(t_0))|\ll \rho L$. Thus, we have
$|g(\bxi(t_0)+\bm_j)-g(\bxi(t_0))|\le C\rho^{-N}+C \rho L$ ($j=1,2$). Since at least one of
vectors $\bm_j$ is non-zero, this
contradicts the assumption of the corollary.
\enp
Now we are ready to prove the Bethe-Sommerfeld conjecture. Since in the two-dimensional
case it has been proved, we will assume that $d\ge 3$.
\bet\label{maintheorem3} Let $d\ge 3$.
Then all sufficiently large points $\la=\rho^2$ are
inside the spectrum of $H$. Moreover, there exists a positive constant
$c_4$ such that
for large enough $\rho$ the whole interval $[\rho^2-c_4\rho^{1-d},\rho^2+c_4\rho^{1-d}]$ lies
inside some spectral band.
\ent
\bep
Put $N=d$ in the corollary \ref{maincorollary1}.
Also put $\de=c_4\rho^{1-d}$
(the precise value of $c_4$ will be chosen later). For each unit vector $\boldeta\in\R^d$
we denote $I_{\boldeta}$ to be the intersection of $\{r\boldeta,\,r>0\}$ with $\CA(\de)$.
We will consider only vectors $\boldeta$ for which $I_{\boldeta}\subset\tilde\CB$.
As was mentioned in the proof of lemma \ref{volumeCA5},
the length $L$ of any interval $I_{\boldeta}$ satisfies
$L\asymp \de\rho^{-1}$. 
Let us prove
that $f$ is continuous on at least one of the intervals $I_{\boldeta}\subset\tilde\CB$.
Suppose this is not the case. Then corollary \ref{cor:continuousf} tells us
that for each point
$\bxi\in\CB(\de)$ there is a non-zero integer vector $\bn$ such that
\bee
|g(\bxi+\bn)-g(\bxi)|\le C_3(\rho^{-d}+\rho L)\ll(\rho^{-d}+\de).
\ene
Since $|g(\bxi)-\rho^2|\le\de$, this implies $|g(\bxi+\bn)-\rho^2|\le C_5(\rho^{-d}+\de)=:\de_1$,
and thus $\bxi+\bn\in\CA(\de_1)$; notice that $C_5>1$ and so $\de_1>\de$.
Therefore, each point $\bxi\in\CB(\de)$
also belongs to the set $\bigl(\CA(\de_1)-\bn\bigr)$
for a non-zero integer $\bn$; obviously,
$|\bn|\ll\rho$. In other words,
\bee\label{cover1}
\bes
\CB(\de) &\subset \bigcup_{\bn\in\Z^d\cap B(C\rho),\bn\ne 0}
\bigl(\CA(\de_1)-\bn\bigr)\\
&=\bigcup_{\bn\ne 0}\bigl(\CB(\de_1)-\bn\bigr)\cup
\bigcup_{\bn\ne 0}\bigl(\CD(\de_1)-\bn\bigr)
\end{split}
\ene
To proceed further, we need more notation. Denote $\CD_0(\de_1)$ to be the set
of all points $\bnu$ from $\CD(\de_1)$ for which there is no non-zero $\bn\in\Z^d$
satisfying $\bnu-\bn\in\CB(\de)$;  $\CD_1(\de_1)$ to be the set
of all points $\bnu$ from $\CD(\de_1)$ for which there is a unique non-zero $\bn\in\Z^d$
satisfying $\bnu-\bn\in\CB(\de)$; and $\CD_2(\de_1)$ to be the rest of the points from
$\CD(\de_1)$ (i.e. $\CD_2(\de_1)$ consists
of all points $\bnu$ from $\CD(\de_1)$ for which there exist at least two different   non-zero vectors $\bn_1,\bn_2\in\Z^d$ satisfying $\bnu-\bn_j\in\CB(\de)$).
Then a little thought shows that we can replace
$\CD(\de_1)$ by $\CD_1(\de_1)$ in the RHS of \eqref{cover1}.
Indeed, this is shown in the following lemma.
\bel\label{CD02}
The following formulae hold:
\bee
\CB(\de)\bigcap\Bigl(\bigcup_{\bn\ne 0}
\bigl(\CD_0(\de_1)-\bn\bigr)\Bigr)=\emptyset,
\ene
and
\bee
\Bigl(\bigcup_{\bn\ne 0}
\bigl(\CD_2(\de_1)-\bn\bigr)\Bigr)\subset \bigcup_{\bn\ne 0}\bigl(\CB(\de_1)-\bn\bigr)
\ene
\enl
\bep
The first formula is an immediate corollary of the definition of $\CD_0(\de_1)$.
Let us prove the second formula.
Suppose, $\bnu\in\CD_2(\de_1)$. Then there exist two
integer vectors, $\bn_1$ and $\bn_2$ such that $\bnu-\bn_j\in\CB(\de)$.
Let $\bm$ be an integer vector. Then $\bm$ is different
from either $\bn_1$ or $\bn_2$, say $\bm\ne\bn_1$.
Since $\de_1\ge \de$, this implies:
\bees
\bnu-\bm=\bnu-\bn_1-(\bm-\bn_1)\in\bigl(\CB(\de)-(\bm-\bn_1)\bigr)\subset
\bigcup_{\bn\ne 0}\bigl(\CB(\de_1)-\bn\bigr).
\enes
This finishes the proof of the lemma.
\enp
This lemma shows that we can re-write \eqref{cover1} as
\bee\label{cover2}
\CB(\de)
\subset\bigcup_{\bn\ne 0}\bigl(\CB(\de_1)-\bn\bigr)\bigcup
\bigcup_{\bn\ne 0}\bigl(\CD_1(\de_1)-\bn\bigr).
\ene
This, obviously, implies
\bee\label{cover3}
\bes
\CB(\de)
=&\bigcup_{\bn\ne 0}\Bigl(\bigl(\CB(\de_1)-\bn\bigr)\cap\CB(\de)\Bigr)\\
\bigcup&
\bigcup_{\bn\ne 0}\Bigl(\bigl(\CD_1(\de_1)-\bn\bigr)\cap\CB(\de)\Bigr).
\end{split}
\ene
Now let us compare volumes of the sets in both sides of \eqref{cover3}. The volume of the
LHS we already know from \eqref{volumeCB}: it is $\gg\rho^{d-2}\de$.
The definition of the set $\CD_1$ implies that
\bee\label{volumecover1}
\bes
&\volume\Bigl(\bigcup_{\bn}\Bigl(\bigl(\CD_1(\de_1)-\bn\bigr)
\cap\CB(\de)\Bigr)\Bigr)\\&\le \volume(\CD_1(\de_1))
\ll  \rho^{(3d-7)/3}\de_1\ll\rho^{(3d-7)/3}(\rho^{-d}+\de).
\end{split}
\ene
Finally, lemma \ref{volumeCB1}, inequality $\de<\de_1$ and the fact that the union in \eqref{cover3} consists of no more than $C\rho^d$ terms imply
\bee\label{volumecover2}
\bes
&\volume\Bigl(\bigcup_{\bn}\Bigl(\bigl(\CB(\de_1)-\bn\bigr)
\cap\CB(\de)\Bigr)\Bigr)\ll\rho^d(\de_1^2\rho^{d-3}+\de_1\rho^{-d})\\
&\ll \rho^d\bigl((\rho^{-d}+\de)^2\rho^{d-3}+
(\rho^{-d}+\de)\rho^{-d}\bigr)\ll \de^2\rho^{2d-3}+\de\rho^{d-3}+\rho^{-3}.
\end{split}
\ene
Putting all these inequalities together, we get
\bee\label{volumecover3}
\rho^{d-2}\de<C_6(\de^2\rho^{2d-3}+\de\rho^{(3d-7)/3}+\rho^{-7/3}).
\ene
It is time to recall that $\de=c_4\rho^{1-d}$. Plugging this into
\eqref{volumecover3}, we obtain
\bee\label{volumecover4}
c_4\rho^{-1}<C_6(c_4^2\rho^{-1}+c_4\rho^{-4/3}+\rho^{-7/3}).
\ene
Now, if we choose $c_4$ to be small enough (i.e. $c_4<C_6^{-1}$), the inequality
\eqref{volumecover4} will not be satisfied for sufficiently large $\rho$. Thus, our assumption that
function $f$ is discontinuous on every interval $I_{\boldeta}\subset\CB(\de)$
leads to a contradiction
(provided we have chosen small enough $c_4$). Therefore, there is an interval
$I_{\boldeta}\subset\CB(\de)$
on which $f$ is continuous. Since the value of $f$ on one end of this interval is
$\le \rho^2-c_4\rho^{1-d}$, and the value on the other end is $\ge \rho^2+c_4\rho^{1-d}$,
the point $\rho^2$ must be in the range of $f$. The first part of the theorem is proved.
In order to prove the second part of the theorem, we notice that the interval $I_{\boldeta}$
which we found satisfies the following condition: for each point $\bxi\in I_{\boldeta}$ and
each non-zero integer vector $\bn$ such that $\bxi+\bn\in\CA_1$ we have
$|g(\bxi+\bn)-g(\bxi)|>2\rho^{-N}$. This implies $f(\bxi+\bn)-f(\bxi)\ne 0$. Therefore,
$f(\bxi)$ is a simple eigenvalue of $H(\{\bxi\})$ for each $\bxi\in I_{\boldeta}$.
This implies that the interval
$[\rho^2-c_4\rho^{1-d},\rho^2+c_4\rho^{1-d}]$ is inside the spectral band. The theorem
is proved.
\enp

\bibliographystyle{amsplain}

\begin{thebibliography}{10}

\bibitem{Cas} J.W.S.Cassels \emph{An introduction to the geometry
of numbers}, Springer-Verlag, Berlin, 1959.

\bibitem{DahTru} B.E.J. Dahlberg, E. Trubowitz,
\emph{A remark on two dimensional periodic potentials}, Comment.
Math. Helvetici \textbf{57} (1982), 130--134.

\bibitem{HelMoh} B. Helffer, A. Mohamed,
\emph{Asymptotics of the density of states for the Schr\"odinger
operator with periodic electric potential}, Duke Math. J.
\textbf{92} (1998), 1--60.

\bibitem{Kar0} Y. E. Karpeshina, \emph{Perturbation series for the Schrödinger operator
with a periodic potential near planes of diffraction},
Comm. Anal. Geom.  \textbf{4}  (1996),  no. 3, 339--413.

\bibitem{Kar}Y. E. Karpeshina,
\emph{Perturbation theory for the Schr\"odinger operator with a
periodic potential}, Lecture Notes in Math. vol 1663, Springer
Berlin 1997.

\bibitem{Kat} T. Kato \emph{Perturbation theory for linear operators},
Springer-Verlag, Berlin, 1980.

\bibitem{PS1} L. Parnovski, A. V. Sobolev,
\emph{Bethe-Sommerfeld conjecture for polyharmonic
operators}, Duke Math. J., 2001.

\bibitem{PS2} L. Parnovski, A. V. Sobolev,
\emph{Perturbation theory and the Bethe-Sommerfeld conjecture},
Annals H. Poincare, 2001.

\bibitem{PS}  V.N. Popov, M. Skriganov,
\emph{A remark on the spectral structure of
the two dimensional Schr\"odinger operator with a periodic potential},
Zap. Nauchn. Sem. LOMI AN SSSR
\textbf{109} (1981), 131--133(Russian).

\bibitem{RS} M. Reed, B. Simon, \emph{Methods of modern
mathematical physics, IV}, Academic Press, New York, 1975.

\bibitem{Skr0} M. Skriganov, \emph{Proof of the Bethe-Sommerfeld conjecture in
dimension two}, Soviet Math. Dokl. \textbf{20} (1979), 1, 89--90.

\bibitem{Skr1} M. Skriganov,
\emph{Geometrical and arithmetical methods in the spectral theory
of the multi-dimensional periodic operators}, Proc. Steklov Math.
Inst. Vol. 171,   1984.

\bibitem{Skr2} M.Skriganov,
\emph{The spectrum band structure of the three-dimensional
Schr\"odinger operator with periodic potential},  Inv. Math.
\textbf{80} (1985), 107--121.

\bibitem{Vel0} O.A.Veliev,
\emph{Asymptotic formulas for the eigenvalues of the periodic Schrödinger operator
and the Bethe-Sommerfeld conjecture},
Functional Anal. Appl. \textbf{21} (1987), no. 2, 87--100.

\bibitem{Vel} O.A.Veliev, \emph{On the spectrum of multidimensional
periodic operators}, Theory of Functions, functional analysis and their applications,
Kharkov University, \textbf{49}, (1988), 17--34  (in Russian).

\bibitem{Vel1} O.A.Veliev, \emph{Perturbation theory for the periodic multidimensional Schr\"odinger
operator and the Bethe-Sommerfeld Conjecture}, Int. J. Contemp. Math. Sci., \textbf{2} (2007), no.2,
19--87.





\end{thebibliography}

\providecommand{\bysame} {\leavevmode\hbox
to3em{\hrulefill}\thinspace}

\end{document}